\documentclass{amsart}

\usepackage{booktabs}
\usepackage{array}
\usepackage{url}
\usepackage[colorlinks=true,linkcolor=blue,citecolor=blue,urlcolor=blue]{hyperref}
\usepackage{pgfplots}
\usetikzlibrary{arrows.meta,decorations.pathreplacing,shapes.geometric}
\pgfplotsset{compat=1.16}

\newtheorem{theorem}{Theorem}[section]
\newtheorem{lemma}[theorem]{Lemma}
\newtheorem{proposition}[theorem]{Proposition}
\newtheorem{corollary}[theorem]{Corollary}

\theoremstyle{definition}
\newtheorem{definition}[theorem]{Definition}

\theoremstyle{remark}
\newtheorem{remark}[theorem]{Remark}

\numberwithin{equation}{section}

\DeclareMathOperator{\tr}{tr}
\DeclareMathOperator{\sinc}{sinc}
\DeclareMathOperator{\spn}{spread}

\DeclareUnicodeCharacter{2200}{\ensuremath{\forall}}
\DeclareUnicodeCharacter{2203}{\ensuremath{\exists}}
\DeclareUnicodeCharacter{03B5}{\ensuremath{\varepsilon}}
\DeclareUnicodeCharacter{2080}{\ensuremath{{}_0}}
\DeclareUnicodeCharacter{211D}{\ensuremath{\mathbb R}}
\DeclareUnicodeCharacter{2265}{\ensuremath{\ge}}
\DeclareUnicodeCharacter{2264}{\ensuremath{\le}}
\DeclareUnicodeCharacter{2192}{\ensuremath{\to}}

\begin{document}

\title[Distinct and simple zeros of the Riemann zeta function]{More than 83.9\% of the zeros of the Riemann zeta function are distinct and more than 67.35\% are simple and on the critical line}

\author[K. M. Knausg{\aa}rd]{Kristian Muri Knausg{\aa}rd}
\address{Kristiansand, Norway}
\email{kristianmk@ieee.org}

\subjclass[2020]{Primary 11M26; Secondary 11M06, 11Y35, 68V20, 93D30}

\keywords{Riemann zeta function, distinct zeros, simple zeros, critical line, pair correlation, computer-assisted proof, formal verification, dissipative systems}

\begin{abstract}
Let $N(T)$ count the nontrivial zeros of the Riemann zeta function up to
height $T$ with multiplicity, $N_d(T)$ the distinct ones, and $N_0^s(T)$ those
that are simple and on the critical line. We prove $\liminf
N_d(T)/N(T)\ge1645064/1960733=0.83900\ldots$; earlier work proves
$0.83699\ldots$, and a report we have not verified claims $0.83805\ldots$.
Hence more than $67.80\%$ are simple. Also
$\liminf N_0^s(T)/N(T)\ge0.67353\ldots$; earlier work proves
$0.67250\ldots$, and reports we have not verified claim $0.673492$. Two estimates of Lamzouri are improved: at least $88.93\%$ of the
zeros are simple or on the critical line, and the proportions of simple zeros
and of zeros on the line average at least $83.67\%$. By the unconditional form
of Montgomery's pair-correlation theorem the energy, a sum of a test function
over pairs of zeros, is known asymptotically. A zero of multiplicity $d$
contributes $d^2$ to it and nearby zeros contribute too, so a lower bound for
such pairs leaves less energy for multiple zeros. The proof has three
steps. First, for zeros at least a fixed fraction of the mean
spacing apart, a large sieve inequality lets their pairs be used in full, with multiplicities. Second, their
contribution is bounded below by a computer-assisted inequality for seven or
eight consecutive zeros which distinguishes simple and double zeros. Its correction terms telescope,
as increments of a storage function. Third, the test function is chosen to
make this contribution large, at the cost of more energy. We also prove upper
limits for what such inequalities can give with the two main test functions.
The lower bounds and these limits are proved in Lean~4, analytic inputs
included. The proofs use three axioms beyond those of Mathlib, one per
inequality. Each records that a search program returned true;
Lean's kernel does not check the run itself. This work is an experiment in
AI-assisted mathematical research.
\end{abstract}

\maketitle

\section{Introduction}

Let $N(T)$ count the nontrivial zeros $\rho=\sigma+i\gamma$ of the Riemann zeta
function with $0<\gamma\le T$, with multiplicity, and let $N_d(T)$ count the
distinct ones. The simple zeros conjecture states that every nontrivial zero
of the zeta function is simple, so that $N_d(T)=N(T)$ for all $T$. It is not
known to follow from the Riemann hypothesis, and it is not known to imply it.
Under the Riemann hypothesis, Montgomery's pair correlation conjecture implies
that almost all zeros are simple~\cite{Mon73}. For $0<\gamma\le3\cdot10^{12}$
the Riemann hypothesis has been verified by counting sign changes on the
critical line~\cite{PT21}; the number of sign changes found equals $N(T)$,
which counts with multiplicity, and it follows that these zeros are simple.

Under the Riemann hypothesis more is known: at least $\frac{19}{27}$ of the
zeros are simple~\cite{BHB13} and at least $0.8477$ of them are
distinct~\cite{CGdL}. The bounds of this paper are unconditional: they do
not assume the Riemann hypothesis or any other unproved statement.

Unconditionally, Farmer~\cite{Far95} proved that more than $63.9\%$ of the
zeros are distinct and Ki and Lee~\cite[Theorem~5]{KiLee12} that more than
$70\%$ are; see also Wu~\cite{Wu12}. An argument discovered by
Claude (Anthropic)~\cite{Claude26} and verified by Alp\"oge and
Furman~\cite{AF}, together with a second proof by Lamzouri~\cite{Lam}, gave
$\liminf N_d(T)/N(T)\ge0.83625\ldots$, and Wang~\cite{Wang} improved the
constant by about $3\cdot10^{-8}$. Chandru~\cite{Chandru} obtained $0.83652$
with a computer-assisted inequality for seven points, in a preprint that has
not been refereed; the reports~\cite{Schaefer,Singh} state $0.83671$ and
$0.83672$. The first paper in this series~\cite{K1} proved
$0.8369928\ldots$. Santiba\~nez-Leal~\cite{SantLeal} applied its argument to
stronger local inequalities and states $0.83737\ldots$. A recent report of
D'Souza~\cite{DSouza} states $0.83757\ldots$, with a formal proof in Lean, and
$0.83805\ldots$, where one finite inequality is checked outside Lean. We have
not verified these reports.

\begin{theorem}\label{thm:main}
$\displaystyle\liminf_{T\to\infty}\frac{N_d(T)}{N(T)}\ \ge\
q:=\frac{1645064}{1960733}=0.83900459675\ldots$
\end{theorem}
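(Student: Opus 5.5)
The plan is to run the pair-correlation energy argument sketched in the abstract. Fix an even, nonnegative test function $w$ with $\widehat w$ supported in $[-1,1]$; the Fejér kernel is the first choice, and the sharper kernel of~\cite{K1} the second. Define the energy
\[
E_w(T)=\sum_{0<\gamma,\gamma'\le T} w\Bigl((\gamma-\gamma')\tfrac{\log T}{2\pi}\Bigr),
\]
where the sum runs over pairs of zeros counted with multiplicity. The unconditional form of Montgomery's theorem (Goldston--Gonek--\"Ozl\"uk--Snyder type, valid for $|\alpha|<1$ without RH) gives $E_w(T)\sim C_w N(T)$ with an explicit constant $C_w$. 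Write the distinct zeros as $\rho_j$ with multiplicities $m_j$. The diagonal contributes $\sum m_j^2 w(0)$, and every term is nonnegative. Since $\sum m_j^2\ge \sum(2m_j-1)$ exactly when $m_j\ge1$, with a gain at multiple zeros, the inequality $\sum m_j^2\le (C_w-\text{off-diagonal})N/w(0)$ gives, via $\sum(m_j^2-2m_j+1)\ge0$ and the relation $N_d=\sum 1$, a lower bound $N_d\ge N(2-\kappa)$ with $\kappa$ the available diagonal budget. The whole game is to lower-bound the off-diagonal pair contribution from distinct nearby zeros.

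The first step is to separate scales. Pairs of zeros at normalized distance at least a fixed $\delta$ are controlled by a large sieve inequality (Montgomery--Vaughan/Selberg type), so their contribution may be counted in full, with multiplicities $m_jm_k$. Close pairs, at distance less than $\delta$, are simply discarded as nonnegative, or bounded below by $0$. Averaging then reduces the problem to a local statement: for any configuration of seven or eight consecutive distinct zeros $x_1<\dots<x_8$ with multiplicities $m_i\in\{1,2,\ge3\}$, a weighted quantity
\[
\sum_{i\ne k} m_im_k\,w(x_i-x_k)\,\mathbf 1_{|x_i-x_k|\ge\delta}+\lambda\,(m_1^2-\ldots)\ \ge\ a\,m_1+b\,\mathbf 1_{m_1=1}+S(\text{state}_{2})-S(\text{state}_{1})
\]
holds, where $S$ is a storage function of the window. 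Summing over the zeros makes the $S$-increments telescope, leaving boundary errors of size $o(N)$. Combined with the density $\sum m_j=N$ and the asymptotic for $E_w$, this yields a linear inequality in $N_d/N$ whose solution is the rational $q=1645064/1960733$, after the constants in $w$, $\delta$, $a$, $b$ and $S$ are chosen as rationals to optimize it.

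The main obstacle is the finite local inequality. It is a statement about all real configurations of eight points and all multiplicity patterns, and it has to be proved with rigorous interval arithmetic and branch-and-bound over a compact region, after reducing unbounded gaps to the tail where $w$ is monotone and small. Multiplicities of at least $3$ are handled by showing that they only help, since the left side grows quadratically. My first attempt is to guess $S$ as a piecewise-linear function of the last gap, fit by linear programming on sampled configurations. I would then certify the inequality by subdividing the gap space, using Lipschitz bounds for $w$ and exact rational evaluation at corners, and recording the certificate so it can be checked, as the abstract indicates, by a search program whose result enters as an axiom in Lean.
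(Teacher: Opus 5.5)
Your frames, storage function, telescoping and branch-and-bound are what the paper does. But the sketch is in effect an argument under the Riemann hypothesis, and the step that makes the theorem unconditional is missing.

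\textbf{The unconditional energy.} The energy you write, a sum of $w((\gamma-\gamma')\log T/2\pi)$ over ordinates with every term nonnegative, is not what the unconditional Montgomery theorem controls. \cite[Lemma~5]{BGST} sums over all zeros $\rho,\rho'$ a function of the complex difference $\rho-\rho'$. In the paper this is $\tr A^2=\sum_{z,w}\nu_z\nu_wK_\varepsilon(z-w)^2$ with $z=i(\rho-\frac12)\log T/(2\pi)$. The terms involving zeros off the line need not be real or nonnegative, so ``diagonal $\le$ total'' is not available.

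The missing idea is to write the energy as $\tr A^2$ for a Hermitian $A=P+Q$. Here $P\succeq0$ comes from the zeros on the line of multiplicity one and two, and $Q$ comes from the other zeros and has at most $h+k$ positive eigenvalues. One then applies Lemma~\ref{lem:threshold}: $\tr A^2\ge2c\tr A-2c\tr P+\tr\varphi_c(P)-c^2p$. Its price is the clipping $\varphi_c$ on the Gram part, and that is the only reason separation enters. On a $\theta$-separated set the Gram matrix is $\preceq\Lambda I$ with $2\Lambda<c$ (Lemma~\ref{lem:maj}), so nothing is clipped and the pair terms keep the weights $d_id_j$. In your setting the large sieve has no job, since every pair already counts in full.

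Conversely, once there is clipping you cannot keep close points in the frames and just drop their mutual terms: two nearby double points already give an eigenvalue near $4>c$. The paper instead splits the points (Lemma~\ref{lem:pinch}) into two parts:
\begin{itemize}
\item a maximal family of close pairs, which leave the frames and pay their own rewards through $K^2\ge\beta$ on $[-\theta,\theta]$, with $2+2\sqrt\beta<c$ and $2dd'\beta\ge x_d+x_{d'}$;
\item a separated rest, on which the frames and the storage function are used.
\end{itemize}
The level $c$ is then confined between $2\Lambda=3.3421$ and $2+\sqrt{2-2x_1}=3.3752$, the latter from the condition $r_k\ge0$ for pairs off the line.

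\textbf{The count.} Even granting the above, your count $m^2\ge2m-1$ is the tangent at $m=1$, so a double zero with energy $4$ is credited only $3$. It gives $N_d\ge(2-\kappa)N$. With the paper's own certificate it yields $(2-C_0+x_2-x_1-b)/(1-2x_1+x_2)=645064/960733\approx0.671$ instead of $q$. For distinct zeros the count must be $(m-1)(m-2)\ge0$, that is $m^2\ge3m-2$, sharp at $m=1$ and $m=2$. This gives $(3-C(f))/2$ and the formula of Theorem~\ref{thm:general}.

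\textbf{The test functions.} Neither of your two choices can reach $q$ with any local inequality.
\begin{itemize}
\item The Fej\'er kernel is $K^2$ for the flat window. There $K=\sinc$ vanishes at the nonzero integers, so the unit lattice and the double lattice of spacing two carry no interaction, and nothing beyond $\frac56$ can be proved.
\item For the window of~\cite{K1}, the trial state ``simple, double, simple, empty'' caps the method at $0.83843<q$ (Proposition~\ref{prop:ceiling}).
\end{itemize}
The theorem needs a window chosen for the interaction, $f(u)=1+\sum_{j\le6}a_j\cos(2\pi ju)$. It gives up $0.005$ in $C(f)$ to raise the lattice energy to $0.0205$, and it is used together with rewards that distinguish simple from double points.
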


The gain over~\cite{K1} is $0.0020$, and the larger value of~\cite{DSouza}
is exceeded by $0.0009$.

\begin{corollary}\label{cor:simple}
Let $N_s(T)$ count the simple nontrivial zeros with $0<\gamma\le T$. Then
\[
 \liminf_{T\to\infty}\frac{N_s(T)}{N(T)}\ \ge\ 2q-1
 =\frac{1329395}{1960733}=0.67800919\ldots
\]
\end{corollary}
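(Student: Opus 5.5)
The corollary follows from Theorem~\ref{thm:main} by a counting argument that compares distinct zeros with zeros counted with multiplicity. Let $m_k(T)$ denote the number of distinct nontrivial zeros with $0<\gamma\le T$ that have multiplicity exactly $k$. Then
\[
 N(T)=\sum_{k\ge1} k\,m_k(T),\qquad N_d(T)=\sum_{k\ge1} m_k(T),\qquad N_s(T)=m_1(T).
\]
The key inequality is $N(T)-N_s(T)\ge 2\bigl(N_d(T)-N_s(T)\bigr)$. This holds because each distinct multiple zero contributes at least $2$ to $N(T)-m_1(T)=\sum_{k\ge2}k\,m_k(T)$, and exactly $1$ to $N_d(T)-m_1(T)=\sum_{k\ge2}m_k(T)$. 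Rearranging gives
\[
 N_s(T)\ \ge\ 2N_d(T)-N(T).
\]

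Next I divide by $N(T)$, which is positive and tends to infinity, to obtain $N_s(T)/N(T)\ge 2N_d(T)/N(T)-1$. Taking $\liminf$ as $T\to\infty$ preserves this inequality, since the $\liminf$ of $2x_T-1$ equals $2\liminf x_T-1$. Theorem~\ref{thm:main} then yields
\[
 \liminf_{T\to\infty}\frac{N_s(T)}{N(T)}\ \ge\ 2q-1 .
\]

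It remains to evaluate $2q-1$. We have
\[
 2q-1=\frac{3290128-1960733}{1960733}=\frac{1329395}{1960733},
\]
and its decimal expansion is $2\cdot0.83900459675\ldots-1=0.6780091935\ldots$, in agreement with the statement.

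The argument has no real obstacle: all the depth lies in Theorem~\ref{thm:main}. The only point needing care is that the inequality $k\ge2$ for multiple zeros is exactly what produces the factor $2$. Nothing about the distribution of multiplicities is used, so the bound cannot be sharpened without further information on the sizes of the multiplicities.
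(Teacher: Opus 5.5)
Your proof is correct and is the same as the paper's, which also uses $N\ge N_s+2(N_d-N_s)$, that is, $N_s\ge 2N_d-N$, and then applies Theorem~\ref{thm:main}. Your evaluation of $2q-1=\frac{1329395}{1960733}$ is also correct.
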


\begin{proof}
A zero that is not simple has multiplicity at least two, so
$N\ge N_s+2(N_d-N_s)$, that is, $N_s\ge2N_d-N$.
\end{proof}

Corollary~\ref{cor:simple} counts simple zeros anywhere in the critical strip.
For this quantity~\cite{K1} gives $0.67398$ in the same way,
and~\cite{DSouza} states $0.67515\ldots$ with a formal proof and
$0.67610\ldots$ with the finite inequality just mentioned. The zeros that are simple and on the
critical line form a smaller set. Before 2026 the unconditional bounds for
zeros on the critical line came from the method of
Levinson~\cite{Lev74,Con89}. The last of them is slightly more than
$\frac5{12}$, and $0.4075$ for zeros that are simple and on the
line~\cite{PRZZ}. For the latter, \cite{AF,Lam} proved $0.67250\ldots$,
and~\cite{Wang} added about $7\cdot10^{-8}$. Larger values, obtained along
the same lines, are stated in work that has not been refereed: $0.67300$~\cite{Ainta},
$0.67305$~\cite{Chandru}, $0.67319$~\cite{Tawan}, $0.67331$~\cite{Trmdy,Shi},
$0.67337$~\cite{DSouza}, $0.673399$~\cite{Dev}, $0.6734$~\cite{ZTian},
$0.67341$~\cite{AMTOPA}, $0.673425$~\cite{Schaefer}, $0.673451$~\cite{Singh},
$0.673476$ to $0.673477$~\cite{GebRe,GebCap,Geb,Rakitin} and
$0.673492$~\cite{Freund}, the last in a draft pull request to a public git
repository; the survey in~\cite[Section~1.4]{DSouza} names
further ones. The values of~\cite{ZTian} are stated there as complete formal
proofs in Lean. A public list of formal proofs, which admits the three
standard axioms of Lean only, has entries up to $0.67348$~\cite{Dani,Lavery};
they are stated for the number of distinct zeros on the critical line, which
is at least $N_0^s(T)$, and the proof in~\cite{Lavery} obtains the bound for
$N_0^s(T)$ first. We have not verified these reports, and no result of this
paper depends on them: they are cited for comparison and for the origin of
ideas. All of the values are
smaller than the bound of Theorem~\ref{thm:line} below, which holds for that
number as well. A report~\cite{YY} claims much larger values,
$0.7962$ for this quantity and $0.8981$ for distinct zeros, from trace moments
of order up to six. Its authors grade it as a candidate whose analytic part is
not yet verified. In~\cite[Section~7.2]{AF} bounds of this kind, such as
$\frac{13}{18}$ from the fourth moment, are derived from a hypothesis on these
moments. We do not count this report among the values above. Neither Theorem~\ref{thm:main} nor Corollary~\ref{cor:simple} says
anything about the critical line: a zero off the line is distinct, and it may
be simple. For the critical line the method is run with the clipping level
$c=2$ in place of $c=\frac{67}{20}$ and with a second window, and then it
gives the following.

\begin{theorem}\label{thm:line}
Let $N_0^s(T)$ count the zeros $\rho=\frac12+i\gamma$ with $0<\gamma\le T$ that
are simple. Then
\[
 \liminf_{T\to\infty}\frac{N_0^s(T)}{N(T)}\ \ge\ s_0:=\frac{1669159}{2478195}=0.67353820\ldots
\]
\end{theorem}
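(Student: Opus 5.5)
The plan follows the pair-correlation energy method behind Theorem~\ref{thm:main}, adapted to zeros on the critical line. The known input is the unconditional form of Montgomery's theorem. For an even test function $w$ whose Fourier transform $\hat w$ is supported in $[-1,1]$, it gives the energy
\[
 E_w(T)=\sum_{0<\gamma,\gamma'\le T} w\Bigl((\gamma-\gamma')\tfrac{\log T}{2\pi}\Bigr)
 \sim \Bigl(w(0)+\int_{-\infty}^{\infty} w(x)\Bigl(1-\bigl(\tfrac{\sin\pi x}{\pi x}\bigr)^2\Bigr)dx\Bigr) N(T),
\]
with pairs counted with multiplicity. No Riemann hypothesis is needed, because the sum runs over ordinates. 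The critical line enters through the Levinson--Conrey input: a positive proportion of zeros are simple and on the line. Of more use here, the proportion $\kappa$ of zeros off the line is controlled, since such zeros come in pairs $\rho,1-\bar\rho$ with the same ordinate. A zero off the line therefore contributes like a double zero to $E_w$: it contributes at least $2$ per zero, like a multiplicity-two cluster. This is why the clipping level changes from $\tfrac{67}{20}$ to $c=2$. One clips the per-cluster contribution at $2$, so a simple zero on the line is the only configuration of local energy $1$, and everything else, whether multiple or off the line, costs at least $2$ per zero.

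The steps, in order, are as follows.

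\textbf{First,} partition the ordinates (with multiplicity, counting $\rho$ and $1-\bar\rho$ as coincident ordinates) into clusters separated by gaps at least $\delta$ times the mean spacing. Apply the large sieve inequality, as in the proof of Theorem~\ref{thm:main}, so that cross pairs between well-separated zeros are used in full.

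\textbf{Second,} for each window of seven or eight consecutive ordinates, apply the computer-verified local inequality. It bounds the clipped diagonal-plus-near-neighbour contribution from below by $1+\lambda\cdot\mathbf 1[\text{not (simple and on line)}]$, up to a correction term $V(\text{window}_{k+1})-V(\text{window}_k)$, where $V$ is the storage function. Summing over $k$, the corrections telescope to $O(1)$, which gives
\[
 E_w(T)\ \ge\ N(T)+\lambda\bigl(N(T)-N_0^s(T)\bigr)+(\text{far-pair contribution})-o(N(T)).
\]

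\textbf{Third,} use a second window, a second test function $w_2$, and combine the two resulting inequalities linearly. Optimising the combination over the rational parameters produces
\[
 \frac{N_0^s}{N}\ge 1-\frac{E_w/N-1-\text{far}}{\lambda}=s_0 .
\]
The exact rational $\frac{1669159}{2478195}$ comes from evaluating the Montgomery integral for the chosen piecewise-polynomial $\hat w$, in closed form with rational coefficients.

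The main obstacle is the second step. One needs a finite inequality on $7$ or $8$ points, uniform over all real configurations and multiplicities, in which off-line pairs are treated as multiplicity two. It must come with a storage function that makes the error terms telescope. I would first try to reuse the storage function from the distinct-zeros case, with $c$ replaced by $2$. I would then have the search program certify the inequality on a compact grid of normalized gaps, using interval arithmetic and monotonicity beyond a cutoff gap. The delicate region is near-coincident simple zeros, which must not be confused with a double zero; the clipping at exactly $2$ must be arranged so that this boundary is handled correctly.
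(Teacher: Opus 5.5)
Your outline has the right ingredients: the energy from pair correlation, the level $c=2$, a large sieve step, and a local inequality whose corrections telescope. But the treatment of the zeros that are not simple and on the line, which is the core of the argument, does not work as stated. The unconditional Montgomery theorem (Proposition~\ref{prop:energy}) is not a sum over ordinates. It runs over complex $\rho,\rho'$ with the weight $4/(4-(\rho-\rho')^2)$, so a zero off the line becomes a non-real point $z_\rho$. Its cross terms $K_\varepsilon(z_\rho-w)^2$ with the other zeros are in general complex rather than nonnegative. So an energy lower bound cannot simply treat $\rho$ and $1-\bar\rho$ as a double zero at one ordinate.

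The missing idea is the splitting $A=P+Q$ of the operator, with $P$ built from the simple zeros on the line and $Q$ from everything else. A zero on the line of multiplicity $j\ge2$ gives a positive rank-one term of trace $j$. An off-line pair gives $\nu(vw^*+wv^*)$, which has at most one positive eigenvalue and trace $2\nu$. Hence $Q$ has at most $p\le(N-n)/2$ positive eigenvalues, and Lemma~\ref{lem:threshold} with $c=2$ gives $\tr A^2\ge2(N-n)+\tr\varphi_2(U_\varepsilon)$. This is where the credit of $2$ per unit comes from. The level $c=2$ is forced because an off-line pair or a double zero needs $4c-c^2\ge4$, and the clipping acts on eigenvalues of $P$, not on clusters.

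This also shows why your labelled local inequality is aimed at the wrong object. Off-line pairs cannot enter the Gram part at all, since their terms are indefinite and the lemma needs $P\succeq0$. A double zero with any overlapping neighbour already produces an eigenvalue above $2$, so the identity $\tr\varphi_2(G_{SS})=\tr G_{SS}^2$ on separated sets is lost. Only the simple zeros on the line interact. What is needed is a certificate $\tr\varphi_2(U_\varepsilon)-n\ge x\,n-b\,\spn(Y)-\omega(\varepsilon)n-C_2$, which yields $(1-x)n\ge(2-C(f)-b)N-o(N)$. With $\lambda=1$ and the interaction written as $x\,n-b\,\spn(Y)$, your final formula reduces to this one.

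The other steps need correcting too. Grouping the points into clusters separated by large gaps does not make the points inside a cluster separated, so Lemma~\ref{lem:maj} does not apply to them. Instead, take a maximal family of disjoint pairs at distance less than $\theta$ and use Lemma~\ref{lem:pinch}. The unmatched points are $\theta$-separated, and a majorant with $\Lambda<2$ (here $\Lambda=1.789\ldots$) shows that $\varphi_2$ is the square on them. A close pair of simple zeros has eigenvalues $1\pm u\le2$ and contributes $2+2u^2\ge2+2\beta$, which pays its two rewards once $\beta\ge x$ (Proposition~\ref{prop:gram2}). This settles your concern about nearly coincident simple zeros, and the local inequality is then an unlabelled one for eight consecutive simple zeros.

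Your third step has no counterpart: the ``second window'' is just the ten-term window used for this theorem, and no two inequalities are combined. The constant is $(2-C'_0-b)/(1-x)$ with $x=8722\cdot10^{-6}$, $b=7\alpha=4389\cdot10^{-6}$, and $C'_0=\frac{6639737}{5000000}$ a rational upper bound for $C(f)$. $C(f)$ itself involves $\pi^{-2}$ and is not rational. Without this window, its majorant and the certified eight-point data, nothing in your plan reaches $s_0$. The Levinson--Conrey input plays no role.
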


The gain over the bound $0.67250$ of~\cite{AF,Lam} is $0.00103$, and the gain over the
largest of the values above is $4.5\cdot10^{-5}$.

Lamzouri~\cite{Lam} bounded two further quantities by the same method. Let
$N_0(T)$ count the zeros on the critical line with $0<\gamma\le T$, with
multiplicity, and let $N_{s\cup0}(T)$ count, with multiplicity, the zeros with
$0<\gamma\le T$ that are simple or on the critical line.
By~\cite[Theorem~1.1]{Lam}, the average of the proportion of simple zeros and
the proportion of zeros on the line is at least $0.83625\ldots$, and the
proportion of zeros that are simple or on the line is at least
$0.88762\ldots$. For the second quantity Chandru~\cite[(1.10)]{Chandru} states
$0.88855$, from a seven-point inequality for simple points, in the preprint
mentioned above, and~\cite{DSouza} states $0.88791$. The certificates behind Theorems~\ref{thm:main}
and~\ref{thm:line} give more for both quantities.

\begin{theorem}\label{thm:further}
With $s_0$ as in Theorem~\ref{thm:line},
\begin{align*}
 \liminf_{T\to\infty}\frac{N_s(T)+N_0(T)}{2N(T)}&\ \ge\ \frac{1+s_0}2
 =\frac{2073677}{2478195}=0.83676910\ldots,\\
 \liminf_{T\to\infty}\frac{N_{s\cup0}(T)}{N(T)}&\ \ge\ \frac{3376433}{3797325}=0.88916092\ldots
\end{align*}
\end{theorem}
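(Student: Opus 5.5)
The first inequality follows from Theorem~\ref{thm:line} alone, by counting. Every simple zero on the critical line is counted both in $N_s(T)$ and in $N_0(T)$, so $N_s(T)\ge N_0^s(T)$ and $N_0(T)\ge N_0^s(T)$. Using only the trivial bound $N_0(T)\ge N_0^s(T)$ would give just $s_0$, so this alone is not enough; I would refine the count as follows. Every zero either lies on the line or off it, and every zero is either simple or multiple. Write $M(T)$ for the number of zeros, with multiplicity, that are multiple or off the line. Then $N(T)-N_0^s(T)=M(T)$, and $N_s(T)+N_0(T)\ge 2N(T)-2M(T)+\ldots$ is the wrong direction, so I would work instead with the quantity the certificate actually bounds. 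The method of Theorem~\ref{thm:line} runs the energy argument with clipping level $c=2$. I expect that what it really controls is a weighted count in which each zero of multiplicity $d$ on the line costs $d-1$ (double zeros distinguished), and each zero off the line costs through its functional-equation partner $1-\bar\rho$, which doubles it in the pair sum. Concretely, the certificate should show that the total defect $D(T)=\sum(\text{penalty})$ satisfies $D(T)\le(1-s_0)N(T)(1+o(1))$. The same defect then dominates both $N(T)-N_s(T)$ and $N(T)-N_0(T)$ up to a factor: a multiple zero reduces $N_s$ but not $N_0$, and an off-line simple zero reduces $N_0$ but not $N_s$. Hence $2N-N_s-N_0\le 2(N-N_0^s)$ already, and this is the precise point: I would check that the certificate's defect bound applies to $(2N-N_s-N_0)/2$ directly. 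That reduction gives $(N_s+N_0)/(2N)\ge 1-(1-s_0)/2=(1+s_0)/2$.

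For the second inequality I would rerun the certificate machinery, reusing the same test function and local seven/eight-point inequality, but with a changed objective. Here a zero counts as bad only if it is both multiple and off the line. Off-line multiple zeros come in symmetric pairs $\rho,1-\bar\rho$ with the same ordinate, so each such pair forms a cluster of multiplicity at least $2d\ge4$ at one height. This gives a much larger energy contribution $(2d)^2$ per $2d$ zeros, which is why the constant $3376433/3797325$ beats $q$. The steps are as follows. First, write the energy identity from Montgomery's unconditional pair correlation. Second, apply the large sieve step to well-separated zeros. Third, bound from below the local contribution with the storage-function inequality, in which the telescoping terms cancel. Fourth, optimise the linear program so that the point cost is $0$ for simple zeros or zeros on the line and $1$ per zero in a multiple off-line cluster.

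The main obstacle is the second part. The local inequality has to be re-certified, with its Lean axiom, for the new weighting, and one must confirm that the rational constant comes out exactly as the ratio stated.
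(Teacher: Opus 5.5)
The first bound is not proved, and for the second the step that produces the constant is missing.

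\textbf{First bound.} The inequality you actually prove, $2N-N_s-N_0\le2(N-N_0^s)$, together with Theorem~\ref{thm:line} gives only $(N_s+N_0)/(2N)\ge s_0$. Your ``reduction'' also gives only $s_0$: a bound $(2N-N_s-N_0)/2\le(1-s_0)N$ means $N_s+N_0\ge2s_0N$. To reach $(1+s_0)/2$ you need $2N-N_s-N_0\le(1-s_0+o(1))N$ itself.

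This cannot come from Theorem~\ref{thm:line} used as a black box. Take $s_0N$ simple zeros on the line and all other zeros in pairs of double zeros off the line: this satisfies its conclusion, yet $N_s+N_0=2s_0N$.

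The missing step is a new count at level $2$, starting from the first inequality in~\eqref{eq:key2}, $\tr A^2\ge4(N-n)-4p+\tr\varphi_2(U_\varepsilon)$. Set up the bookkeeping as follows.
\begin{itemize}
\item Let $r$ be the number of multiple zeros on the line, with total multiplicity $D\ge2r$.
\item Let $k_1$ be the number of pairs off the line of multiplicity one.
\item Let $k_2$ be the number of remaining pairs off the line, with total multiplicity $M_2\ge2k_2$.
\end{itemize}
Then $p=r+k_1+k_2$, so $4p\le2D+4k_1+2M_2$. Also $N=n+D+2k_1+2M_2$, $N_s=n+2k_1$ and $N_0=n+D$. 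Insert the certificate and the energy, and eliminate $M_2$. This gives $N_s+N_0=2n+D+2k_1\ge(3-C(f)-b)N+x\,n-o(N)$. Only now does $n\ge(s-o(1))N$ enter, and the identity $3-C(f)-b+xs=1+s$ finishes the proof. Your ``defect'' is never defined, and this inequality, which is the whole content of the first bound, is only asserted.

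\textbf{Second bound.} No new local inequality is needed, so the obstacle you name is not one. A certificate concerns only the Gram part of the simple and double zeros on the line and does not depend on which count is proved. What changes is the credit given to the remainder $Q$, and your plan leaves exactly this unspecified.

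The paper keeps the first window's certificate $(x_1,x_2,b)=(0.054350,0.069433,0.037314)$ at $c=\frac{67}{20}$. It prices $Q$ at a second level $t=\frac{27}8\ge c$ (Lemma~\ref{lem:threshold2}): a term with one positive eigenvalue and trace $u$ is credited $2tu-t^2$. The conditions $2-2x_1+x_2\ge0$ and $4t-t^2=2.109375\ge2+2x_1$ then give the following.
\begin{itemize}
\item Simple zeros on the line, double zeros on the line, zeros on the line of multiplicity at least three, and simple pairs off the line each receive at least $1+x_1$ per unit of multiplicity.
\item A pair off the line of multiplicity $M\ge2$ receives $4tM-t^2\ge e_0\cdot2M$, with $e_0=2t-\frac{t^2}4=\frac{999}{256}$.
\end{itemize}
Hence $(C(f)+b)N\ge(1+x_1)(N-2M_2)+e_0\cdot2M_2-o(N)$. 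Since $N_{s\cup0}=N-2M_2$, using $C_0$ for $C(f)$ yields $(e_0-C_0-b)/(e_0-1-x_1)=\frac{3376433}{3797325}$.

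The constant depends on the choice of $t$: pricing at $t=\frac{67}{20}$ gives only about $0.88885$. Without this count your plan cannot produce the stated ratio. Re-optimising the local data for this objective, as you propose, is a different computation with a different window, and it gives the different constant of Theorem~\ref{thm:third}.
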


The first bound comes from the second window and the second bound from the
first window, and no further computation is needed
(Section~\ref{sec:further}). The first window was not made for the second
quantity. A \emph{third window}, with a seven-point inequality found for this
quantity, gives slightly more.

\begin{theorem}\label{thm:third}
\[
 \liminf_{T\to\infty}\frac{N_{s\cup0}(T)}{N(T)}\ \ge\ \frac{50652159}{56956375}=0.88931500\ldots
\]
\end{theorem}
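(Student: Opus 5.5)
The proof follows the same three-step scheme as Theorems~\ref{thm:main} and~\ref{thm:line}, with the local inequality adapted to the quantity $N_{s\cup0}$. The starting point is the unconditional form of Montgomery's pair-correlation theorem with a fixed nonnegative test function $w$ whose Fourier transform is supported in $[-1,1]$ (a Fej\'er-type or Selberg-type window, as in the first window). It gives the energy
\[
 E(T)=\sum_{0<\gamma,\gamma'\le T} w\bigl((\gamma-\gamma')\tfrac{\log T}{2\pi}\bigr)\sim A\,N(T)
\]
for an explicit constant $A$ depending on $w$. Here the sum runs over pairs of zeros counted with multiplicity, so a zero of multiplicity $d$ contributes $d^2w(0)$ from its diagonal, plus its interaction with neighbours. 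Horizontal displacement of zeros off the line must also be handled. I would use the same device as in~\cite{Lam,AF}: a zero off the line comes with a reflected partner $1-\bar\rho$ at the same height. This effectively doubles its contribution, so a zero off the line behaves in the energy like a zero of multiplicity at least two. This is exactly why the quantity $N_{s\cup0}$ arises naturally. A zero fails to be counted only if it is both multiple and off the line, and then it is a cluster of weight at least $2\cdot2=4$ at one height, or weight at least $2$ with multiplicity squared.

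Next I would run the large sieve step from the first window unchanged. For zeros separated by at least a fixed fraction of the mean spacing, pairs can be used in full with multiplicities. This reduces the lower bound for the energy to a local problem on configurations of seven consecutive ordinates. Each point carries a type: simple on the line (weight $1$, counted); multiple on the line (counted); simple off the line (counted, but paired); or multiple off the line (not counted, contribution at least $4$ per reflected pair). The third window is the seven-point inequality found for this quantity. It states that the local energy of any such configuration is at least $\alpha\cdot(\text{number of points})+\beta\cdot(\text{number of uncounted points})$, minus telescoping storage-function increments $\Phi(\text{next})-\Phi(\text{current})$. I would verify this by the computer-assisted search described earlier, with the correction terms telescoping over $0<\gamma\le T$ to an $o(N(T))$ error.

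Summing gives $A\,N(T)\ge \alpha N(T)+\beta\,(N(T)-N_{s\cup0}(T))+o(N(T))$, that is, $N_{s\cup0}/N\ge1-(A-\alpha)/\beta$. The rational constants of the certificate should give $50652159/56956375$; this final step is an exact rational computation, which I would check in Lean. The main obstacle is the seven-point inequality itself. Its case split must distinguish off-line multiple zeros from on-line multiple zeros, and this enlarges the configuration space compared with the first window. I would first try the storage function and clipping level $c=\frac{67}{20}$ from the first window, reoptimize only the weights $\beta$ and the window $w$, and enlarge the search if the certificate fails.
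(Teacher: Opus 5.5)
Your final count has the right shape. The paper ends with $(C(f)+b+o(1))N\ge(1+x)(N-2M_2)+e_0\cdot2M_2$, where $2M_2=N-N_{s\cup0}$ is the multiplicity carried by the multiple zeros off the line. The gap is in the step that is supposed to produce these coefficients.

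You want a seven-point inequality in which zeros off the line are typed points with a local pair energy, and you assert that a multiple pair off the line contributes at least $4$. This cannot be set up. Zeros off the line have no real position: in the operator $A$ they give complex points $z_\rho$ with imaginary part $(\sigma-\frac12)L/(2\pi)$. Their terms $K_\varepsilon(z-w)^2$ in $\tr A^2$ need not be real, let alone nonnegative. So they can neither be charged to frames nor dropped, and the lower bound for their contribution must come from elsewhere. (Also, $4$ for a pair of total multiplicity at least $4$ would be at most $1$ per unit. That is below the $1+x$ earned by counted zeros, so it would give no bound.)

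The missing idea is the splitting of \cite{AF,Lam} in the form of Lemma~\ref{lem:threshold2}. Write $A=P+Q$, where $P$ comes from the simple zeros on the line only. $Q$ comes from the zeros on the line of multiplicity $j\ge2$ (positive, rank one, trace $j$) and from the pairs $\{\rho,1-\bar\rho\}$ (at most one positive eigenvalue, trace $2j$). Then $\tr A^2\ge\tr\varphi_c(P)+2t(\tr A-\tr P)-t^2p$ for $t\ge c$. So each term of $Q$ with trace $u$ is credited with $2tu-t^2$, with no interaction at all.

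The paper takes $c=2$ for $P$. At that level a double point with any neighbour of nonzero overlap is clipped, so only simple zeros on the line interact. The local inequality needed is then the unlabelled one of Proposition~\ref{prop:local3}: single reward $\kappa_3=x=0.054525$, $\alpha=0.006212$, a majorant with $\Lambda<2$ rather than $\frac c2$, and $K^2\ge\beta\ge x$ on $[-\theta,\theta]$. Through Proposition~\ref{prop:gram2}, with frames of seven points, this gives the certificate $(x,b)=(0.054525,\,6\alpha)$ for the critical line.

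The count is part~(c) of Theorem~\ref{thm:further-general} with $t=\frac{27}8$:
\begin{itemize}
\item the condition $4t-t^2\ge2+2x$ makes multiple zeros on the line and simple pairs off the line earn at least $1+x$ per unit of multiplicity, as the simple zeros on the line do;
\item a pair off the line of multiplicity $j\ge2$ earns $4tj-t^2\ge2j\,e_0$, with $e_0=2t-\frac{t^2}4=\frac{999}{256}$;
\item this yields $(e_0-C''_0-b)/(e_0-1-x)=\frac{50652159}{56956375}$.
\end{itemize}

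Your plan keeps $c=\frac{67}{20}$ and labelled points and has no second level $t$. It therefore lacks the mechanism for the zeros off the line. It also misses the trade-off $x\le\frac7{128}$ that the choice of $t$ imposes, and it leaves unspecified the certificate it would search for.
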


In particular the larger of $N_s(T)$ and $N_0(T)$
is at least $(0.8367+o(1))N(T)$, and the zeros that are off the line and not
simple have proportion at most $0.1107$. The proofs of the
four theorems and of the corollary are formalised (Section~\ref{sec:lean}).

\subsection*{The idea}
Rescale the zeros so that their mean spacing is one. Fix an even probability
density $f$ on $I=[-\frac12,\frac12]$, the \emph{window}, with Fourier
transform $K(t)=\int f(u)e^{-2\pi itu}\,du$. A zero on the critical line at
rescaled height $y$ is represented by the wave packet
$\sqrt{f(u)}\,e^{2\pi iuy}$, a unit vector in $L^2(I)$, and two such vectors
have inner
product $K(y-y')$. Summing the rank-one projections over the zeros on the
line, with multiplicity, and adding a Hermitian term for each pair of zeros
off the line, gives an operator $A$ with $\tr A=N$~\cite{AF,Lam}. Two numbers are known
about $A$. The trace counts the zeros. The \emph{energy} $\tr A^2$ is given by
Montgomery's pair-correlation theorem in its unconditional
form~\cite{Mon73,BGST}: after a smoothing of $f$ that is removed at the end,
\begin{equation}\label{eq:energy-intro}
 \tr A^2=(C(f)+o(1))\,N,\qquad C(f)=\int f^2+\iint|s-t|\,f(s)f(t)\,ds\,dt .
\end{equation}
The energy is a sum over pairs of zeros. If the zeros on the line are at
$y_1<\dots<y_l$ with multiplicities $d_i\in\{1,2\}$, their part of it is
\begin{equation}\label{eq:pair-intro}
 \underbrace{\sum_i d_i^2}_{\text{repeated zeros}}\ +\
 \underbrace{\sum_{i\ne j}d_id_j\,K(y_i-y_j)^2}_{\text{different zeros}} .
\end{equation}
A zero of multiplicity $d$ contributes $d$ to the trace and $d^2$ to the first
sum, so repeated zeros cost energy, and the energy is fixed
by~\eqref{eq:energy-intro}. Keeping only the first sum gives
$\tr A^2\ge3N-2N_d$ and hence $\liminf N_d/N\ge(3-C(f))/2$. With the window
that minimises $C(f)$, due to Montgomery and Taylor~\cite{Mon75}, this is the
bound $0.83625$ of~\cite{AF,Lam}.

The count behind this is $d^2\ge3d-2$, with equality for $d=1$ and $d=2$: a
zero of multiplicity $d$ counts $d$ in $N$ and $1$ in $N_d$. Zeros off the
line and zeros of multiplicity three or more are handled by a lemma of linear
algebra (Lemma~\ref{lem:threshold}), which shows that they cannot bring the
energy below this count. The lemma has a price. The zeros that are left enter
through $\tr\varphi_c(G)$ and not through $\tr G^2$, where $G$ is their Gram
matrix and $\varphi_c$ replaces the square of an eigenvalue above a level $c$
by its tangent line. We call this \emph{clipping}. The second sum
in~\eqref{eq:pair-intro} is $\tr G^2-\sum_id_i^2$, so it can be used in full
only where $G$ has no eigenvalue above $c$.

The second sum is the energy of the interaction between different zeros. For
a suitable window it cannot be small: there are $N$ units of multiplicity on a
line of length $N$, so most zeros have neighbours within a few mean spacings,
and a double zero interacts twice as strongly with its neighbours as a simple
one. The flat window shows why the window matters: its kernel vanishes at the
nonzero integers, and the unit lattice has no interaction at all. Every unit
of interaction energy is a unit that is not available for repeated zeros.
\emph{The idea is to prove a lower bound for the interaction energy that is
linear in the number of simple points, the number of double points and the
length of the line, and to choose the window so that this bound is large.}

The paper~\cite{K1} proved such a bound of small size. There the second sum
entered through a matrix inequality, the main result of that
paper~\cite[Theorem~2]{K1}. It bounds the clipped energy from below by a
function of the Gram matrix $(K(y_i-y_j))$ of the points without their
multiplicities, and the term so obtained was called the \emph{defect}. That
inequality is not used here, for two reasons. It counts every pair with the
weight $1$ in place of $d_id_j$, so it cannot reward a double zero more than a
simple one. And its lower bound grows only linearly above a second level
$1+\tau$, which limits the argument to blocks of about $1300$ points, with a
loss in every block. For points that keep a fixed distance from each other
neither loss is necessary: the clipped energy is then exactly the second sum
in~\eqref{eq:pair-intro}, with the weights (item~(1) below). From~\cite{K1} we
keep the threshold lemma, the operator and the count of the zeros
(Lemma~\ref{lem:threshold}, Section~\ref{sec:proof}). Its local input, an
inequality for seven consecutive points without multiplicities imported from
work on simple zeros~\cite{Ainta,Trmdy,Shi,Tawan}, is replaced by one with
multiplicities that is proved here. Table~\ref{tab:papers} puts the two
papers side by side, and Figure~\ref{fig:method} shows the argument in one
picture, for zeros on the line of multiplicity at most two.

\begin{figure}
\begin{tikzpicture}[x=1cm,y=1cm,
  pt/.style={circle,fill=black,inner sep=0pt,minimum size=4pt},
  dbl/.style={circle,fill=black,inner sep=0pt,minimum size=8pt},
  box/.style={draw=black,line width=0.5pt,inner sep=4pt,align=center,text width=8.6cm,minimum height=1.15cm}]
\fill[black!15] (4.25,-0.45) rectangle (10.95,1.75);
\draw[line width=0.5pt] (0,0)--(12.6,0);
\foreach \x in {0.8,1.8,4.6,5.6,6.6,7.6,9.6,10.6,11.6,12.3} \node[pt] at (\x,0) {};
\node[dbl] at (2.9,0) {};
\node[dbl] at (8.6,0) {};
\foreach \a/\b in {4.6/5.6,5.6/6.6,6.6/7.6,7.6/8.6,8.6/9.6,9.6/10.6}
  \draw[line width=1.5pt] (\a,0.08) to[out=62,in=118] (\b,0.08);
\foreach \a/\b in {4.6/6.6,5.6/7.6,6.6/8.6,7.6/9.6,8.6/10.6}
  \draw[line width=1pt] (\a,0.08) to[out=58,in=122] (\b,0.08);
\foreach \a/\b in {4.6/7.6,5.6/8.6,6.6/9.6,7.6/10.6}
  \draw[line width=0.5pt] (\a,0.08) to[out=56,in=124] (\b,0.08);
\draw[line width=0.5pt,dashed] (4.6,0.08) .. controls (6.6,2.1) and (9.6,2.1) .. (11.6,0.08);
\node[anchor=south] at (7.3,1.8) {a frame of seven consecutive zeros};
\node[anchor=north] at (7.6,-0.5) {overlaps $K(y_j-y_i)^2$ of the pairs inside the frame};
\node[anchor=south] at (11.85,1.0) {dropped};
\node[anchor=south] at (1.3,0.2) {simple zeros};
\draw[decorate,decoration={brace,mirror,amplitude=3pt},line width=0.5pt] (0.8,-0.2)--(1.8,-0.2);
\node[anchor=north] at (1.3,-0.35) {mean spacing $1$};
\draw[line width=0.5pt,-{Stealth[length=5pt]}] (2.9,1.05)--(2.9,0.22);
\node[anchor=south,align=center] at (2.7,1.05) {double zero:\\ energy $2^2=4$};
\node[box] (a) at (6.3,-2.2) {Pair correlation gives the total energy:\\ \mbox{$\operatorname{tr}A^2=(C(f)+o(1))\,N$}};
\node[box] (b) at (6.3,-3.75) {Repeated zeros cost energy:\\ \mbox{$\operatorname{tr}A^2\ge n_1+4n_2+\text{interaction}$}};
\node[box] (c) at (6.3,-5.30) {Seven-point inequality, summed over all frames:\\ \mbox{$\text{interaction}\ge x_1n_1+x_2n_2-b\,N$}};
\node[box] (d) at (6.3,-6.85) {So few zeros are double:\\ \mbox{$\liminf N_d/N\ge(3-C+x_2-x_1-b)/(2-2x_1+x_2)$}};
\draw[line width=0.5pt,-{Stealth[length=5pt]}] (a)--(b);
\draw[line width=0.5pt,-{Stealth[length=5pt]}] (b)--(c);
\draw[line width=0.5pt,-{Stealth[length=5pt]}] (c)--(d);
\end{tikzpicture}
\caption{The method. The zeros are points on a line with mean spacing one. A
zero of multiplicity $d$ carries the energy $d^2$, and two zeros at distance
$t$ carry the energy $2d\,d'K(t)^2$. The total is known from pair
correlation~\cite{Mon73,BGST}, up to a factor $1+o(1)$
(Proposition~\ref{prop:energy}). A local inequality bounds from below the
pair energy inside a frame of seven consecutive zeros
(Proposition~\ref{prop:local}); pairs beyond the frame are dropped. The frame
drawn here contains a double zero; simple and double zeros have separate
rewards. The arcs are drawn up to index distance three only. Summed over all positions of the
frame, the inequality gives a lower bound for the interaction that is linear
in the numbers $n_1$, $n_2$ of simple and double zeros
(Lemma~\ref{lem:transfer}, Theorem~\ref{thm:gram}), and then the energy that
is left limits the number of double zeros (Theorem~\ref{thm:general}). The
boxes are for zeros on the line of multiplicity at most two, without error
terms; Section~\ref{sec:proof} has the general case.}
\label{fig:method}
\end{figure}

\begin{table}
\caption{The first paper of this series and the present one. Both prove a
linear lower bound for the interaction energy and insert it into the same
count.}
\label{tab:papers}
\begin{tabular}{>{\raggedright\arraybackslash}p{0.22\textwidth}>{\raggedright\arraybackslash}p{0.34\textwidth}>{\raggedright\arraybackslash}p{0.34\textwidth}}\toprule
 & First paper~\cite{K1} & This paper\\\midrule
matrix step & bounds the energy of a block of about $1300$ points from below,
with a loss & exact for separated points: every pair is counted, with the
weight $d_id_j$\\\addlinespace
local inequality & seven points without labels, imported & seven points with
labels and with a storage function, proved here\\\addlinespace
window & the Montgomery--Taylor window with a small correction; $C(f)$ close
to its minimum & a trigonometric polynomial chosen for the interaction; $C(f)$
larger by $0.005$\\\addlinespace
certificate $(x_1,x_2,b)$ & $(0.00443,\,0.00443,\,0.00218)$ &
$(0.05435,\,0.06943,\,0.03731)$\\\addlinespace
gain over $(3-C(f))/2$ & $0.00076$ & $0.00536$\\\addlinespace
bound for $\liminf N_d/N$ & $0.836992$ & $0.839004$\\\addlinespace
formal proof in Lean & the matrix inequalities and the count; the operator,
the energy and the local inequality are hypotheses & the theorem itself, from
the axioms of Mathlib and one computed statement (Section~\ref{sec:lean})\\\bottomrule
\end{tabular}
\end{table}

\subsection*{What is new}
\begin{enumerate}
\item \emph{Separation removes the clipping.} Points closer than
$\theta=\frac{17}{20}$ are paired off and treated as two-by-two blocks. The
remaining points are $\theta$-separated. Remark~\ref{rem:c} says where the
value of $\theta$ comes from. For such points the Gram matrix
$(K(y_i-y_j))$ is bounded by a constant, by Selberg's majorant argument for
the large sieve~\cite{Mon78,Vaa85}; this is the direct half of an inequality
of Ingham type~\cite{Ing36,KL05}. As a consequence the clipped energy of a
separated block is exactly the interaction sum in~\eqref{eq:pair-intro}, with
the weights $d_id_j$ (Lemma~\ref{lem:maj}, Theorem~\ref{thm:gram}). For simple zeros on the critical
line this step is due to G.~Freund~\cite{Freund}: there the zeros closer
than $\frac45$ are paired off, the Gram matrix of the rest is bounded by a
majorant of the same kind, and the seven-point inequality of~\cite{Geb} then
gives the value $0.673492$ quoted above without the loss of the block
argument of~\cite{Ainta}. Disjoint pairs of close zeros were used before
in~\cite{ShiSJ}. D'Souza~\cite{DSouza} uses the step for points with
multiplicities, with a majorant of Beurling--Selberg type. The value
of~\cite{Freund} is stated in a draft pull request to a public git repository
and not in a preprint; we learned of it from the survey
in~\cite[Section~1.4]{DSouza} when the proofs of this paper were complete, and
had found the step independently.
\item \emph{A local inequality with a storage function.} For seven consecutive
separated points with labels $d_i\in\{1,2\}$ we prove, with computer
assistance, an inequality between their spread, their weighted squared overlaps
and a reward which is larger for double points
(Proposition~\ref{prop:local}). The weights depend on the pair of labels. The
inequality also contains functions of the gaps which sum to zero over the six
positions; when the inequality is added over all sets of seven consecutive
points these terms telescope (Lemma~\ref{lem:transfer}). For points without
labels the lemma with such a correction is in~\cite[Section~3.3]{Chandru},
and a seven-point certificate with a nonzero correction, the increment of a
function of five gaps, is in~\cite{Tawan}; certificates with a function of
the state of a frame, and the bound for the loss by its oscillation, are also
in~\cite{Trmdy}. Local inequalities over all patterns of
multiplicities one and two were found independently by
D'Souza~\cite[Definition~5.1]{DSouza}. Our
corrections are sums of functions of one gap, which depend on the labels and
are bounded. For Theorem~\ref{thm:line} the inequality is for eight
consecutive points without labels (Proposition~\ref{prop:local2}); frames of
nine points are used in~\cite{SantLeal,Shi,Trmdy} and frames of eight
in~\cite{Lavery}.
\item \emph{A window for the interaction.} For Theorem~\ref{thm:main} we take
$f(u)=1+\sum_{j=1}^6a_j\cos(2\pi ju)$, which we call the \emph{first window};
the \emph{second window}, for Theorem~\ref{thm:line}, and the \emph{third
window}, for Theorem~\ref{thm:third}, have the same form with other
coefficients, the second with ten terms. For the first window the plain bound $(3-C(f))/2=0.83364\ldots$
is \emph{smaller} than that of the Montgomery--Taylor window, but the
interaction is worth about $0.0054$ instead of about $0.0008$. Windows with
further cosine terms, chosen with the local inequality in view, are used for
simple zeros in~\cite{AMTOPA,Geb,Lavery,Trmdy}.
\end{enumerate}

Two of these devices have names elsewhere. The correction terms of the seven-point
inequality are the increments of a storage function in the sense of
Willems~\cite{Wil72}: the inequality is a dissipation inequality, and summing
it along the line leaves only a boundary term (Remark~\ref{rem:control}). And
the search for the best certificate is dual to an average-cost optimal-control
problem, whose value periodic trial configurations bound from the other side
(Remark~\ref{rem:bellman}). In~\cite{Tawan} the correction is called a
Bellman coboundary, and~\cite{Trmdy} speaks of transfer-operator
certificates.

\subsection*{One formula}
Call $(x_1,x_2,b)$ a \emph{certificate} if the interaction energy, in the
clipped form defined in Section~\ref{sec:proof}, of every configuration with
$n_1$ simple and $n_2$ double points is at least
$x_1n_1+x_2n_2-b\cdot\text{length}$, up to bounded and negligible terms. Then
(Theorem~\ref{thm:general})
\[
 \liminf_{T\to\infty}\frac{N_d(T)}{N(T)}\ \ge\ \frac{3-C(f)+x_2-x_1-b}{2-2x_1+x_2}.
\]
The bound of~\cite{AF,Lam} is the certificate $(0,0,0)$. The bound
of~\cite{Chandru} is the value of this formula at
$(\frac{19}{5000},0,\frac1{500})$, where only the simple points interact. The
bound of~\cite{K1} is a certificate
with $x_1=x_2$, because its argument does not distinguish simple from double
zeros. Here $(x_1,x_2,b)=(0.054350,\,0.069433,\,0.037314)$.

For zeros that are simple and on the critical line only the simple zeros on
the line may interact, and a pair $(x,b)$ with interaction energy at least
$x\,n-b\cdot\text{length}$ for $n$ simple points gives
$\liminf N_0^s(T)/N(T)\ge(2-C(f)-b)/(1-x)$
(Theorem~\ref{thm:line-general}). The bound of~\cite{AF,Lam} is the pair
$(0,0)$, and in Theorem~\ref{thm:line}, $(x,b)=(0.008722,\,0.004389)$.

\begin{figure}
\begin{tikzpicture}[x=1cm,y=1cm,
  proved/.style={circle,fill=black,inner sep=0pt,minimum size=6pt},
  stated/.style={circle,draw=black,fill=white,line width=0.5pt,inner sep=0pt,minimum size=6pt},
  ceil/.style={diamond,draw=black,fill=white,line width=0.5pt,inner sep=0pt,minimum size=7pt}]
\node[proved] at (0.3,3.9) {}; \node[anchor=west] at (0.45,3.9) {proved};
\node[stated] at (2.6,3.9) {}; \node[anchor=west] at (2.75,3.9) {stated, not refereed};
\node[ceil] at (7.4,3.9) {}; \node[anchor=west] at (7.57,3.9) {ceiling};
\node[anchor=west] at (0,3.0) {(a)};
\draw[line width=0.5pt,dashed] (10.247,-2.1)--(10.247,1.9);
\node[anchor=south] at (10.247,1.9) {$84\%$};
\draw[line width=0.5pt] (0.2,0)--(12.4,0);
\foreach \v/\x in {0.834/1.635,0.836/4.506,0.838/7.376}
  {\draw[line width=0.5pt] (\x,0)--(\x,-0.1); \node[anchor=north] at (\x,-0.1) {$\v$};}
\node[proved] (p1) at (0.678,0) {}; \node[proved] (p2) at (4.865,0) {};
\node[stated] (p5) at (5.265,0) {}; \node[proved] (p3) at (5.931,0) {}; \node[proved] (p4) at (8.818,0) {};
\foreach \x in {5.529,5.547,6.486,6.773} \node[stated] at (\x,0) {};
\node[stated] (p6) at (7.450,0) {};
\draw[line width=0.5pt] (p6)--(7.450,0.5); \node[anchor=south west,align=left] at (7.360,0.5) {$0.83805$\\ \cite{DSouza}};
\draw[line width=0.5pt] (p1)--(0.678,0.5); \node[anchor=south west,align=left] at (0.2,0.5) {$5/6$\\ \cite{AF}};
\draw[line width=0.5pt] (p2)--(4.865,0.5); \node[anchor=south east,align=right] at (4.95,0.5) {$0.83625$\\ \cite{AF,Lam}};
\draw[line width=0.5pt] (p5)--(5.265,1.6); \node[anchor=south,align=center] at (5.265,1.6) {$0.83652$ \cite{Chandru}};
\draw[line width=0.5pt] (p3)--(5.931,0.5); \node[anchor=south west,align=left] at (5.85,0.5) {$0.83699$\\ \cite{K1}};
\draw[line width=0.5pt] (p4)--(8.818,1.9); \node[anchor=south,align=center] at (8.818,1.9) {$0.83900$\\ Theorem~\ref{thm:main}};
\node[ceil] (c1) at (9.029,0) {}; \node[ceil] (c2) at (10.218,0) {}; \node[ceil] (c3) at (11.582,0) {};
\draw[line width=0.5pt] (c1)--(9.029,-0.75); \node[anchor=north east,align=right] at (9.35,-0.75) {$0.83916$\\ seven points};
\draw[line width=0.5pt] (c2)--(9.95,-1.85); \node[anchor=north east,align=right] at (10.1,-1.8) {$0.83998$, any frame length};
\draw[line width=0.5pt] (c3)--(11.582,-0.75); \node[anchor=north east,align=right] at (12.6,-0.75) {$0.84093$\\ any window};
\begin{scope}[yshift=-5.6cm]
\node[anchor=west] at (0,2.55) {(b)};
\draw[line width=0.5pt] (0.2,0)--(12.4,0);
\foreach \v/\x in {0.6724/1.635,0.6728/4.506,0.6732/7.376,0.6736/10.247}
  {\draw[line width=0.5pt] (\x,0)--(\x,-0.1); \node[anchor=north] at (\x,-0.1) {$\v$};}
\node[proved] (q1) at (2.360,0) {};
\foreach \x in {5.999,6.357,7.326,8.186,8.216,8.623,8.805,8.812,8.930,8.992,9.180,9.357,9.369} \node[stated] at (\x,0) {};
\node[stated] (g1) at (9.475,0) {};
\draw[line width=0.5pt] (g1)--(9.325,1.45); \node[anchor=south east] at (9.495,1.42) {$0.67349$ \cite{Freund}};
\node[proved] (q3) at (9.804,0) {};
\draw[line width=0.5pt] (q1)--(2.360,0.5); \node[anchor=south,align=center] at (2.360,0.5) {$0.67250$\\ \cite{AF,Lam}};
\draw[decorate,decoration={brace,amplitude=4pt},line width=0.5pt] (5.9,0.3)--(9.439,0.3);
\node[anchor=south,align=center] at (6.3,0.5) {$0.67300$ to $0.67347$,\\ the values listed in the text};
\draw[line width=0.5pt] (q3)--(9.804,1.6); \node[anchor=south west,align=left] at (9.711,1.6) {$0.67353$\\ Theorem~\ref{thm:line}};
\node[ceil] (d1) at (10.004,0.24) {}; \node[ceil] (d2) at (11.838,0) {};
\draw[line width=0.5pt] (d1)--(10.274,0.6); \node[anchor=west,align=left] at (10.254,0.8) {$0.67357$\\ eight points};
\draw[line width=0.5pt] (d2)--(11.838,-0.75); \node[anchor=north east,align=right] at (12.6,-0.75) {$0.67383$\\ any frame\\ length};
\end{scope}
\end{tikzpicture}
\caption{What is proved and where the method stops: (a)~distinct zeros,
$\liminf N_d(T)/N(T)$; (b)~simple zeros on the critical line,
$\liminf N_0^s(T)/N(T)$. Filled circles are proved lower bounds. Open circles
are values stated in work that has not been refereed and that we have not
verified. Diamonds are ceilings, that is, upper bounds for what a certificate
of the kind used here can give: with frames of seven points in panel~(a)
and of eight points in panel~(b), with frames of
any length and, in panel~(a), for any window $f$ with $V(f)\le10\,B(f)$ in the
notation of Section~\ref{sec:allwindows}. They limit the method and are not
bounds for the proportions of zeros. In panel~(b) the open circles are the
values listed before Theorem~\ref{thm:line}, of which the largest is
labelled, and the first ceiling is drawn just above the axis. The
ceilings of panel~(b) are for the scheme of Section~\ref{sec:line} only. The
scale of panel~(b) is about five times finer than that of panel~(a).}
\label{fig:ladder}
\end{figure}

\subsection*{How far the method goes}
For a fixed window the question of the best constant is a ground-state
problem for a one-dimensional gas with two kinds of particles, simple points
and double points, and pair interaction $d_id_jK(y_i-y_j)^2$ at prescribed
density. Trial states bound from above what any argument of this type can
prove. Section~\ref{sec:limits} has the proofs and Table~\ref{tab:ceilings}
there the numbers for distinct zeros; the conclusions are these. The bound of
Theorem~\ref{thm:main} is larger than the ceilings for the windows used
before, so a new window was necessary for it. For the first window less than
$0.0010$ remains, so it cannot give $84\%$, and within our scheme inequalities
for seven consecutive points cannot give more than $0.83916$. For every window
$f$ with $V(f)\le10\,B(f)$, a regularity condition that holds for the
windows of the table and for the second window, the ceiling is $0.84093$
(Corollary~\ref{cor:allwindows}), so less than $0.002$ is left. This last
ceiling rests on a construction of Alp\"oge and Furman~\cite{AF}, with a law
on periodic configurations that we computed and certified ourselves
(Section~\ref{sec:allwindows}).

For zeros that are simple and on the critical line, Theorem~\ref{thm:line}
proves $0.673538$ with the second window, inequalities for eight consecutive
points cannot give more than $0.67357$, and no lower bound for the pair energy
of simple points can give more than $0.67383$
(Proposition~\ref{prop:line-ceiling}). Upper bounds of this kind for other
windows are in~\cite{GebCap,Trmdy}. For every window of the regularity
above the limit is $0.68185$ (Corollary~\ref{cor:allwindows}), in agreement
with~\cite[Section~7.2]{AF}. Figure~\ref{fig:ladder} shows the numbers for
both quantities.

\subsection*{Organisation and formal verification}
Each bound is proved in four steps, shown in Figure~\ref{fig:method}: the
energy of the window (Proposition~\ref{prop:energy}); a local inequality for
seven or eight points (Propositions~\ref{prop:local}, \ref{prop:local2}
and~\ref{prop:local3}); the bound for the interaction energy of all separated
points that follows from it (Theorem~\ref{thm:gram},
Proposition~\ref{prop:gram2}); and the count of the zeros
(Theorems~\ref{thm:general}, \ref{thm:line-general}
and~\ref{thm:further-general}).
Section~\ref{sec:matrix} contains the inequalities, with all parameters
general. Section~\ref{sec:proof} proves Theorem~\ref{thm:main}: the operator,
the passage from a certificate to a bound, and then the constants.
Section~\ref{sec:cert} describes the verification of the seven-point
inequality, Section~\ref{sec:line} proves Theorem~\ref{thm:line},
Section~\ref{sec:further} proves Theorems~\ref{thm:further}
and~\ref{thm:third}, and
Section~\ref{sec:limits} describes the limits of the method. The proofs of
Theorems~\ref{thm:main}, \ref{thm:line}, \ref{thm:further}
and~\ref{thm:third} and of
Corollary~\ref{cor:simple} are formalised in Lean~4 on top of the library of~\cite{Claude26,AF}, which
derives the analytic inputs from the axioms of Mathlib and has the machinery
for a discretised form of~\eqref{eq:energy-intro} for a class of windows;
Section~\ref{sec:lean} states what is proved and what is trusted. The Lean
developments, the certificate data and the scripts accompany this
paper.\footnote{\url{https://github.com/kristianmk/839}; the same files,
except six settings files of code tools, are attached to the arXiv version of
this paper.} We refer to them as the
accompanying files.

\section{The inequalities}\label{sec:matrix}

Throughout this section $f$ is an even probability density on $I$, zero
outside $I$, and $K=\hat f$. Then $K$ is real and even, $K(0)=1$ and
$|K|\le1$. All other parameters are general; their values are fixed in
Section~\ref{sec:proof}.

For real $c$ let
\[
 \varphi_c(t)=t^2-(t-c)_+^2=\begin{cases}t^2,&t\le c,\\2ct-c^2,&t>c,\end{cases}
\]
a quadratic saturated to linear growth above the level $c$. It is convex and
$C^1$. For Hermitian $X$ write $\tr\varphi_c(X)=\sum_i\varphi_c(\lambda_i(X))$,
and $X\preceq Y$ when $Y-X$ is positive semidefinite.

For Hermitian $X$ let $X_\pm\succeq0$ carry its positive and negative
eigenvalues, so that $X=X_+-X_-$ and $X_+X_-=0$.

\begin{lemma}[{\cite[Lemma~4]{K1}}]\label{lem:threshold}
Let $P\succeq0$, let $Q$ be Hermitian with at most $p$ positive eigenvalues,
let $A=P+Q$, and let $c$ be real. Then
$\tr A^2\ge2c\tr A-2c\tr P+\tr\varphi_c(P)-c^2p$.
\end{lemma}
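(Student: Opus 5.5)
The plan is to complete the square so that the inequality becomes a comparison of negative parts, and then to settle that comparison with Weyl's eigenvalue inequalities for a perturbation of rank at most $p$. We work with $n\times n$ Hermitian matrices and write $M=P-cI$, so that $A-cI=M+Q$.

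\emph{Reduction.} For real $t$ one has the scalar identity
\[
 \varphi_c(t)-2ct=(t-c)^2-(t-c)_+^2-c^2=(t-c)_-^2-c^2 .
\]
Applied to the eigenvalues of $P$, this gives $\tr\varphi_c(P)-2c\tr P=\tr(M_-^2)-c^2n$. Likewise $\tr A^2-2c\tr A=\tr(A-cI)^2-c^2n$. The terms $-c^2n$ cancel on the two sides, so the lemma is equivalent to
\[
 \tr\bigl((M+Q)^2\bigr)\ \ge\ \tr(M_-^2)-c^2p .
\]

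\emph{The case $c<0$.} Here $M=P-cI\succeq -cI\succ0$, so $M_-=0$. The right-hand side is then $-c^2p\le0$, while the left-hand side is the square of a Frobenius norm and so is nonnegative. This case is done.

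\emph{The case $c\ge0$.} Order eigenvalues decreasingly, $\lambda_1\ge\dots\ge\lambda_n$, and carry out three steps.
\begin{enumerate}
\item Since $Q=Q_+-Q_-\preceq Q_+$, monotonicity gives $\lambda_k(M+Q)\le\lambda_k(M+Q_+)$ for every $k$.
\item The hypothesis on $Q$ says that $Q_+$ has rank at most $p$. Weyl's interlacing inequality for a perturbation of rank at most $p$ then gives $\lambda_{j+p}(M+Q_+)\le\lambda_j(M)$ for $1\le j\le n-p$. Combined with step~(1), $\lambda_{j+p}(M+Q)\le\lambda_j(M)$ for the same $j$.
\item Let $J=\{j\le n-p:\lambda_j(M)<0\}$. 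For $j\in J$ step~(2) gives $\lambda_{j+p}(M+Q)\le\lambda_j(M)<0$, hence $\lambda_{j+p}(M+Q)^2\ge\lambda_j(M)^2$. The indices $j+p$ are distinct, so summing over $J$ yields
\[
 \tr\bigl((M+Q)^2\bigr)\ \ge\ \sum_{j\in J}\lambda_j(M)^2 .
\]
\end{enumerate}

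\emph{Counting the lost eigenvalues.} The sum over $J$ misses only the negative eigenvalues $\lambda_j(M)$ with $j>n-p$, and there are at most $p$ of them. Each eigenvalue of $M$ is at least $-c$ because $P\succeq0$. So each missed negative eigenvalue lies in $[-c,0)$ and has square at most $c^2$. The missed terms therefore total at most $c^2p$, which gives $\sum_{j\in J}\lambda_j(M)^2\ge\tr(M_-^2)-c^2p$ and proves the reduced inequality.

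The only delicate point is the index bookkeeping in steps~(2) and~(3): the shift by $p$ must go in the direction that pushes eigenvalues of $M+Q$ down, and for that it matters that $Q$ has at most $p$ \emph{positive} eigenvalues, not that $Q$ has small rank. The rest is the scalar identity above.
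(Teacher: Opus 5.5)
Your proof is correct, and it takes a different route from the paper's. The paper never orders eigenvalues. With $Z=Q_-$ it writes $A=(P-Z)+Q_+$ and drops the cross term, since $2\tr((P-Z)Q_+)=2\tr(PQ_+)\ge0$ because $ZQ_+=0$. It then bounds $\tr Q_+^2\ge2c\tr Q_+-c^2p$ by the tangent line, and finishes with $\tr(X_+-Z)^2\ge0$ for $X=P-cI$, using $\tr(X_-Z)\ge0$.

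So the paper charges the loss $c^2p$ to the at most $p$ positive eigenvalues of $Q$. You charge it instead to the at most $p$ lowest eigenvalues of $P-cI$ that Weyl's shift by $p$ leaves unmatched, each lying in $[-c,0)$.

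What each route buys:
\begin{itemize}
\item The paper's argument is purely trace-algebraic. It needs no eigenvalue ordering, no interlacing, and no cancellation of $c^2n$ terms.
\item Your argument shows more plainly where the hypothesis on $Q$ enters: only through the rank of $Q_+$, with $Q_-$ helping by monotonicity. It is the eigenvalue form of the lemma.
\end{itemize}

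The paper reuses its intermediate inequality $\tr A^2\ge\tr(P-Z)^2+\tr Q_+^2$ to price $Q_+$ at a second level $t\ge c$ in Lemma~\ref{lem:threshold2}. Your version gives that lemma as well: apply it at level $t$ and use $\varphi_t\ge\varphi_c$ for $t\ge c$.
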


\begin{proof}
Write $Z=Q_-$, so that $A=(P-Z)+Q_+$. The cross term is
$2\tr((P-Z)Q_+)=2\tr(PQ_+)\ge0$, since $ZQ_+=0$. Hence
$\tr A^2\ge\tr(P-Z)^2+\tr Q_+^2$. Also $\tr Q_+^2\ge2c\tr Q_+-c^2p$, because
$x^2\ge2cx-c^2$. It remains to show $\tr(P-Z)^2+2c\tr Z\ge\tr\varphi_c(P)$.
Since $\varphi_c(t)=t^2-(t-c)_+^2$, this is
$\tr X_+^2-2\tr(XZ)+\tr Z^2\ge0$ with $X=P-cI$. Now
$\tr(XZ)\le\tr(X_+Z)$ because $\tr(X_-Z)\ge0$. So the left side is at least
$\tr(X_+-Z)^2\ge0$.
\end{proof}

For $c=2$ the lemma contains the rank--trace inequality
of~\cite[Lemma~3.2]{AF}, because $\varphi_2(t)\ge2t-1$ for $t>0$.

\begin{lemma}[pinching]\label{lem:pinch}
Let $\varphi$ be convex and let $G$ be Hermitian. For any partition of the
index set into blocks, $\tr\varphi(G)\ge\sum_B\tr\varphi(G_{BB})$.
\end{lemma}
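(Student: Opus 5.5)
The plan is to reduce the statement to the classical pinching inequality for diagonal entries, $\tr\varphi(G)\ge\sum_i\varphi(G_{ii})$, applied in a rotated basis. Let $E$ be the orthogonal projection-valued pinching $\mathcal{C}(G)=\sum_B \Pi_B G\Pi_B$, where $\Pi_B$ is the coordinate projection onto the indices in the block $B$. Then $\mathcal{C}(G)$ is block diagonal with blocks $G_{BB}$. Its spectrum is the union, with multiplicity, of the spectra of the blocks $G_{BB}$. Hence $\tr\varphi(\mathcal C(G))=\sum_B\tr\varphi(G_{BB})$, and it suffices to prove $\tr\varphi(G)\ge\tr\varphi(\mathcal C(G))$.

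For this I diagonalise each block: choose a unitary $U_B$ on the coordinates of $B$ with $U_B^*G_{BB}U_B$ diagonal, and let $U=\bigoplus_B U_B$. Then $U^*\mathcal C(G)U$ is diagonal. Since $U$ is block diagonal, $U^*\mathcal C(G)U=\mathcal C(U^*GU)$. Hence the diagonal entries of $H:=U^*GU$ are exactly the eigenvalues $\mu_k$ of $\mathcal C(G)$.

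Now write $H=V\Lambda V^*$ with $\Lambda$ the eigenvalues $\lambda_j$ of $G$, which are unchanged by the unitary conjugation. Then
\[
 H_{kk}=\sum_j |V_{kj}|^2\lambda_j ,
\]
where $\sum_j|V_{kj}|^2=1$. By Jensen's inequality for convex $\varphi$,
\[
 \varphi(H_{kk})\le\sum_j|V_{kj}|^2\varphi(\lambda_j).
\]
Summing over $k$ and using $\sum_k|V_{kj}|^2=1$ gives $\sum_k\varphi(\mu_k)\le\sum_j\varphi(\lambda_j)=\tr\varphi(G)$, which is the claim.

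There is no real obstacle; the only point needing care is the bookkeeping identity $U^*\mathcal C(G)U=\mathcal C(U^*GU)$, valid because $U$ commutes with every $\Pi_B$. Convexity is used only through Jensen on finitely many points, so no continuity or differentiability of $\varphi$ is required. In the application $\varphi=\varphi_c$ is convex, so the lemma applies.
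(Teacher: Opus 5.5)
Your proof is correct and is essentially the paper's argument. The columns of your $U$ are the orthonormal basis of block eigenvectors used there, with $H_{kk}=\langle e_k,Ge_k\rangle$; your Jensen step with weights $|V_{kj}|^2$ is Jensen's inequality for the spectral measure of $G$ at $e_k$; and your final summation is $\sum_k\langle e_k,\varphi(G)e_k\rangle=\tr\varphi(G)$, so the detour through $\mathcal C$ is only bookkeeping.
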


\begin{proof}
Choose an orthonormal eigenbasis of each block $G_{BB}$. Together these form
an orthonormal basis $(e_a)$, and $\langle e_a,Ge_a\rangle$ runs over the
eigenvalues of the blocks. By Jensen's inequality for the spectral measure of
$G$ at $e_a$, $\langle e_a,\varphi(G)e_a\rangle\ge\varphi(\langle e_a,Ge_a\rangle)$.
Sum over $a$.
\end{proof}

A \emph{labelled set} $(Y,D)$ is a finite set $Y=\{y_1<\dots<y_l\}$ of real
numbers with labels $d_i\in\{1,2\}$ and $D=\operatorname{diag}(d_i)$. We write
$n_1,n_2$ for the numbers of points with label $1,2$ and
$\spn(Y)=y_l-y_1$ for the \emph{spread}, with $\spn(Y)=0$ if $Y$ is empty. For $\theta>0$ the set is \emph{$\theta$-separated} if
$|y_i-y_j|\ge\theta$ for $i\ne j$.

\begin{lemma}[separated points]\label{lem:maj}
Let $\theta>0$ and let $g$ be an integrable function with $g\ge f$ whose
Fourier transform vanishes outside $(-\theta,\theta)$, and put
$\Lambda=\int g$. If $w$ is measurable and even with $0\le w\le g$ and
$y_1,\dots,y_m$ are $\theta$-separated, then
$\bigl(\hat w(y_i-y_j)\bigr)\preceq\Lambda\cdot I$.
\end{lemma}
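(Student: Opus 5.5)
We prove the quadratic-form inequality directly. Fix complex coefficients $v_1,\dots,v_m$ and put $S(u)=\sum_i v_i e^{-2\pi i u y_i}$. With the convention $\hat w(t)=\int w(u)e^{-2\pi i t u}\,du$ used for $K$, we have
\[
 \sum_{i,j}\overline{v_i}v_j\,\hat w(y_i-y_j)=\int w(u)\,|S(u)|^2\,du ,
\]
up to the harmless relabelling $i\leftrightarrow j$; $w$ is even and real, so $\hat w$ is real and even and the matrix is Hermitian. Since $0\le w\le g$, and since $g\ge f\ge0$ on $I$ gives a natural domination, we get
\[
 \int w|S|^2\le\int g|S|^2 .
\]
Here $g$ is real-valued, because $g\ge f$ is an inequality of reals. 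The step $w\le g$ is where we need $g\ge w$ pointwise; since $w\le g$ is assumed directly, no use of $f$ is needed beyond making the hypotheses consistent.

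Next expand the majorant side:
\[
 \int g(u)|S(u)|^2\,du=\sum_{i,j}\overline{v_i}v_j\,\hat g(y_i-y_j).
\]
This requires $g$ to be integrable, so that $\hat g$ is a continuous function and the expansion is legitimate for a finite sum. Now use separation. For $i\ne j$ we have $|y_i-y_j|\ge\theta$, and $\hat g$ vanishes outside $(-\theta,\theta)$, so $\hat g(y_i-y_j)=0$. The diagonal terms give $\hat g(0)=\int g=\Lambda$. Hence
\[
 \int g|S|^2=\Lambda\sum_i|v_i|^2 ,
\]
and combining the two displays yields $v^*\bigl(\hat w(y_i-y_j)\bigr)v\le\Lambda\,v^*v$ for all $v$, which is the claim $\bigl(\hat w(y_i-y_j)\bigr)\preceq\Lambda I$.

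The only delicate point is justifying $\int w|S|^2\le\int g|S|^2$ and the Fourier expansions. Integrability of $w$ follows from $0\le w\le g$, provided $g$ is integrable with $g\ge0$ wherever it dominates $w$. Since $|S|^2\ge0$, the inequality $w\le g$ pointwise gives it immediately; $g$ may be negative somewhere only where $w$, being $\ge 0$ and $\le g$, would be forced to be negative, so in fact $g\ge w\ge0$ everywhere. Everything else is the standard Selberg majorant computation for the large sieve, so I expect no real obstacle beyond keeping the sign convention of the Fourier transform consistent with that of $K$.
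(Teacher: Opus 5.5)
Your proposal is correct and is essentially the paper's proof: write the quadratic form as $\int w\,|E|^2$, dominate it by $\int g\,|E|^2$, and use that $\hat g$ vanishes at the off-diagonal differences and equals $\Lambda$ on the diagonal. The only differences are cosmetic: you use complex coefficients where the paper uses real ones, which is harmless since the matrix is real symmetric. Your detour about the sign of $g$ is unnecessary, because $0\le w\le g$ already gives $g\ge0$ everywhere.
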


\begin{proof}
Let $v\in\mathbb{R}^m$ and $E(u)=\sum_jv_je^{2\pi iy_ju}$. Then
\[
 \sum_{i,j}v_iv_j\hat w(y_i-y_j)=\int w\,|E|^2\le\int g\,|E|^2
 =\sum_{i,j}v_iv_j\hat g(y_i-y_j)=\Lambda\sum_jv_j^2 ,
\]
because $\hat g(y_i-y_j)=0$ for $i\ne j$.
\end{proof}

\begin{lemma}[close pairs]\label{lem:pair}
Let $d,d'\in\{1,2\}$, let $\beta>0$ with $2+2\sqrt\beta\le c$, and let
$u^2\ge\beta$. Then
$G=\bigl(\begin{smallmatrix}d&\sqrt{dd'}\,u\\\sqrt{dd'}\,u&d'\end{smallmatrix}\bigr)$
satisfies $\tr\varphi_c(G)\ge d^2+d'^2+2dd'\beta$.
\end{lemma}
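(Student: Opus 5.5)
Since $G$ is a real symmetric $2\times2$ matrix, the plan is to compute its eigenvalues explicitly and evaluate $\tr\varphi_c(G)$ case by case, according to how many eigenvalues lie above $c$. Put $s=d+d'$, $p=dd'$, and $r=\sqrt{(d-d')^2+4p u^2}$. The eigenvalues are $\lambda_\pm=(s\pm r)/2$. We have $\tr G^2=d^2+d'^2+2pu^2$, and the target is $\tr G^2$ with $u^2$ replaced by $\beta$. If both eigenvalues are at most $c$, then $\tr\varphi_c(G)=\tr G^2\ge d^2+d'^2+2p\beta$ because $u^2\ge\beta$, and this case is done.

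The small eigenvalue satisfies $\lambda_-\le s/2\le2\le c$, so it is never clipped. The only remaining case is $\lambda_+>c$. Then
\[
\tr\varphi_c(G)=\lambda_-^2+2c\lambda_+-c^2 .
\]
The key observation is that this is an increasing function of $r$ on the range $\lambda_+\ge c$. Its derivative in $r$ is $-\lambda_-+c$, which is nonnegative because $\lambda_-\le c$. More simply, $\varphi_c$ is convex and $C^1$, and $\tr\varphi_c(G)=\varphi_c(s/2+r/2)+\varphi_c(s/2-r/2)$ has derivative $\tfrac12\bigl(\varphi_c'(\lambda_+)-\varphi_c'(\lambda_-)\bigr)\ge0$, since $\lambda_+\ge\lambda_-$ and $\varphi_c'$ is nondecreasing. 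So $\tr\varphi_c(G)$ is nondecreasing in $r$, and hence in $u^2$, on all of $u^2\ge0$, not only on the clipped range.

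It therefore suffices to treat $u^2=\beta$. At $u^2=\beta$ we need $\lambda_+\le c$, because then $\tr\varphi_c(G)=\tr G^2=d^2+d'^2+2p\beta$ exactly. This is where the hypothesis $2+2\sqrt\beta\le c$ enters. Using $(d-d')^2+4p\beta\le(|d-d'|+2\sqrt{p\beta})^2$, we get
\[
\lambda_+\le \tfrac12\bigl(d+d'+|d-d'|\bigr)+\sqrt{dd'\beta}=\max(d,d')+\sqrt{dd'\beta}.
\]
Checking the labels one at a time gives:
\begin{itemize}
\item for $(1,1)$ the bound is $1+\sqrt\beta$;
\item for $(2,2)$ it is $2+2\sqrt\beta$;
\item for $(1,2)$ the exact value is $\lambda_+=\tfrac12\bigl(3+\sqrt{1+8\beta}\bigr)$, and $\sqrt{1+8\beta}\le1+2\sqrt{2\beta}\le1+4\sqrt\beta$, so $\lambda_+\le2+2\sqrt\beta$.
\end{itemize}
In every case $\lambda_+\le c$, and combining with the monotonicity step proves the lemma.

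The main obstacle is the monotonicity step. The rest is routine once one notices that $\tr\varphi_c$ of a $2\times2$ matrix with fixed trace is a symmetric convex function of the eigenvalue spread, and so is nondecreasing in $r$. The one check that needs care is the mixed label case $(1,2)$ against the level $c$.
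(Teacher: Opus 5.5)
Your proof is correct and follows the paper's argument: for fixed trace $d+d'$, convexity of $\varphi_c$ makes $\tr\varphi_c(G)$ nondecreasing in the eigenvalue difference, so $u^2$ may be lowered to $\beta$, where $\lambda_+\le2+2\sqrt\beta\le c$ and no eigenvalue is clipped. Your opening case split is redundant once monotonicity is established, and your label-by-label check of $\lambda_+$ at $u^2=\beta$ fills in a bound that the paper states without computing it.
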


\begin{proof}
The eigenvalues $\lambda_\pm$ of $G$ have sum $d+d'$ and difference
$\sqrt{(d-d')^2+4dd'u^2}$. For fixed sum,
$\varphi_c(\lambda_+)+\varphi_c(\lambda_-)$ is nondecreasing in the
difference, because $\varphi_c$ is convex. So it is at least its value at
$u^2=\beta$. There $\lambda_+\le2+2\sqrt\beta\le c$, so both eigenvalues
are in the range where $\varphi_c(t)=t^2$, and the value is
$\tr G^2=d^2+d'^2+2dd'\beta$.
\end{proof}

\subsection*{The local inequality}
A \emph{frame} is a set of seven points $y_0<\dots<y_6$ with labels
$d_i\in\{1,2\}$ and gaps $g_r=y_{r+1}-y_r\ge\theta$, $0\le r\le5$.
\emph{Local data} for $K$, $\theta$ and a number $\alpha\ge0$ consist of
\begin{itemize}
\item weights $W^{dd'}_{ij}=W^{d'd}_{ij}\ge0$ for $0\le i<j\le6$ and
$d,d'\in\{1,2\}$, with
\begin{equation}\label{eq:budget}
 \sum_{i=0}^{6-s}W^{dd'}_{i,i+s}\le2dd'\qquad(1\le s\le6);
\end{equation}
\item rewards $0\le\kappa_{i,1}\le\kappa_{i,2}$, $0\le i\le6$, with sums
$x_1=\sum_i\kappa_{i,1}$ and $x_2=\sum_i\kappa_{i,2}$;
\item bounded functions $B^{dd'}_r=B^{d'd}_r$ of one variable, $0\le r\le5$,
with
\begin{equation}\label{eq:zero}
 \sum_{r=0}^5B^{dd'}_r(t)=0\qquad\text{for all }t\text{ and all }d,d' ;
\end{equation}
\end{itemize}
such that for every frame
\begin{equation}\label{eq:local}
 \alpha\,(y_6-y_0)+\sum_{0\le i<j\le6}W^{d_id_j}_{ij}K(y_j-y_i)^2
 +\sum_{r=0}^5B^{d_rd_{r+1}}_r(g_r)\ \ge\ \sum_{i=0}^6\kappa_{i,d_i}.
\end{equation}
In words: seven consecutive points cannot at the same time have a small spread
and small overlaps. The left side charges the spread at the rate $\alpha$
and the overlaps with weights that may grow with the multiplicities; the right
side is a reward, larger for double points. Inequalities of this form for
seven points without labels and with $B_r=0$ were introduced for simple zeros
in~\cite{Ainta}. Put
$B^{dd'}_{\le r}=\sum_{j\le r}B^{dd'}_j$ and
\[
 \Phi(g_0,\dots,g_4;d_0,\dots,d_5)=\sum_{r=0}^4B^{d_rd_{r+1}}_{\le r}(g_r),
\]
and let $\Omega$ be a bound for the oscillation $\sup\Phi-\inf\Phi$ over all
arguments.

\begin{lemma}[transfer]\label{lem:transfer}
Assume local data as above. Let $Y=\{y_1<\dots<y_m\}$, $m\ge0$, be
$\theta$-separated with
labels $d_i\in\{1,2\}$, let $\epsilon\ge0$, and let $e_{ij}=e_{ji}\ge0$
$(0<|i-j|\le6)$ be numbers with $e_{ij}\ge K(y_i-y_j)^2-\epsilon$. Then
\[
 \sum_{0<|i-j|\le6}d_id_j\,e_{ij}\ \ge\ x_1n_1+x_2n_2-6\alpha\,\spn(Y)-6x_2-\Omega-48\,\epsilon\,m .
\]
\end{lemma}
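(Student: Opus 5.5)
The plan is to apply the local inequality~\eqref{eq:local} to every window of seven consecutive points of $Y$ and add. If $m\le6$ there is no frame. The right side is then at most $x_2m\le6x_2$, and the left side is nonnegative. The claim holds because $\Omega\ge0$, so assume $m\ge7$. For $k=1,\dots,m-6$ the frame $F_k=(y_k,\dots,y_{k+6})$ with labels $(d_k,\dots,d_{k+6})$ satisfies the hypotheses, since $Y$ is $\theta$-separated. Sum~\eqref{eq:local} over $k$ and treat the four kinds of terms separately.

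\emph{Spread.} The spread terms give $\alpha\sum_k(y_{k+6}-y_k)$. Each gap $y_{a+1}-y_a$ lies in at most six frames, so this is at most $6\alpha\,\spn(Y)$.

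\emph{Rewards.} Point $y_a$ occupies position $i=a-k$ in frame $F_k$. For $7\le a\le m-6$ it occupies every position $i=0,\dots,6$ exactly once, so it receives $\sum_i\kappa_{i,d_a}=x_{d_a}$. Near each end there are six points that miss some positions. Since $0\le\kappa_{i,1}\le\kappa_{i,2}$, each of them loses at most $x_2$ in total, and I will count these boundary points so that the total loss is at most $6x_2$. Hence the summed right side is at least $x_1n_1+x_2n_2-6x_2$.

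\emph{Overlaps.} A pair $(a,b)$ with $a<b$ and $s=b-a\le6$ appears in frame $F_k$ with weight $W^{d_ad_b}_{a-k,b-k}$. By~\eqref{eq:budget}, summing over $k$ gives total weight at most $2d_ad_b$. Since $W\ge0$ and $K^2\le e_{ab}+\epsilon$, the summed overlap terms are at most $\sum_{a<b,\ b-a\le6}2d_ad_b(e_{ab}+\epsilon)$. This equals $\sum_{0<|i-j|\le6}d_id_je_{ij}$ plus an error of at most $\epsilon\sum 2d_ad_b\le 8\epsilon\cdot6m=48\epsilon m$.

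\emph{Storage terms (the main point).} Gap $g^{(a)}=y_{a+1}-y_a$ appears in frame $F_k$ at position $r=a-k$, with term $B^{d_ad_{a+1}}_r(g^{(a)})$. For an interior gap all six positions $r=0,\dots,5$ occur, and the sum vanishes by~\eqref{eq:zero}. Only gaps near the two ends survive.
\begin{itemize}
\item At the left end, gap $a<6$ gets positions $r=0,\dots,a-1$, that is $B_{\le a-1}$.
\item At the right end, the missing positions are the complement, which by~\eqref{eq:zero} equals $-B_{\le r'}$ for the appropriate $r'$.
\end{itemize}
After reindexing, the total is $\Phi(\text{first five gaps and six labels})-\Phi(\text{last five gaps and six labels})$. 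Its absolute value is bounded by the oscillation $\Omega$. Getting this indexing to match exactly the definition of $\Phi$, with $B_{\le r}$ attached to the $r$-th gap, is the step I expect to need the most care. If the first-end and last-end indices line up with opposite conventions, I would apply the symmetry $B^{dd'}=B^{d'd}$.

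Combining the four bounds and rearranging gives the stated inequality.
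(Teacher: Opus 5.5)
Your plan is the paper's proof. You add the local inequality over the $m-6$ frames and bound the spread, pair, reward and correction terms separately. The spread and pair estimates are exactly the paper's. Your gap-by-gap accounting of the corrections is an equivalent rearrangement of the paper's frame-to-frame identity $\sum_{r=0}^5B_r(g_r)=\Phi(\sigma)-\Phi(\sigma')$; that identity follows from $-B_{\le4}=B_5$ and avoids the index bookkeeping.

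\emph{The reward count.} This is the one step you leave as a promise, and the bound you give there does not suffice. Twelve boundary points, each losing at most $x_2$, give a loss of $12x_2$, not $6x_2$. Count by positions instead.
\begin{itemize}
\item Position $i$ is taken by the points $y_{k+i}$ with $1\le k\le m-6$.
\item So exactly six points miss it: $i$ at the left end and $6-i$ at the right end.
\item Each miss costs at most $\kappa_{i,2}$, so the total loss is at most $6\sum_i\kappa_{i,2}=6x_2$.
\end{itemize}
This count also covers $7\le m<12$, where one point can miss positions at both ends.

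\emph{The corrections.} Your worry about the indexing is unfounded. Gap $a$ receives
\[
\sum_{r=L_a}^{U_a}B_r=B_{\le U_a}-B_{\le L_a-1},\qquad U_a=\min(5,a-1),\quad L_a=\max(0,a-m+6),
\]
with $B_{\le-1}=0$ and $B_{\le5}=0$. Hence:
\begin{itemize}
\item gap $a\le5$ contributes $B_{\le a-1}$, which is the term of index $a-1$ of $\Phi(g^{(1)},\dots,g^{(5)};d_1,\dots,d_6)$;
\item gap $m-5+r$ contributes $-B_{\le r}$, which is minus the term of index $r$ of $\Phi(g^{(m-5)},\dots,g^{(m-1)};d_{m-5},\dots,d_m)$.
\end{itemize}
No symmetry is needed. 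The same formula covers $m\le10$, where one gap lies near both ends. In any case, the label symmetry $B^{dd'}=B^{d'd}$ could not have repaired a reversed order of the gaps.
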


\begin{proof}
If $m\le6$ the right side is at most $mx_2-6x_2\le0$ and the left side is
nonnegative. Let $m\ge7$ and add
\eqref{eq:local} over the $m-6$ frames of consecutive points.

\emph{Pairs.} A pair of points at index distance $s\le6$ occurs in the frames
at the positions $(i,i+s)$, always with the same labels. By~\eqref{eq:budget}
its total weight is at most $2d_id_j\le8$, and $2d_id_j$ is its weight on the
left side of the lemma. Replacing $K^2$ by $e_{ij}$ therefore costs at most
$8\epsilon$ per pair, and there are at most $6m$ such pairs.

\emph{Spread.} Each gap lies in at most six frames, so the spread terms add up to
at most $6\alpha\,\spn(Y)$.

\emph{Rewards.} A point receives $\kappa_{i,d}$ for every position $i$ it takes
in a frame. For each position exactly six points are left out, so the rewards
add up to at least $x_1n_1+x_2n_2-6\sum_i\kappa_{i,2}=x_1n_1+x_2n_2-6x_2$.

\emph{Corrections.} By~\eqref{eq:zero}, $-B^{dd'}_{\le4}=B^{dd'}_5$, and
therefore
\[
 \sum_{r=0}^5B^{d_rd_{r+1}}_r(g_r)
 =\Phi(g_0,\dots,g_4;d_0,\dots,d_5)-\Phi(g_1,\dots,g_5;d_1,\dots,d_6).
\]
The second term is the first term of the next frame. So the corrections
telescope to a difference of two values of $\Phi$, which is at most $\Omega$.
\end{proof}

For points without labels, Lemma~\ref{lem:transfer} with a correction of this
kind is~\cite[(3.13)--(3.14)]{Chandru}, with the same function $\Phi$ and the
same loss $\Omega$. Labels of this kind were found independently by
D'Souza~\cite[Definition~5.1]{DSouza}, who calls them marks.

\begin{theorem}[labelled Gram bound]\label{thm:gram}
Assume local data for $K$, $\theta$ and $\alpha$, with reward sums $x_1,x_2$
and oscillation bound $\Omega$. Assume that $K(t)^2\ge\beta$ for
$|t|\le\theta$, where $\beta>0$ satisfies $2+2\sqrt\beta<c$ and
$2dd'\beta\ge x_d+x_{d'}$ for $d,d'\in\{1,2\}$. Then there are constants
$\eta_0>0$ and $C_1$ with the following property. Let $(Y,D)$ be a labelled
set with $l$ points, let $U$ be a real symmetric $l\times l$ matrix and
$0\le\eta\le\eta_0$ such that
\begin{enumerate}
\item[(a)] $|U_{ij}-K(y_i-y_j)|\le\eta$ for all $i,j$, and
\item[(b)] $U_{SS}\preceq\frac c2I$ for every $\theta$-separated subset $S$.
\end{enumerate}
Then $G=D^{1/2}UD^{1/2}$ satisfies
\[
 \tr\varphi_c(G)-\tr D^2\ \ge\ x_1n_1+x_2n_2-6\alpha\,\spn(Y)-(6x_2+\Omega)-C_1\eta\,l .
\]
\end{theorem}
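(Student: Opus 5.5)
The plan is to split the index set into blocks, apply pinching (Lemma~\ref{lem:pinch}) to $\varphi_c$, and bound each block from below. Close pairs are handled by Lemma~\ref{lem:pair}, and the single separated block by Lemma~\ref{lem:maj} together with Lemma~\ref{lem:transfer}. Throughout, $C$ denotes constants depending only on the local data, $c$ and $\beta$.

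\emph{Step 1: the partition.} Scan $y_1<\dots<y_l$ from the left. If the current point $y_i$ has $y_{i+1}-y_i<\theta$, put $\{i,i+1\}$ into a pair block and continue at $i+2$. Otherwise declare $i$ a single and continue at $i+1$. Let $S$ be the set of singles.
\begin{itemize}
\item $S$ is $\theta$-separated. If $i<j$ are singles, then $y_{i+1}-y_i\ge\theta$ because $i$ was not paired, so $y_j-y_i\ge\theta$.
\item Every pair block $\{i,i+1\}$ has $|y_{i+1}-y_i|<\theta$, so $K(y_{i+1}-y_i)^2\ge\beta$.
\end{itemize}
Lemma~\ref{lem:pinch} then gives
\[
\tr\varphi_c(G)\ge\tr\varphi_c(G_{SS})+\sum_{\text{pairs}}\tr\varphi_c(G_{BB}).
\]

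\emph{Step 2: the pair blocks.} For a pair with labels $d,d'$, the block is
\[
G_{BB}=\begin{pmatrix}dU_{ii}&\sqrt{dd'}\,U_{i,i+1}\\\sqrt{dd'}\,U_{i,i+1}&d'U_{i+1,i+1}\end{pmatrix},
\]
where $|U_{ii}-1|\le\eta$ and $U_{i,i+1}^2\ge\beta-3\eta$ for $\eta\le1$. The eigenvalues of these blocks lie in a bounded set, and $\varphi_c$ is $C^1$ there, so replacing the diagonal entries by $d,d'$ changes $\tr\varphi_c$ by at most $C\eta$.

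The inequality $2+2\sqrt\beta<c$ is strict, so $\eta_0$ can be chosen with $2+2\sqrt{\beta-3\eta_0}\le c$. Lemma~\ref{lem:pair} with $\beta-3\eta$ in place of $\beta$ then gives
\[
\tr\varphi_c(G_{BB})\ge d^2+d'^2+2dd'\beta-C\eta\ge d^2+d'^2+x_d+x_{d'}-C\eta .
\]
The second inequality uses the hypothesis $2dd'\beta\ge x_d+x_{d'}$. So each pair pays its share of $\tr D^2$ and earns the full rewards of both of its points.

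\emph{Step 3: the separated block.} By (b) and $D\preceq2I$, we have $G_{SS}=D_S^{1/2}U_{SS}D_S^{1/2}\preceq cI$. All eigenvalues of $G_{SS}$ are therefore at most $c$, where $\varphi_c(t)=t^2$, so
\[
\tr\varphi_c(G_{SS})=\tr G_{SS}^2=\sum_{i\in S}d_i^2U_{ii}^2+\sum_{i\ne j\in S}d_id_jU_{ij}^2 .
\]
The diagonal part is at least $\sum_{i\in S}d_i^2-C\eta|S|$.

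In the off-diagonal part, drop the pairs whose index distance within $S$ exceeds $6$; they are nonnegative. For the rest, enumerate $S$ increasingly and put $e_{ij}=U_{ij}^2$. Since $|U_{ij}|\le1+\eta$, we have $e_{ij}\ge K(y_i-y_j)^2-3\eta$. Lemma~\ref{lem:transfer} applied to the $\theta$-separated labelled set $S$ with $\epsilon=3\eta$ gives
\[
\sum_{i\ne j\in S}d_id_jU_{ij}^2\ge x_1n_1(S)+x_2n_2(S)-6\alpha\spn(S)-6x_2-\Omega-144\eta|S| .
\]
Finally $\spn(S)\le\spn(Y)$.

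\emph{Step 4: adding up.} The rewards from Steps 2 and 3 together cover every point exactly once, giving $x_1n_1+x_2n_2$. The diagonal terms together give $\tr D^2$. The errors total at most $C_1\eta l$. This is the stated inequality.

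The step I expect to need most care is the perturbation in Step 2. One must check that $\eta_0$ can be chosen uniformly so that Lemma~\ref{lem:pair} still applies when $U$ has diagonal entries only close to $1$ and $\beta$ is slightly lowered. The Lipschitz bound for $\tr\varphi_c$ on bounded $2\times2$ matrices then gives the $C\eta$ loss. The combinatorial point, that the greedy singles are separated, is the other place where an error could slip in.
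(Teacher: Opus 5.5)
Your proof is correct and follows the paper's argument. Both proofs apply pinching to close pairs plus a $\theta$-separated remainder, use Lemma~\ref{lem:pair} together with $2dd'\beta\ge x_d+x_{d'}$ for the pairs, and on the separated block use (b) to remove the clipping and then Lemma~\ref{lem:transfer} with $e_{ij}=U_{ij}^2$, $\epsilon=3\eta$. The differences are minor: your greedy pairing of consecutive points is one particular maximal family, and you absorb the perturbed diagonal by a Lipschitz bound for $\tr\varphi_c$ before applying Lemma~\ref{lem:pair} with $\beta-3\eta$, which needs $\eta_0<\beta/3$ but not the strictness of $2+2\sqrt\beta<c$. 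The paper instead keeps the perturbed diagonal and uses that strict inequality to keep the larger eigenvalue below $c$.
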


\begin{proof}
Choose a maximal family of disjoint pairs $\{i,j\}$ with $|y_i-y_j|<\theta$.
The set $S$ of unmatched points is $\theta$-separated, since otherwise the
family could be enlarged. Apply Lemma~\ref{lem:pinch} to the partition into the
pairs and $S$.

A pair with labels $d,d'$ gives the block with diagonal $dU_{ii}$, $d'U_{jj}$
and off-diagonal entry $\sqrt{dd'}\,U_{ij}$, where $|U_{ii}-1|\le\eta$,
$|U_{jj}-1|\le\eta$ and $|U_{ij}|\ge\sqrt\beta-\eta$ by (a). As in
Lemma~\ref{lem:pair}, for a fixed diagonal the trace of $\varphi_c$ is
nondecreasing in $|U_{ij}|$. At $|U_{ij}|=\sqrt\beta-\eta$ the larger
eigenvalue is at most $2(1+\eta)+2\sqrt\beta<c$ when $\eta_0$ is small, so the
value there is the sum of the squares of the entries,
\begin{align*}
 d^2U_{ii}^2+d'^2U_{jj}^2+2dd'(\sqrt\beta-\eta)^2
 &\ge d^2+d'^2+2dd'\beta-16(1+\sqrt\beta)\eta\\
 &\ge d^2+d'^2+x_d+x_{d'}-16(1+\sqrt\beta)\eta .
\end{align*}

On $S$, hypothesis (b) gives $G_{SS}\preceq2\cdot\frac c2I=cI$, so no
eigenvalue is clipped and
\[
 \tr\varphi_c(G_{SS})=\tr G_{SS}^2=\sum_{i\in S}d_i^2U_{ii}^2+\sum_{i\ne j\in S}d_id_jU_{ij}^2 .
\]
The first sum is at least $\sum_{i\in S}d_i^2-8\eta|S|$. The second is at
least its part over the pairs at index distance at most six in $S$. To this
part apply Lemma~\ref{lem:transfer} with $e_{ij}=U_{ij}^2$ and
$\epsilon=3\eta$, which is admissible because $|K|\le1$: it is at least
$x_1n_1(S)+x_2n_2(S)-6\alpha\,\spn(S)-(6x_2+\Omega)-144\,\eta|S|$. Adding the
contributions and using $\spn(S)\le\spn(Y)$ gives the claim.
\end{proof}

For $U=(K(y_i-y_j))$ hypothesis (a) holds with $\eta=0$, and (b) follows from
Lemma~\ref{lem:maj} with $w=f$ when $\Lambda\le\frac c2$. The tolerance $\eta$
is needed because the operator below is built from a smoothed window.

\begin{remark}[the three devices in other terms]\label{rem:control}
Lemma~\ref{lem:maj} is the large sieve inequality in Selberg's form. It is the
direct half of an inequality of Ingham type~\cite{Ing36,KL05}: a gap condition
on the frequencies $y_j$ bounds $\int w\,|\sum_jv_je^{2\pi iy_ju}|^2$ by a
multiple of $\sum_j|v_j|^2$. The inverse half, which gives observability of
vibrating systems, needs a window longer than $1/\theta$ and is not used here.
Secondly, in~\cite[Lemma~3]{K1} the clipped trace was bounded
by a dual certificate, $\tr\varphi_c(G)\ge\tr(BG)-\frac14\tr B^2$ for every
$B\preceq2cI$. On a separated block this linear matrix inequality is inactive:
the optimal multiplier $B=2G$ is admissible and the bound is the identity
$\tr\varphi_c(G_{SS})=\tr G_{SS}^2$ used above, with nothing lost. The multiplier
chosen in~\cite[Theorem~2]{K1} is built from the matrix $(K(y_i-y_j))-I$ without
the labels, cut off at the level $\tau$; this is where the weights $d_id_j$
and the energy above the level $1+\tau$ were given up. Thirdly,
\eqref{eq:local} is a dissipation inequality in the sense of
Willems~\cite{Wil72}. Let the state of a frame be
$\sigma=(g_0,\dots,g_4;d_0,\dots,d_5)$ and let $\sigma'$ be the state of the next frame.
Then~\eqref{eq:local} says
\[
 \underbrace{\alpha\,(y_6-y_0)+\sum_{i<j}W^{d_id_j}_{ij}K(y_j-y_i)^2}_{\text{cost of the frame}}
 \ \ge\ \underbrace{\sum_i\kappa_{i,d_i}}_{\text{reward}}\ +\ \Phi(\sigma')-\Phi(\sigma),
\]
so $\Phi$ is a storage function for the supply rate ``cost minus reward'',
and Lemma~\ref{lem:transfer} is the summation along the line, which leaves a
boundary term. In another language the correction is a coboundary. The
certificate used in~\cite{K1} has $\Phi=0$.
\end{remark}

\section{Proof of Theorem~\ref{thm:main}}\label{sec:proof}

\emph{The operator.} The construction is that of~\cite[Section~2]{Lam}, in the
form of~\cite[Section~3]{K1}, whose notation we keep. Let $f\in L^2(I)$ be an even probability density on $I$ with
$\sqrt f$ smooth on the interior of $I$. Let $\chi_\varepsilon$ be smooth even
cutoffs with $0\le\chi_\varepsilon\le1$ that are supported inside $I$ and tend
to $\mathbf 1_I$, put $m_\varepsilon=\int f\chi_\varepsilon^2\to1$ and
$f_\varepsilon=f\chi_\varepsilon^2/m_\varepsilon=\eta_\varepsilon^2$. Here
$\eta_\varepsilon$ is real, even, smooth and supported inside $I$, and
$\int f_\varepsilon=1$. Let $K_\varepsilon=\hat f_\varepsilon$. Thus
\begin{equation}\label{eq:smooth}
 f_\varepsilon\le f/m_\varepsilon,\qquad
 d_\varepsilon:=\|f_\varepsilon-f\|_1\to0,\qquad
 |K_\varepsilon(t)-K(t)|\le d_\varepsilon\ \text{ for all real }t .
\end{equation}
Let $L=\log T$ and $z_\rho=i(\rho-\frac12)L/(2\pi)$ for $0<\gamma\le T$. Zeros
on the line give real $z_\rho$. A zero off the line pairs with $1-\bar\rho$,
which has the same multiplicity and the point $\bar z_\rho$. Put
$F_z(u)=\eta_\varepsilon(u)e^{-2\pi iuz}$. With inner products conjugate-linear
in the first slot, $\langle F_z,F_w\rangle=K_\varepsilon(w-\bar z)$. The
operator $A=\sum_z\nu_zF_z\langle F_{\bar z},\cdot\rangle$, where the sum is
over the points $z=z_\rho$ and $\nu_z$ is the multiplicity, acts on the
finite-dimensional span of the $F_z$. It is Hermitian, and
\[
 \tr A=\sum_z\nu_zK_\varepsilon(0)=N,\qquad
 \tr A^2=\sum_{z,w}\nu_z\nu_wK_\varepsilon(z-w)^2 ,
\]
using that $K_\varepsilon$ is even.

\begin{proposition}[{\cite[Lemma~5]{BGST}, \cite[Proposition~5]{K1}}]\label{prop:energy}
For fixed $\varepsilon$,
\begin{equation}\label{eq:energy}
 \tr A^2=(C(f_\varepsilon)+o(1))N\quad(T\to\infty),\qquad
 C(f_\varepsilon)\to C(f)\quad(\varepsilon\to0),
\end{equation}
where $C(g)=\int g^2+\iint|s-t|\,g(s)g(t)\,ds\,dt$.
\end{proposition}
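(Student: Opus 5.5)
The plan is to write $\tr A^2$ as a pair sum over zeros of a test function whose Fourier transform has support in $[-1,1]$, and then apply the unconditional form of Montgomery's theorem from~\cite[Lemma~5]{BGST}.

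\emph{Reduction to a pair sum.} Put $g_\varepsilon=f_\varepsilon*f_\varepsilon$. Since $f_\varepsilon=\eta_\varepsilon^2$ is smooth, even and supported in a compact subinterval of $I$, the function $g_\varepsilon$ is smooth, even and supported in a compact subinterval of $(-1,1)$. Its transform is $\hat g_\varepsilon=K_\varepsilon^2$. By the formula for $\tr A^2$ above,
\[
 \tr A^2=\sum_{z,w}\nu_z\nu_w\,\hat g_\varepsilon(z-w),
\]
and $\hat g_\varepsilon$ is entire of exponential type less than $2\pi$. The pair of points $z_\rho,\bar z_\rho$ attached to an off-line zero makes the sum symmetric under $\rho\mapsto1-\bar\rho$. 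The sum can therefore be written as a sum over all pairs $(\rho,\rho')$ of nontrivial zeros of an entire function of $(\rho-\rho')L/(2\pi)$, of the kind that occurs in the explicit formula.

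\emph{Pair correlation.} \cite[Lemma~5]{BGST} is Montgomery's theorem in unconditional form. For a test function $r$ of this kind with $\hat r$ supported in $[-1,1]$, it gives
\[
 \sum_{\rho,\rho'}r\Bigl(\tfrac{(\rho-\rho')L}{2\pi i}\Bigr)=\Bigl(\hat r(0)+\int\hat r(\alpha)\,|\alpha|\,d\alpha+o(1)\Bigr)N .
\]
It is proved through the explicit formula together with the density of zeros, without assuming that the $\gamma$ are real. I will apply it with $r=K_\varepsilon^2$ and $\hat r=g_\varepsilon$. I also need to pass from the sharp cut $0<\gamma\le T$ in the definition of $A$ to the smoothed weights of that lemma. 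For this I will use that $r$ decays rapidly along horizontal strips and that there are $O(L)$ zeros in every window of unit height, so the boundary pairs contribute $o(N)$. The limit is then
\[
 g_\varepsilon(0)+\int g_\varepsilon(\alpha)|\alpha|\,d\alpha
 =\int f_\varepsilon^2+\iint|s-t|f_\varepsilon(s)f_\varepsilon(t)\,ds\,dt=C(f_\varepsilon).
\]

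\emph{Continuity in $\varepsilon$.} By~\eqref{eq:smooth} we have $f_\varepsilon\le f/m_\varepsilon$ with $m_\varepsilon\to1$, and $f_\varepsilon\to f$ pointwise on the interior of $I$. Since $f\in L^2(I)$, dominated convergence gives $f_\varepsilon\to f$ in $L^2$ and in $L^1$. Hence $\int f_\varepsilon^2\to\int f^2$. The double integral converges too, because $|s-t|\le1$ on $I\times I$ and $\|f_\varepsilon\otimes f_\varepsilon-f\otimes f\|_1\le2d_\varepsilon$.

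\emph{Main obstacle.} The hardest step is justifying the use of~\cite[Lemma~5]{BGST} when zeros lie off the line. For such zeros $\operatorname{Im}z_\rho$ can be as large as $L/(4\pi)$, and $K_\varepsilon^2$ grows there. The plan is to rely on the fact that in~\cite{BGST} the test function enters only through the explicit formula, which is stated for all zeros. I will check that our $r$, with transform in $C_c^\infty(-1,1)$, satisfies the growth hypotheses there uniformly in the strip. This is exactly~\cite[Proposition~5]{K1}, which I would cite for this point.
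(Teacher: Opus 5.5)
There is a genuine gap: you have dropped the weight in \cite[Lemma~5]{BGST}. Like Montgomery's theorem, that lemma evaluates the weighted sum $\sum_{\rho,\rho'}W(\rho-\rho')\,r(\cdot)$ with $W(u)=4/(4-u^2)$, over the sharp range $0<\gamma,\gamma'\le T$. The weight sits on the differences $\rho-\rho'$; it is not a smoothing in height. So your argument gives $\sum W(\rho-\rho')K_\varepsilon(z_\rho-z_{\rho'})^2=(C(f_\varepsilon)+o(1))N$, which is not $\tr A^2$.

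The weight cannot be removed term by term. For zeros on the line, $1-W=O(x^2/L^2)$ and $K_\varepsilon^2$ decays fast, so those pairs are harmless. Now take an off-line zero $\rho=\beta+i\gamma$ and its partner $\rho'=1-\bar\rho$. Then $\rho-\rho'=2\beta-1$, so $W-1\asymp(2\beta-1)^2$ is not small. At the same time $K_\varepsilon(z_\rho-z_{\rho'})^2=K_\varepsilon\bigl(i(2\beta-1)L/(2\pi)\bigr)^2$ can be as large as a positive power of $T$. Nothing unconditional makes these differences sum to $o(N)$.

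Your ``boundary pairs'' step fails for the same reason: decay of $K_\varepsilon^2$ along horizontal lines is not uniform in a strip of width $\asymp L$. That step is also unnecessary, since the lemma already uses the sharp cut-off. The growth of $K_\varepsilon$ off the real axis, which you single out as the main obstacle, does not obstruct applying the lemma either. The weighted pair sum is exactly the integral of $\hat r$ against a finite exponential sum, the weighted form factor, so only the support and regularity of $\hat r$ matter. Deferring this point to \cite[Proposition~5]{K1} does not close the gap, since that is the statement being proved.

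The missing idea is an exact removal of the weight, which is what the paper does. With $R_\varepsilon=f_\varepsilon*f_\varepsilon$,
\[
 \bigl(R_\varepsilon-R_\varepsilon''/(4L^2)\bigr)^{\wedge}(x)=\Bigl(1+\frac{\pi^2x^2}{L^2}\Bigr)K_\varepsilon(x)^2 ,
\]
and at $x=z_\rho-z_{\rho'}=i(\rho-\rho')L/(2\pi)$ the factor equals $1-(\rho-\rho')^2/4=1/W(\rho-\rho')$. Hence $\tr A^2$ is exactly the weighted sum for $R_\varepsilon$ minus $1/(4L^2)$ times the weighted sum for $R_\varepsilon''$. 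Both are admissible fixed test functions, so the lemma applies to each. The first gives $\bigl(R_\varepsilon(0)+\int|\alpha|R_\varepsilon(\alpha)\,d\alpha+o(1)\bigr)N=(C(f_\varepsilon)+o(1))N$, and the second contributes $O_\varepsilon(N/L^2)$. With this in place the rest of your proposal goes through, including the continuity argument for $C(f_\varepsilon)\to C(f)$, which is fine as written.
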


\begin{proof}
The unconditional Montgomery theorem~\cite[Lemma~5]{BGST} applies to real,
even, integrable test functions supported in $[-1,1]$ and Lipschitz at $0$,
and it sums over all zeros, on or off the line, with multiplicity. It carries
the weight $w(\rho-\rho')=4/(4-(\rho-\rho')^2)$. With
$R_\varepsilon=f_\varepsilon*f_\varepsilon$,
\[
 \bigl(R_\varepsilon-R_\varepsilon''/(4L^2)\bigr)^{\wedge}(x)
 =\Bigl(1+\frac{\pi^2x^2}{L^2}\Bigr)K_\varepsilon(x)^2 .
\]
At $x=z_\rho-z_{\rho'}$ the factor is $1-(\rho-\rho')^2/4=1/w(\rho-\rho')$.
Both $R_\varepsilon$ and $R_\varepsilon''$ are admissible fixed test
functions, so applying the theorem to both removes the weight exactly;
in~\cite[Section~3]{Lam} the weight is removed by a modification of the test
function as well. The
term with $R_\varepsilon$ gives
$\bigl(R_\varepsilon(0)+\int|\alpha|R_\varepsilon(\alpha)\,d\alpha+o(1)\bigr)N
=(C(f_\varepsilon)+o(1))N$, and the term with $R_\varepsilon''$ contributes
$O_\varepsilon(N/L^2)$. Finally $f_\varepsilon\to f$ in $L^1$ and in $L^2$, so
$C(f_\varepsilon)\to C(f)$.
\end{proof}

\emph{Splitting.} Let $c\ge0$, and let $y_1<\dots<y_l$ be the rescaled heights of the distinct
zeros on the line of multiplicity $d_i\in\{1,2\}$, and let $n_1,n_2$ be the
numbers of each kind. Let $h$ count the zeros on the line of multiplicity at
least three and $k$ the pairs $\{\rho,1-\bar\rho\}$ off the line, so that
$N_d=n_1+n_2+h+2k$. Let $P$ be the part of $A$ coming from the $y_i$. Then
$P=VDV^*$ with $Ve_j=F_{y_j}$, and $U_\varepsilon=V^*V$ has the entries
$K_\varepsilon(y_i-y_j)$. So $P$ has the nonzero spectrum of
$G_\varepsilon=D^{1/2}U_\varepsilon D^{1/2}$. Since $\varphi_c(0)=0$, it
follows that $\tr\varphi_c(P)=\tr\varphi_c(G_\varepsilon)$. Also
$\tr P=n_1+2n_2$. In $Q=A-P$, each zero on the line of multiplicity at least
three gives a positive term of rank one. Each pair off the line gives
$\nu(vw^*+wv^*)=\frac\nu2\bigl((v+w)(v+w)^*-(v-w)(v-w)^*\bigr)$, with $v=F_z$
and $w=F_{\bar z}$, which has at most one positive eigenvalue. The number of
positive eigenvalues is subadditive, so $Q$ has at most $h+k$ of them. The
splitting and this count follow~\cite{AF,Lam}.
Lemma~\ref{lem:threshold} with $p=h+k$ gives
\begin{equation}\label{eq:key}
\begin{split}
 \tr A^2\ \ge\ {}&2cN-(2c-1)n_1-(4c-4)n_2-c^2(h+k)\\
 &+\bigl(\tr\varphi_c(G_\varepsilon)-\tr D^2\bigr).
\end{split}
\end{equation}
Everything now depends on a lower bound for the last term,
$\tr\varphi_c(G_\varepsilon)-\tr D^2$, which we call the \emph{clipped
interaction energy}.

\emph{Certificates.} A triple of real numbers $(x_1,x_2,b)$ with $b\ge0$ is a
\emph{certificate} for $f$ and $c$ if there are a constant $C_2$ and a
function $\omega(\varepsilon)\to0$ such that, for all small $\varepsilon$ and
all labelled sets with $l$ points,
\begin{equation}\label{eq:certificate}
 \tr\varphi_c(G_\varepsilon)-\tr D^2\ \ge\ x_1n_1+x_2n_2-b\,\spn(Y)-\omega(\varepsilon)\,l-C_2 .
\end{equation}

\begin{theorem}[certificate principle]\label{thm:general}
Let $(x_1,x_2,b)$ be a certificate for $f$ and $c$, and put
\begin{align*}
 \mu&=2-2x_1+x_2, & \nu&=2c-3+x_1-x_2,\\
 r_h&=6c-7-c^2+x_1-2x_2, & r_k&=4c-2-c^2-2x_1 .
\end{align*}
If $\mu>0$ and $\nu,r_h,r_k\ge0$, then
\[
 \liminf_{T\to\infty}\frac{N_d(T)}{N(T)}\ \ge\ \frac{3-C(f)+x_2-x_1-b}{2-2x_1+x_2}.
\]
\end{theorem}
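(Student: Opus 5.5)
The plan is to combine three inputs: the lower bound \eqref{eq:key} for $\tr A^2$, the certificate \eqref{eq:certificate} for its last term, and the asymptotic \eqref{eq:energy} for $\tr A^2$. The result is one linear inequality in $n_1,n_2,h,k,N$, which is then rearranged into a lower bound for $N_d$. We fix $\varepsilon$ small, let $T\to\infty$, and only at the end let $\varepsilon\to0$.

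\emph{Step 1: the length of the line.} The rescaled heights $y_i=\gamma_iL/(2\pi)$ lie in $(0,TL/(2\pi)]$. By the Riemann--von Mangoldt formula $N(T)=\frac{T}{2\pi}\log T\,(1+o(1))$, so $\spn(Y)\le(1+o(1))N$. Also $l\le N$. Inserting \eqref{eq:certificate} into \eqref{eq:key} and comparing with \eqref{eq:energy} gives
\[
 (C(f_\varepsilon)+b+\omega(\varepsilon)+o(1))N+C_2\ \ge\ 2cN-a\,n_1-\beta\,n_2-c^2(h+k),
\]
with $a=2c-1-x_1$ and $\beta=4c-4-x_2$.

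\emph{Step 2: eliminating $n_2$ and $n_1$.} Let $M$ be the total multiplicity of the zeros counted by $h$ and $k$. Then $N=n_1+2n_2+M$ and $M\ge3h+2k$. Substitute $n_2=(N-n_1-M)/2$, and use $N_d=n_1+n_2+h+2k$, which gives
\[
 n_1=2N_d-N+M-2(h+2k).
\]
The coefficients then simplify as follows: $a-\beta/2=\mu/2$, $2c-\beta/2+\mu/2=3+x_2-x_1$, and $\mu-\beta=-2\nu$. Up to $o(N)+\omega(\varepsilon)N+(C(f_\varepsilon)-C(f))N+C_2$, this leads to
\[
 (3-C(f)+x_2-x_1-b)N\ \le\ \mu N_d-\nu M+c^2(h+k)-\mu(h+2k).
\]

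\emph{Step 3: signs and limits.} Since $\nu\ge0$, we may replace $-\nu M$ by $-\nu(3h+2k)$. A direct check shows the coefficient of $h$ becomes $c^2-3\nu-\mu=-r_h\le0$, and the coefficient of $k$ becomes $c^2-2\nu-2\mu=-r_k\le0$, so these terms can be dropped. Divide by $\mu>0$, take $\liminf_{T\to\infty}$, and then let $\varepsilon\to0$, using $C(f_\varepsilon)\to C(f)$ and $\omega(\varepsilon)\to0$. This gives the claim.

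The only delicate points are bookkeeping. We must confirm that the certificate applies to the $G_\varepsilon$ built from the smoothed kernel uniformly in $T$; this is how \eqref{eq:certificate} is defined. We must also check that the spread of $Y$ is bounded by $N$ up to $o(N)$ even though zeros off the line are omitted from $Y$; this is immediate since $Y$ only loses points. The main risk is an algebra slip in Step 2, where the conditions $\nu,r_h,r_k\ge0$ must emerge exactly as the signs of the coefficients of $M$, $h$ and $k$.
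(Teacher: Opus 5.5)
Your proposal is correct and follows essentially the paper's proof: insert \eqref{eq:certificate} into \eqref{eq:key}, compare with \eqref{eq:energy} using $\spn(Y)\le N+o(N)$, and bound $(2c-1-x_1)n_1+(4c-4-x_2)n_2+c^2(h+k)$ by $\nu N+\mu N_d$, with $\nu,r_h,r_k\ge0$ controlling the zeros of multiplicity at least three and the pairs off the line. The only difference is bookkeeping: the paper checks that inequality zero by zero using $2c-1-x_1=\nu+\mu$ and $4c-4-x_2=2\nu+\mu$, while you eliminate $n_1,n_2$ through $N=n_1+2n_2+M$ and $N_d=n_1+n_2+h+2k$, which is the same linear identity.
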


\begin{proof}
Insert~\eqref{eq:certificate} into~\eqref{eq:key}:
\[
 \tr A^2\ \ge\ 2cN-(2c-1-x_1)n_1-(4c-4-x_2)n_2-c^2(h+k)-b\,\spn(Y)-\omega(\varepsilon)N-C_2.
\]
Note that $2c-1-x_1=\nu+\mu$ and $4c-4-x_2=2\nu+\mu$: a simple zero on the
line contributes $1$ to $N$ and $1$ to $N_d$, a double zero $2$ and $1$. A
zero on the line of multiplicity $j\ge3$ contributes $j$ to $N$ and $1$ to
$N_d$, and $j\nu+\mu-c^2\ge3\nu+\mu-c^2=r_h\ge0$. A pair off the line of
multiplicity $j\ge1$ contributes $2j$ and $2$, and
$2j\nu+2\mu-c^2\ge2\nu+2\mu-c^2=r_k\ge0$. Hence
\[
 (2c-1-x_1)n_1+(4c-4-x_2)n_2+c^2(h+k)\ \le\ \nu N+\mu N_d .
\]
The points $y_i$ lie in an interval of length $TL/2\pi=N+o(N)$. With
\eqref{eq:energy} this gives
\[
 \mu\,N_d\ \ge\ \bigl(3+x_2-x_1-b-C(f_\varepsilon)-\omega(\varepsilon)\bigr)N-o(N).
\]
Let $T\to\infty$ and then $\varepsilon\to0$.
\end{proof}

The bounds of~\cite{AF,Lam} and of~\cite{K1} are instances, and the bound
of~\cite{Chandru} has the value of the same formula.
\begin{enumerate}
\item \emph{No interaction.} The triple $(0,0,0)$ is a certificate for
every window and every $c\ge2$: by Lemma~\ref{lem:pinch} for the partition into
single points, $\tr\varphi_c(G_\varepsilon)\ge\sum_i\varphi_c(d_i)=\tr D^2$.
With $c=3$ the theorem gives $(3-C(f))/2$, which is $\frac56$ for the flat
window and $0.83625$ for the Montgomery--Taylor window~\cite{AF,Lam}.
\item \emph{Only simple points interact.} The bound $0.83652$
of~\cite[(1.9)]{Chandru}, for the Montgomery--Taylor window, is the value of
the formula of Theorem~\ref{thm:general} at
$(x_1,x_2,b)=(\frac{19}{5000},0,\frac1{500})$. It is proved there by a count
of its own, from a seven-point inequality for simple points.
\item \emph{A certificate that does not see the multiplicities.} In~\cite{K1},
Theorem~2 bounds $\tr\varphi_c(G)-\tr D^2$ below by a spectral function of the
unlabelled Gram matrix, clipped at a level $\tau$, and Section~3 there shows
that this function is at least $a\,l-\beta'\,\spn(Y)$ up to admissible errors.
This is the certificate $(a,a,\beta')$ with $a=\frac{26163}{5900000}$,
$\beta'=\frac{17}{7788}$ and $c=\frac{17}5$; here $a$ is the constant
of~\cite{K1} and $\beta'$ is its constant $\beta$, a letter that has another
meaning in this paper. So $(3-C-\beta')/(2-a)$, with
$C$ replaced by the upper bound for it used in~\cite{K1}, is the constant
of~\cite{K1}.
\item \emph{This paper.} Under the assumptions of Theorem~\ref{thm:gram} and
Lemma~\ref{lem:maj} with $\Lambda<\frac c2$, the triple $(x_1,x_2,6\alpha)$ is
a certificate. Indeed, apply Theorem~\ref{thm:gram} to $U=U_\varepsilon$.
Hypothesis (a) holds with $\eta=d_\varepsilon$ by~\eqref{eq:smooth}. For (b),
apply Lemma~\ref{lem:maj} to $w=m_\varepsilon f_\varepsilon\le f\le g$: on
separated sets $(U_\varepsilon)_{SS}\preceq(\Lambda/m_\varepsilon)I\preceq\frac c2I$
once $\varepsilon$ is small. So~\eqref{eq:certificate} holds with
$\omega(\varepsilon)=C_1d_\varepsilon$ and $C_2=6x_2+\Omega$.
\end{enumerate}

\emph{The window and the constants.} The \emph{first window} is
\[
 f(u)=1+\sum_{j=1}^6a_j\cos(2\pi ju)\qquad(|u|\le\tfrac12),
\]
with
\[
 10^6\,(a_1,\dots,a_6)=(193305,\,-23105,\,29166,\,13952,\,-21009,\,41052).
\]
Then $\int f=1$, $f\ge1-\sum|a_j|>0.678$ on $I$, and
\begin{equation}\label{eq:K}
 K(t)=\sinc t+\sum_{j=1}^6\frac{a_j}2\bigl(\sinc(t-j)+\sinc(t+j)\bigr),
 \qquad\sinc t=\frac{\sin\pi t}{\pi t},\quad\sinc0=1.
\end{equation}
Take $\theta=\frac{17}{20}$ and $c=\frac{67}{20}$.
\begin{itemize}
\item \emph{Majorant.} Let
$g(u)=\gamma_0\bigl(\sinc(\theta(u-\tfrac12))+\sinc(\theta(u+\tfrac12))\bigr)^2$
with $\gamma_0=\frac{607}{1000}$. The function
$\sinc(\theta(u-\frac12))+\sinc(\theta(u+\frac12))$ is the Fourier transform
of $\Psi(\xi)=\frac2\theta\cos(\pi\xi)\mathbf 1_{|\xi|\le\theta/2}$, so $g$ is
the transform of $\gamma_0\Psi*\Psi$, which is continuous and supported in
$[-\theta,\theta]$. By Fourier inversion $\hat g$ vanishes outside
$(-\theta,\theta)$, and
$\Lambda=\hat g(0)=\gamma_0\int\Psi^2=\frac{2\gamma_0}\theta(1+\sinc\theta)=1.67105\ldots<\frac c2=1.675$.
The inequality $f\le g$ on $I$ has margin at least $10^{-4}$ and is proved by
rational interval arithmetic on a subdivision of $[0,\frac12]$.
\item \emph{Close pairs.} $K(t)^2\ge\beta:=\frac{8824337}{134217728}=0.06574\ldots$
for $|t|\le\theta$, from the kernel table of Section~\ref{sec:cert} and
$|K'|\le\pi$. Moreover $2+2\sqrt\beta=2.5129<c$.
\item \emph{Local data.} The rational data of Appendix~\ref{app:data} have
\[
 \alpha=0.006219,\qquad x_1=0.054350,\qquad x_2=0.069433,\qquad\Omega=0.015882 ,
\]
they satisfy~\eqref{eq:budget} and~\eqref{eq:zero}, and the functions $B_r$
are piecewise linear and supported in $[\frac34,\frac{19}8]$. The value of
$\Omega$ is the sum over $r\le4$ of the ranges of $B^{dd'}_{\le r}$ over $t$ and
the three pairs of labels, read off from the knot values. Then
$2\beta>2x_1$, $4\beta>x_1+x_2$ and $8\beta>2x_2$.
\end{itemize}

\begin{proposition}[local inequality; computer-assisted]\label{prop:local}
The data of Appendix~\ref{app:data} are local data for $K$,
$\theta=\frac{17}{20}$ and $\alpha=0.006219$: inequality~\eqref{eq:local}
holds for every frame.
\end{proposition}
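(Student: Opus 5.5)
The plan is a rigorous branch-and-bound verification over the space of frames. A frame is determined by its six gaps $g_0,\dots,g_5\ge\theta$ and its labels $(d_0,\dots,d_6)\in\{1,2\}^7$, so there are $128$ finite label patterns, each with a six-dimensional domain $[\theta,\infty)^6$. Since $y_j-y_i=g_i+\dots+g_{j-1}$, the left side of~\eqref{eq:local} is an explicit function of the gaps: the spread is linear, the overlaps $K(y_j-y_i)^2$ come from the sinc expansion~\eqref{eq:K}, and the corrections $B_r$ are piecewise linear in a single gap. The identities~\eqref{eq:budget} and~\eqref{eq:zero} are finite rational checks on the data of Appendix~\ref{app:data} and are done first and exactly.

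\emph{Reduction to a compact domain.} The right side is at most $x_2<0.07$, and the $B_r$ vanish outside $[\frac34,\frac{19}8]$ and are bounded. Hence once $y_6-y_0$ exceeds $R_0=(x_2+\sum_r\sup|B_r|)/\alpha$, the inequality holds with $K^2\ge0$. This is a crude bound, and it can be refined: if a single gap is large, the frame splits into two subframes with essentially no overlap across the gap. In that case the inequality is to be checked with the cross terms dropped, and $K^2\le 1/(\pi^2t^2)\cdot(1+\sum|a_j|)^2$ controls the error. The aim is to confine every gap to a bounded box $[\theta,R]$, with $R$ of order a few units, and the total spread to at most $R_0$.

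\emph{Branch and bound on the box.} For each label pattern, subdivide the box into sub-boxes. On each sub-box, enclose each $K(s)$ with $s$ ranging over the interval sum of gaps. This uses rational interval arithmetic for $\sin$, or a precomputed kernel table with the Lipschitz bound $|K'|\le\pi$, as is already used for $\beta$. Then bound the left side minus the right side from below. The $B_r$ are evaluated exactly on knot intervals, and sub-boxes are aligned with their knots so that they are linear there. A sub-box is accepted if the lower bound is positive; otherwise it is split along the widest gap. Near-tight configurations will force fine subdivision. To keep the count manageable, I would use first-order (mean-value) enclosures with the derivative of $K^2$ bounded via a table of $K'$, rather than plain interval evaluation. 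I would also exploit the reflection symmetry that reverses the frame, mapping labels and gaps to their mirror images, which halves the work when the data are symmetric.

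\emph{Main obstacle.} The hard step is the cost of the branch-and-bound in six dimensions near the extremal configurations. These are nearly periodic frames with gaps close to the mean spacing, where the certificate was optimised to be tight, with margin perhaps $10^{-5}$. Plain interval enclosures lose too much there. I would first try Taylor-model (affine) enclosures on small boxes around the numerically located minima found by a local optimiser, combined with coarse intervals elsewhere. If the margin is still insufficient, I would slightly perturb the rational data, lowering the $\kappa$'s by a tiny amount within the reported decimals, to create a uniform margin. The output is a program whose return of true is the statement of the proposition; in the Lean development this is recorded as the axiom mentioned in the abstract.
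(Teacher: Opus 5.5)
Your plan is essentially the paper's proof. The shared steps are the reduction to a compact set through the spread charge, a rational table of lower bounds for $K^2$ on cells of width $1/16384$ controlled by $|K'|\le\pi$ and $|K''|\le\pi^2$, bisection of the longest side with the piecewise linear $B_r$ minimised exactly, and affine lower bounds on small boxes. In the paper, $\alpha\sum_r g_r\ge x_2+0.009902$ already confines the total spread to $k_{\max}/H\approx12.76$, so your splitting at a large gap is not needed. The paper also applies a first test that treats all $2^7$ labellings of a box at once, which keeps the search at about $6\cdot10^7$ boxes.

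Your fallback of lowering the $\kappa$'s would prove a statement about different data, not this proposition. It is not needed: the smallest numerical slack is about $10^{-4}$, and the search on the frozen data ends with no failure.
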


The proof is in Section~\ref{sec:cert}.

\emph{Conclusion.} By the fourth instance above, $(x_1,x_2,b)$ with
$b=6\alpha=0.037314$ is a certificate for $f$ and $c$. The conditions of
Theorem~\ref{thm:general} hold: $\mu=1.960733$, $\nu=3.684917$,
$r_h=1.792984$ and $r_k=0.0688$. Direct integration gives
\begin{equation}\label{eq:C}
 C(f)=\frac43+\frac12\sum_ja_j^2+\frac1{\pi^2}\sum_j\Bigl(\frac{(-1)^ja_j}{j^2}-\frac{a_j^2}{4j^2}\Bigr)
 =1.3327049582\ldots<C_0:=\frac{266541}{200000},
\end{equation}
and therefore
\[
 \liminf_{T\to\infty}\frac{N_d(T)}{N(T)}\ \ge\ \frac{3-C_0+x_2-x_1-b}{2-2x_1+x_2}=\frac{1645064}{1960733}. \qed
\]

\begin{remark}\label{rem:c}
The clipping level is confined to a short interval. The step for pairs off
the line needs $r_k\ge0$, that is $c\le2+\sqrt{2-2x_1}=3.3752$, and the
absence of clipping needs $c>2\Lambda=3.3421$. The value of $g$ at the
endpoints is $\gamma_0(1+\sinc\theta)^2=0.8309$, and $f(\frac12)=0.8304$: here
$\Lambda$ is governed by the height of the window at its edge.

The separation $\theta$ is confined in the same way. Two simple points closer
than $\theta$ are not covered by the local inequality and must pay their
rewards by their own overlap, which needs $K(\theta)^2\ge x_1$; this fails for
$\theta>0.8705$. On the other side, $f(\frac12)\le g(\frac12)$ gives
$\Lambda\ge2f(\frac12)/(\theta(1+\sinc\theta))$ for a majorant of the form
used here, and $\Lambda<\frac c2$ then fails for $\theta<0.8267$ when
$c=\frac{67}{20}$, and for $\theta<0.7855$ whatever admissible $c$ is chosen.
The value $\frac{17}{20}$ is a rational number between these limits. The choice matters
little for the constant. The trial states of Section~\ref{sec:six} have all
gaps larger than $1$, so the ceilings of Proposition~\ref{prop:six} hold for
every separation up to $1$: with the first window, no choice of $\theta$ gives
more
than $0.83916$ with frames of seven points.
\end{remark}

\section{The seven-point inequality}\label{sec:cert}

Proposition~\ref{prop:local} is an inequality in six real variables for each
of the $2^7$ labellings. Its proof is a finite search. The proof of record is
the Lean development of Section~\ref{sec:lean}, in which the search is a
function with a soundness theorem; here we describe what it does. The plan of
the search, with a table of lower bounds on grid cells, range minima, a
cutoff for long frames and bisection of the widest gap, is that of the
verifier of~\cite{Ainta}.

\emph{Reduction to a compact set.} The functions $B_r$ are bounded below. The
sum over $r$ of the largest value of $-B^{dd'}_r(t)$ over $t$ and the labels
is $0.009902$. Since $K^2\ge0$, inequality~\eqref{eq:local} holds whenever
$\alpha\sum g_r\ge x_2+0.009902$, in particular when $\sum g_r\ge k_{\max}/H$
with $H=16384$ and $k_{\max}=209009$.

\emph{Kernel table.} For $0\le k\le k_{\max}$ an integer $v_k$ is computed
with $K(t)^2\ge v_k/2^{28}$ on the cell $[k/H,(k+1)/H]$. From~\eqref{eq:K},
$K(t)=\frac{\sin\pi t}{\pi}\bigl(\frac1t+\sum_j(-1)^ja_j\frac t{t^2-j^2}\bigr)$
for non-integral $t$. At the midpoint of the cell, a lower bound for $|K|$ and
an upper bound for $|K'|$ are obtained from Taylor polynomials with rational
arguments and rational bounds for $\pi$. Since $f\ge0$, $\int f=1$ and
$|u|\le\frac12$, one has $|K'|\le\pi$ and $|K''|\le\pi^2$, which controls $K$
on the whole cell.

\emph{Branch and bound.} A box is a product of six intervals of gaps with
endpoints in $\frac1H\mathbb Z$. The search starts from the box
$[13926/H,k_{\max}/H]^6$, where $13926/H<\theta$. On a box each $K^2$ is
bounded below by the minimum of the table over the cells that the
corresponding distance can reach, and the piecewise linear functions are
minimised exactly. Two tests are applied.

The first treats all labellings at once. For a pair $e=(i,j)$ let
$E^{dd'}_e$ be its term $W^{dd'}_{ij}K(y_j-y_i)^2$, plus $B^{dd'}_i(g_i)$ when
$j=i+1$. Let $\ell_e\le E^{11}_e$, $u_e\le E^{12}_e-E^{11}_e$ and
$v_e\le E^{22}_e-E^{11}_e$ be lower bounds on the box, which the table gives
because $W^{11}\le W^{12}$ and $W^{11}\le W^{22}$, and put
$\eta_e=\min(2u_e,v_e)$. With $d_i=1+\epsilon_i$, the three cases give
$2E^{d_id_j}_e\ge2\ell_e+\eta_e(\epsilon_i+\epsilon_j)$. So twice the
difference of the two sides of~\eqref{eq:local} is at least
\[
 2\Bigl(\alpha\sum_rg_r+\sum_e\ell_e-\sum_i\kappa_{i,1}\Bigr)
 +\sum_i\min\Bigl(0,\ \sum_{e\ni i}\eta_e-2(\kappa_{i,2}-\kappa_{i,1})\Bigr),
\]
where $\alpha\sum_rg_r$ is taken at the lower corner of the box, and the box is
accepted if this is nonnegative. Before the tests each box is cut down to
$\sum_rg_r\le k_{\max}/H$, which is where the reduction to a compact set
enters. For boxes of side at most
$512/H$ a second test replaces the minima by affine lower bounds for the
table and for the $B_r$, minimised at a corner of the box, first in the same
combined form and then for each labelling. A box that passes no test is
bisected along its longest side; a box of side $1/H$ that passes no test
would be a failure. The search ends after $60{,}467{,}309$ boxes, of which
$30{,}233{,}655$ are accepted, with no failure.

The coefficients were found by a floating-point search and then frozen as
integers; the search plays no role in the proof. The smallest slack
in~\eqref{eq:local} found numerically is about $10^{-4}$, for seven simple
points with gaps near $1.025$.

\section{Zeros on the critical line}\label{sec:line}

\emph{Why the level changes.} In~\eqref{eq:key} a zero on the line of
multiplicity $j\ge2$ that is put into the remainder is credited with
$2cj-c^2$, and a pair of zeros off the line with $4c-c^2$. For a count of the
zeros that are simple and on the line, each of these must be at least $2$ per
unit of multiplicity. For the double zero and for the pair this is
$4c-c^2\ge4$, that is, $(c-2)^2\le0$. So $c=2$. Theorem~\ref{thm:main} can
use a larger level, because zeros off the line are distinct, and for
$c=\frac{67}{20}$ a pair is credited with $2.18$, a little more than the
energy $2$ of two simple zeros. The price of the level $2$ is that every block
that contains a double point and a second point with nonzero overlap has an
eigenvalue larger than $2$ and is clipped. At the level $2$ we therefore let
only the simple zeros on the line interact. Their interaction is treated as in
Section~\ref{sec:matrix}.

\emph{Splitting.} Let now $y_1<\dots<y_n$ be the rescaled heights of the zeros
on the line that are simple, so that $n=N_0^s(T)$, let $P$ be the part of $A$
coming from them and $Q=A-P$. The nonzero spectrum of $P$ is that of
$U_\varepsilon=(K_\varepsilon(y_i-y_j))$. The operator $Q$ is the sum of the
terms of the zeros on the line of multiplicity $j\ge2$, which are positive of
rank one with trace $j$, and of the terms of the pairs off the line, which
have at most one positive eigenvalue and trace $2j$, as in
Section~\ref{sec:proof}. So $Q$ has at most $p$ positive eigenvalues, where
$p$ is the number of these zeros and pairs, and $N-n=\tr Q\ge2p$. Lemma~\ref{lem:threshold} with $c=2$
gives
\begin{equation}\label{eq:key2}
 \tr A^2\ \ge\ 4(N-n)-4p+\tr\varphi_2(U_\varepsilon)\ \ge\ 2(N-n)+\tr\varphi_2(U_\varepsilon).
\end{equation}
A pair of real numbers $(x,b)$ with $x<1$ and $b\ge0$ is a \emph{certificate
for the critical line} if there are a constant $C_2$ and a function
$\omega(\varepsilon)\to0$ such that, for all small $\varepsilon$ and all finite
sets $Y$ of $n$ real points,
\begin{equation}\label{eq:certificate2}
 \tr\varphi_2(U_\varepsilon)-n\ \ge\ x\,n-b\,\spn(Y)-\omega(\varepsilon)\,n-C_2 .
\end{equation}

\begin{theorem}[certificate principle for the critical line]\label{thm:line-general}
If $(x,b)$ is a certificate for the critical line, then
\[
 \liminf_{T\to\infty}\frac{N_0^s(T)}{N(T)}\ \ge\ \frac{2-C(f)-b}{1-x}.
\]
\end{theorem}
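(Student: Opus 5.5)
The proof follows that of Theorem~\ref{thm:general}, with \eqref{eq:key2} in place of \eqref{eq:key}. First I would check \eqref{eq:key2} from Lemma~\ref{lem:threshold} with $c=2$ and $P$ the part of $A$ coming from the simple zeros on the line. The lemma gives $\tr A^2\ge 4\tr A-4\tr P+\tr\varphi_2(P)-4p$. Here $\tr A=N$ and $\tr P=n$, and $\tr\varphi_2(P)=\tr\varphi_2(U_\varepsilon)$, since $P=VV^*$ and $U_\varepsilon=V^*V$ share their nonzero spectrum and $\varphi_2(0)=0$. The count $p\le(N-n)/2$ holds because each summand of $Q$ has trace at least $2$ per positive eigenvalue: a zero on the line of multiplicity $j\ge2$ has trace $j\ge2$, and a pair off the line has trace $2j\ge2$. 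This gives $4(N-n)-4p\ge2(N-n)$.

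Next I would insert the certificate \eqref{eq:certificate2}, $\tr\varphi_2(U_\varepsilon)\ge(1+x)n-b\,\spn(Y)-\omega(\varepsilon)n-C_2$. This yields
\[
 \tr A^2\ \ge\ 2N-(1-x)\,n-b\,\spn(Y)-\omega(\varepsilon)\,n-C_2 .
\]
The points $y_i$ lie in an interval of length $TL/2\pi=N+o(N)$, so $\spn(Y)\le N+o(N)$. We also have $n\le N$. By Proposition~\ref{prop:energy}, $\tr A^2=(C(f_\varepsilon)+o(1))N$. Rearranging gives
\[
 (1-x)\,n\ \ge\ \bigl(2-b-C(f_\varepsilon)-\omega(\varepsilon)\bigr)N-o(N).
\]
Since $x<1$, I would divide by $1-x>0$, take $\liminf_{T\to\infty}$ for fixed $\varepsilon$, and then let $\varepsilon\to0$, using $C(f_\varepsilon)\to C(f)$ and $\omega(\varepsilon)\to0$. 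This gives the stated bound, because $n=N_0^s(T)$.

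The only step needing care is the positive-eigenvalue count for $Q$. That count is subadditivity of the number of positive eigenvalues applied to the rank-one and rank-two terms, exactly as in Section~\ref{sec:proof}, together with the trace bound $\tr Q=N-n\ge2p$. The rest is bookkeeping. In particular, the sign of $b\,\spn(Y)$ requires only an upper bound for the spread, and no condition analogous to $\nu,r_h,r_k\ge0$ is needed: the level $c=2$ makes every term in $Q$ contribute at least $2$ per unit of multiplicity automatically.
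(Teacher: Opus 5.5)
Your proposal is correct and follows the paper's proof. You combine \eqref{eq:key2}, which you re-derive from Lemma~\ref{lem:threshold} at $c=2$ with $\tr Q=N-n\ge2p$ exactly as the paper does in the splitting paragraph before the theorem, with \eqref{eq:certificate2}, \eqref{eq:energy} and $\spn(Y)\le N+o(N)$, and then let $T\to\infty$ and $\varepsilon\to0$. The only difference is that you write out $\omega(\varepsilon)n\le\omega(\varepsilon)N$ via $n\le N$ and the division by $1-x>0$, which the paper's one-line proof leaves implicit.
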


\begin{proof}
By \eqref{eq:key2}, \eqref{eq:certificate2}, \eqref{eq:energy} and
$\spn(Y)\le N+o(N)$,
\[
 (C(f_\varepsilon)+o(1))N\ \ge\ 2(N-n)+(1+x)n-bN-\omega(\varepsilon)N-C_2,
\]
that is, $(1-x)\,n\ge(2-C(f_\varepsilon)-b-\omega(\varepsilon))N-o(N)$. Let
$T\to\infty$ and then $\varepsilon\to0$.
\end{proof}

The pair $(0,0)$ is a certificate for every window, by
Lemma~\ref{lem:pinch} for the partition into single points. It gives
$2-C(f)$, which is $\frac23$ for the flat window and $0.67250$ for the
Montgomery--Taylor window~\cite{AF,Lam}. The count of
Theorem~\ref{thm:line-general} is used in~\cite{Ainta} and
in~\cite[(1.8)]{Chandru}, for the Montgomery--Taylor window. The bound
$0.67305$ of~\cite{Chandru} is its value at
$(x,b)=(\frac{19}{5000},\frac1{500})$, and the constant of~\cite{Ainta}, as
quoted in~\cite{Chandru}, is its value at
$(x,b)=(\frac{4997}{1345000},\frac{2680}{1345000})$.

\emph{Frames of eight points.} The next statement is Theorem~\ref{thm:gram}
for simple points at the level $2$. The number seven has no part in the
arguments of Section~\ref{sec:matrix}: a longer frame sees more pairs, and its
local inequality needs a longer search. For the critical line we use
\emph{frames of eight points} $y_0<\dots<y_7$ with gaps
$g_r=y_{r+1}-y_r\ge\theta$, $0\le r\le6$, and no labels. \emph{Local data for
eight points}, for $K$, $\theta$ and a number $\alpha\ge0$, consist of weights
$W_{ij}\ge0$ for $0\le i<j\le7$ with $\sum_{i=0}^{7-s}W_{i,i+s}\le2$ for
$1\le s\le7$, rewards $\kappa_i\ge0$ with sum $x$, and bounded functions $B_r$
of one variable, $0\le r\le6$, with $\sum_{r=0}^6B_r=0$, such that for every
frame of eight points
\begin{equation}\label{eq:local8}
 \alpha\,(y_7-y_0)+\sum_{0\le i<j\le7}W_{ij}K(y_j-y_i)^2+\sum_{r=0}^6B_r(g_r)\ \ge\ x .
\end{equation}
Put $B_{\le r}=\sum_{j\le r}B_j$ and
$\Phi(g_0,\dots,g_5)=\sum_{r=0}^5B_{\le r}(g_r)$, and let $\Omega$ be a bound
for the oscillation of $\Phi$. Let $y_1<\dots<y_m$ be $\theta$-separated, let
$\epsilon\ge0$, and let $e_{ij}=e_{ji}\ge0$ $(0<|i-j|\le7)$ be numbers with
$e_{ij}\ge K(y_i-y_j)^2-\epsilon$. Then
\begin{equation}\label{eq:transfer8}
 \sum_{0<|i-j|\le7}e_{ij}\ \ge\ x\,m-7\alpha\,(y_m-y_1)-7x-\Omega-14\,\epsilon\,m .
\end{equation}
The proof is that of Lemma~\ref{lem:transfer}, with the $m-7$ frames of eight
consecutive points when $m\ge8$: a pair at index distance $s\le7$ has total
weight at most $2$, which is its weight on the left side, and there are at
most $7m$ such pairs; each gap lies in at most seven frames; for each position
exactly seven points are left out; and since $-B_{\le5}=B_6$, the corrections
of a frame are $\Phi(g_0,\dots,g_5)-\Phi(g_1,\dots,g_6)$ and telescope.

\begin{proposition}[Gram bound for simple points]\label{prop:gram2}
Assume local data for eight points, for $K$, $\theta$ and $\alpha$, with
reward sum $x$ and oscillation bound $\Omega$. Assume that
$K(t)^2\ge\beta\ge x$ for $|t|\le\theta$. Then there are
$\eta_0>0$ and $C_1$ with the following property. Let $Y$ be a set of $n$ real
points, let $U$ be a real symmetric $n\times n$ matrix with $U_{ii}=1$ and
$|U_{ij}|\le1$, and let $0\le\eta\le\eta_0$ be such that
\begin{enumerate}
\item[(a)] $|U_{ij}-K(y_i-y_j)|\le\eta$ for all $i,j$, and
\item[(b)] $U_{SS}\preceq2I$ for every $\theta$-separated subset $S$.
\end{enumerate}
Then $\tr\varphi_2(U)-n\ \ge\ x\,n-7\alpha\,\spn(Y)-(7x+\Omega)-C_1\eta\,n$.
\end{proposition}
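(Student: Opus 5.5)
The proof follows that of Theorem~\ref{thm:gram}, with all labels equal to $1$, the level $c=2$, and the eight-point transfer inequality~\eqref{eq:transfer8} in place of Lemma~\ref{lem:transfer}.

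First choose a maximal family of disjoint pairs $\{i,j\}$ with $|y_i-y_j|<\theta$. The set $S$ of unmatched points is then $\theta$-separated, since otherwise the family could be enlarged. Apply Lemma~\ref{lem:pinch}, with the convex function $\varphi_2$, to the partition into these pairs and $S$. This gives $\tr\varphi_2(U)\ge\sum_{\text{pairs}}\tr\varphi_2(U_{BB})+\tr\varphi_2(U_{SS})$.

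\emph{Pair blocks.} A pair block is $\bigl(\begin{smallmatrix}1&u\\u&1\end{smallmatrix}\bigr)$, since $U_{ii}=1$. By (a), $|u|\ge\sqrt\beta-\eta$, and $|u|\le1$ by hypothesis. The eigenvalues are $1\pm u$, both in $[0,2]$. Since $\varphi_2(t)=t^2$ for $t\le2$, we get exactly $\tr\varphi_2=2+2u^2$. This is at least $2+2\beta-4\eta\ge2+2x-4\eta$ by $\beta\ge x$.

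Here the hypotheses $U_{ii}=1$ and $|U_{ij}|\le1$ replace the condition $2+2\sqrt\beta<c$ of Theorem~\ref{thm:gram}. At $c=2$ that condition would fail, but with unit diagonal the larger eigenvalue $1+|u|$ never exceeds $2$. So nothing is clipped, and no smallness of $\eta$ is needed for the pairs. The result is that each pair pays its reward $2x$, up to $O(\eta)$.

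\emph{The separated set.} By (b), $U_{SS}\preceq2I$, so no eigenvalue is clipped. Hence
\[
\tr\varphi_2(U_{SS})=\tr U_{SS}^2=|S|+\sum_{i\ne j\in S}U_{ij}^2 .
\]
Restrict the off-diagonal sum to pairs at index distance at most $7$ within $S$. Apply~\eqref{eq:transfer8} to $S$ with $e_{ij}=U_{ij}^2$ and $\epsilon=3\eta$. This is admissible because $U_{ij}^2\ge K^2-2|K|\eta-\eta^2\ge K^2-3\eta$ for $\eta\le1$, using $|K|\le1$. The restricted sum is then at least
\[
x|S|-7\alpha\,\spn(S)-7x-\Omega-42\eta|S| .
\]

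\emph{Adding up.} Sum the contributions and subtract $n$, which equals $|S|$ plus twice the number of pairs. Then use $\spn(S)\le\spn(Y)$. This gives
\[
\tr\varphi_2(U)-n\ \ge\ x\,n-7\alpha\,\spn(Y)-(7x+\Omega)-C_1\eta\,n
\]
with $C_1=42$ and $\eta_0=1$.

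The only point needing care is the pair step at the critical level $c=2$: the argument there relies on the normalisation $U_{ii}=1$, $|U_{ij}|\le1$ to keep both eigenvalues at most $2$. Everything else is bookkeeping parallel to Theorem~\ref{thm:gram}.
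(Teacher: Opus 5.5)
Your proof is correct and follows the paper's argument step for step. You take a maximal family of close pairs, apply pinching, use (b) so that nothing is clipped on the separated set and \eqref{eq:transfer8} applies with $e_{ij}=U_{ij}^2$ and $\epsilon=3\eta$, and bound each pair block, whose eigenvalues $1\pm u$ are at most $2$, by $2+2u^2$. The only difference is bookkeeping: you bound the loss per pair by $4\eta$ where the paper writes $6\eta$, which is immaterial and leaves $C_1=42$ valid either way.
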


\begin{proof}
Choose a maximal family of disjoint pairs of points of $Y$ at distance less
than $\theta$. The points in no pair form a $\theta$-separated set $S$. By
Lemma~\ref{lem:pinch}, $\tr\varphi_2(U)$ is at least the sum of
$\tr\varphi_2(U_{SS})$ and of the terms of the pairs. On $S$ hypothesis~(b)
says that no eigenvalue exceeds $2$, so $\varphi_2$ is the square there. The
sum of the $U_{ij}^2$ over $i\ne j$ in $S$ is at least its part over the pairs
at index distance at most seven in $S$, and by~\eqref{eq:transfer8} with
$e_{ij}=U_{ij}^2$ and $\epsilon=3\eta$,
\[
 \tr\varphi_2(U_{SS})=|S|+\sum_{i\ne j\in S}U_{ij}^2\ \ge\ |S|+x\,|S|-7\alpha\,\spn(S)-7x-\Omega-42\eta\,|S| .
\]
A pair has the matrix $\bigl(\begin{smallmatrix}1&u\\u&1\end{smallmatrix}\bigr)$
with $|u|\le1$. Its eigenvalues $1\pm u$ are at most $2$, so its term is
$2+2u^2\ge2+2\beta-6\eta\ge2+2x-6\eta$. Add up.
\end{proof}

Under the assumptions of Proposition~\ref{prop:gram2}, and of
Lemma~\ref{lem:maj} with $\Lambda<2$, the pair $(x,7\alpha)$ is therefore a
certificate for the critical line: apply the proposition to $U=U_\varepsilon$,
with $\eta=d_\varepsilon$ in~(a) and with (b) from Lemma~\ref{lem:maj} for
$w=m_\varepsilon f_\varepsilon\le g$. The condition on the majorant is now
$\Lambda<c$ and not $\Lambda<\frac c2$, because no double point is in a block,
and no condition like $2+2\sqrt\beta<c$ is needed, because a pair of simple
points is never clipped at the level $2$.

\emph{The window and the constants.} For Theorem~\ref{thm:line} we take the
\emph{second window},
\begin{gather*}
 f(u)=1+\sum_{j=1}^{10}a'_j\cos(2\pi ju),\\
 \begin{aligned}
 10^6\,(a'_1,\dots,a'_{10})=(&122290,\,-45012,\,16241,\,1887,\,2711,\\
 &3731,\,-5307,\,2982,\,-5153,\,5358).
 \end{aligned}
\end{gather*}
Here $\int f=1$ and $f\ge1-\sum_j|a'_j|>0.789$ on $I$. By~\eqref{eq:C} with ten
terms, $C(f)=1.32794727\ldots<C'_0=\frac{6639737}{5000000}$, so the plain bound is
$2-C(f)=0.67205\ldots$, smaller than that of the Montgomery--Taylor window.
The separation is $\theta=\frac{17}{20}$ as before, and the majorant has the
form used in Section~\ref{sec:proof} with $\gamma_0=\frac{13}{20}$, so that
$\Lambda=1.78942\ldots<2$. The inequality $f\le g$ on $I$ has margin at
least $10^{-4}$ and is proved as in Section~\ref{sec:proof}. On
$[-\theta,\theta]$ we have $K^2\ge\beta=13707941/2^{28}=0.05106\ldots$, the
smallest entry of the kernel table on this interval. The
local data for eight points are in Tables~\ref{tab:line} and~\ref{tab:lineknots} of the appendix: $\alpha=627\cdot10^{-6}$,
the single reward $\kappa_3=x=8722\cdot10^{-6}\le\beta$, and
$\Omega=3882\cdot10^{-6}$.

\begin{proposition}[local inequality for simple points; computer-assisted]\label{prop:local2}
With these data, inequality~\eqref{eq:local8} holds for every frame of eight
points.
\end{proposition}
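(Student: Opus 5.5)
The plan follows the verification of Proposition~\ref{prop:local} in Section~\ref{sec:cert}, adapted to frames of eight points and no labels. Inequality~\eqref{eq:local8} is a statement in seven real variables, the gaps $g_0,\dots,g_6\ge\theta$. It will be proved by a finite branch-and-bound search over boxes of gaps with dyadic endpoints in $\frac1H\mathbb Z$. The search is to be run inside Lean as a function with a soundness theorem, so that the axiom records only that the function returned true.

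\emph{Reduction to a compact set.} Since $K^2\ge0$ and the $B_r$ are bounded below, put $\beta_-$ for the sum over $r$ of $\sup_t(-B_r(t))$, read off from the knot values of Table~\ref{tab:lineknots}. Then~\eqref{eq:local8} holds whenever $\alpha\sum_rg_r\ge x+\beta_-$. This fixes a cutoff $k_{\max}$ with $\sum_r g_r\le k_{\max}/H$ on the remaining region. Because $\alpha=627\cdot10^{-6}$ is small, this region is much longer than for seven points, roughly $(x+\beta_-)/\alpha\approx 20$ mean spacings. Each box is therefore cut down to this simplex before it is tested.

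\emph{Kernel table.} Next I compute integers $v_k$ with $K(t)^2\ge v_k/2^{28}$ on each cell $[k/H,(k+1)/H]$, $0\le k\le k_{\max}$. This is done exactly as for the first window: from~\eqref{eq:K} with ten cosine terms, take Taylor bounds at the cell midpoint with rational enclosures of $\pi$. Then use $|K'|\le\pi$ and $|K''|\le\pi^2$, which hold because $f\ge0$, $\int f=1$ and $f$ is supported in $I$, to control $K$ over the whole cell. On a box, each of the $28$ distances $y_j-y_i=\sum_{i\le r<j}g_r$ ranges over an interval. Its $K^2$ is bounded below by the minimum of the table over the reachable cells, which is obtained from a range-minimum structure. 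The piecewise linear $B_r$ are minimised exactly on the interval of $g_r$. If the lower bound for the left side of~\eqref{eq:local8}, with $\alpha\sum g_r$ at the lower corner, is at least $x$, the box is accepted. For small boxes the minima are replaced by affine lower bounds minimised at a corner. A box that fails is bisected along its widest side, and a failing box of side $1/H$ would refute the data.

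\emph{Main obstacle.} The main obstacle is the size of the search. There are seven dimensions instead of six, and the compact region is long because $\alpha$ is small. The minimum slack sits near the lattice-like configurations with all gaps close to $1$, where the certificate is nearly tight; the ceiling $0.67357$ shows there is little room. I would first try the combined test with affine bounds on boxes of moderate side. I would also exploit symmetry: reversing a frame maps the data to itself if the weights and $B_r$ are chosen symmetric, which would halve the work. If this is not enough, I would refine $H$ only locally near the tight region. The soundness proof in Lean is routine once the arithmetic is rational: each accepted box yields~\eqref{eq:local8} on it, and the boxes cover $[\theta,\infty)^7$ after the reduction.
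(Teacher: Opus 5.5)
Your plan is the paper's proof: the branch-and-bound of Section~\ref{sec:cert} on boxes of seven gaps, with a new kernel table for the second window, the cutoff $\alpha(y_7-y_0)\ge x+0.001294$ (spread $15.975$, so the table reaches $t=16$ rather than your rough $20$), the same tests, and a Lean soundness theorem with a single \texttt{native\_decide} axiom. The one thing a plan cannot supply is the outcome of the run, and that outcome is the substance of the paper's proof: $535{,}332{,}163$ boxes visited, $267{,}666{,}082$ accepted, no failure, with the Lean evaluation split into $96$ separate tasks.
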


The proof is the computation of Section~\ref{sec:cert} with boxes of seven
intervals of gaps in place of six, on a new table of lower bounds for $K^2$,
for the kernel of the second window. The tests applied to a box are the same;
the data are used for all labels, so the inequality that is checked is the
inequality for eight points without labels. The table has $262{,}144$ entries
and extends to $t=16$ instead of $12.76$. The reason is that the charge
$\alpha$ for the spread is small: a frame is trivial only when
$\alpha\,(y_7-y_0)\ge x+0.001294$, that is, when its spread is at least
$15.975$. Here $0.001294$ is the sum over $r$ of the largest value of $-B_r$.
The search visits $535{,}332{,}163$ boxes and accepts $267{,}666{,}082$ of them,
with no failure. The reference programs and their data are in the accompanying
files, and Section~\ref{sec:lean} describes the formal proof.

\begin{proof}[Proof of Theorem~\ref{thm:line}]
By Propositions~\ref{prop:local2} and~\ref{prop:gram2} the pair
$(x,b)=(8722,\,4389)\cdot10^{-6}$ is a certificate for the critical line, and
by Theorem~\ref{thm:line-general}
\[
 \liminf_{T\to\infty}\frac{N_0^s(T)}{N(T)}\ \ge\ \frac{2-C'_0-b}{1-x}=\frac{1669159}{2478195}. \qedhere
\]
\end{proof}

\begin{remark}[why the gain is smaller]
The gain over $2-C(f)$ is $0.0015$, against $0.0054$ for distinct zeros.
At the level $2$ the double zeros do not interact, and the simple zeros have
density about $\frac23$. They can avoid each other: a configuration in which
gaps near $1$ and near $2$ alternate has all distances close to integers,
where $K$ is small. The window has to make $K$ at the integers larger, which
costs in $C(f)$, and this is the balance behind the coefficients $a'_j$.
\end{remark}

\section{Two further counts}\label{sec:further}
The two quantities of Theorem~\ref{thm:further} are bounded by the same
inequalities as before. What changes is the count of the zeros that are not
in the Gram part, and for the second quantity the level at which their energy
is estimated. The preprint~\cite{Chandru} also bounds the zeros of
multiplicity at most two, by $0.83736$ of $N(T)$ and by $0.93919$ of
$N_d(T)$; we do not treat these two counts.

\begin{lemma}[threshold with two levels]\label{lem:threshold2}
Let $P\succeq0$, let $Q$ be Hermitian with at most $p$ positive eigenvalues,
let $A=P+Q$, and let $t\ge c$ be real. Then
$\tr A^2\ge\tr\varphi_c(P)+2t\,(\tr A-\tr P)-t^2p$.
\end{lemma}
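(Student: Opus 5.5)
The plan is to follow the proof of Lemma~\ref{lem:threshold} step by step, with the level $t$ used for the part $Q_+$ and the level $c$ kept for $P$. Write $Z=Q_-$, so that $Q=Q_+-Z$ and $A=(P-Z)+Q_+$. Expanding the square,
\[
 \tr A^2=\tr(P-Z)^2+2\tr\bigl((P-Z)Q_+\bigr)+\tr Q_+^2 .
\]
Since $ZQ_+=0$, the cross term is $2\tr(PQ_+)$. This is nonnegative because $P\succeq0$ and $Q_+\succeq0$. So $\tr A^2\ge\tr(P-Z)^2+\tr Q_+^2$.

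For the second term I use $x^2\ge2tx-t^2$ on each of the at most $p$ positive eigenvalues of $Q_+$. This gives $\tr Q_+^2\ge2t\tr Q_+-t^2p$. Since $\tr A-\tr P=\tr Q=\tr Q_+-\tr Z$, the claim follows from
\[
 \tr(P-Z)^2+2t\tr Z\ \ge\ \tr\varphi_c(P).
\]
The lemma proved earlier (Lemma~\ref{lem:threshold}) already gives this inequality with $2c\tr Z$ in place of $2t\tr Z$. Now $Z\succeq0$ gives $\tr Z\ge0$, and $t\ge c$, so $2t\tr Z\ge2c\tr Z$ and the stronger inequality follows.

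For completeness I would repeat the earlier argument. Put $X=P-cI$, so that $\varphi_c(P)=P^2-X_+^2$. The required inequality with level $c$ becomes $\tr X_+^2-2\tr(XZ)+\tr Z^2\ge0$. This uses $\tr(XZ)\le\tr(X_+Z)$, which holds because $\tr(X_-Z)\ge0$. The left side is then at least $\tr(X_+-Z)^2\ge0$.

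There is no real obstacle. The only point needing care is the bookkeeping: the hypothesis $t\ge c$ is used exactly once, through $\tr Z\ge0$, to raise the coefficient of $\tr Z$ from $2c$ to $2t$. The rest is the previous lemma verbatim.
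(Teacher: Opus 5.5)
Your proof is correct and is essentially the paper's argument. Both use the splitting $A=(P-Z)+Q_+$ with $Z=Q_-$, the tangent bound $\tr Q_+^2\ge2t\tr Q_+-t^2p$, and the inequality $\tr(P-Z)^2\ge\tr\varphi_c(P)-2c\tr Z$ from Lemma~\ref{lem:threshold}, with $t\ge c$ entering once through $(2t-2c)\tr Z\ge0$.
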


\begin{proof}
In the proof of Lemma~\ref{lem:threshold}, with $Z=Q_-$, we have
$\tr A^2\ge\tr(P-Z)^2+\tr Q_+^2$ and $\tr(P-Z)^2\ge\tr\varphi_c(P)-2c\tr Z$.
Now $\tr Q_+^2\ge2t\tr Q_+-t^2p$ and $\tr Q_+=\tr Q+\tr Z$. Hence
$\tr A^2\ge\tr\varphi_c(P)+2t\tr Q+(2t-2c)\tr Z-t^2p$, and $\tr Z\ge0$.
\end{proof}

For $t=c$ this is Lemma~\ref{lem:threshold}. The level $c$ decides how much of
the interaction inside $P$ is kept. The level $t$ sets the price of the zeros
in $Q$: a term of $Q$ with one positive eigenvalue and trace $u$ is credited
with the energy $2tu-t^2$, the tangent at $t$ to its least possible energy
$u^2$.

\begin{theorem}[two further counts]\label{thm:further-general}
\begin{enumerate}
\item[(a)] If $(x,b)$ is a certificate for the critical line with $x\ge0$,
then
\[
 \liminf_{T\to\infty}\frac{N_s(T)+N_0(T)}{2N(T)}\ \ge\ \frac{1+s}2,
 \qquad s=\frac{2-C(f)-b}{1-x}.
\]
\item[(b)] Let $(x_1,x_2,b)$ be a certificate for $f$ and $c$ with
$2-2x_1+x_2\ge0$, let $t\ge c$ be such that $4t-t^2\ge2+2x_1$, and put
$e_0=2t-\frac{t^2}4$. If $e_0>1+x_1$, then
\[
 \liminf_{T\to\infty}\frac{N_{s\cup0}(T)}{N(T)}\ \ge\ \frac{e_0-C(f)-b}{e_0-1-x_1}.
\]
\item[(c)] Let $(x,b)$ be a certificate for the critical line with $x\ge0$,
let $t\ge2$ be such that $4t-t^2\ge2+2x$, and put $e_0=2t-\frac{t^2}4$. If
$e_0>1+x$, then
\[
 \liminf_{T\to\infty}\frac{N_{s\cup0}(T)}{N(T)}\ \ge\ \frac{e_0-C(f)-b}{e_0-1-x}.
\]
\end{enumerate}
\end{theorem}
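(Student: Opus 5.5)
The three parts are proved alike. For each part we split $A=P+Q$ as in Sections~\ref{sec:proof} and~\ref{sec:line} and apply Lemma~\ref{lem:threshold} (part (a)) or Lemma~\ref{lem:threshold2} (parts (b), (c)). We insert the certificate for the Gram part and the energy \eqref{eq:energy}, using $\spn(Y)\le N+o(N)$. Then we check, one kind of zero at a time, a linear inequality between the energy credited to a zero and its contribution to the count in question. This is the same pattern as the proof of Theorem~\ref{thm:general}, with a different target count. In each case the target has the form $C(f)N\ge e\,N-\lambda M-bN-o(N)$, where $M$ is the count. It suffices that every item of trace $u$ and contribution $m$ to $M$ is credited at least $e\,u-\lambda m$.

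\emph{Part (a).} Take $P$ from the simple zeros on the line, as in Section~\ref{sec:line}, so that $n=N_0^s$, and put $S=N_s+N_0$. Lemma~\ref{lem:threshold} with $c=2$ credits each term of $Q$ of trace $u$ with $4u-4$. The certificate \eqref{eq:certificate2} credits each simple zero on the line with $1+x$, up to $-b\,\spn(Y)$ and negligible terms. We aim for $C(f)N\ge 3N-S+x\,n-bN-o(N)$, so the requirement per item is credit $\ge 3u-m$.
\begin{itemize}
\item A simple zero on the line has $u=1$, $m=2$ and credit $1+x\ge1$.
\item A zero on the line of multiplicity $j\ge2$ has $u=j$, $m=j$ and credit $4j-4\ge2j$.
\item A pair off the line of multiplicity $1$ has $u=2$, $m=2$ and credit $4\ge4$.
\item A pair off the line of multiplicity $j\ge2$ has $u=2j$, $m=0$ and credit $8j-4\ge6j$.
\end{itemize}
Hence $S\ge(3-C(f)-b)N+x\,n-o(N)$. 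Since $x\ge0$, Theorem~\ref{thm:line-general} gives $\liminf n/N\ge s$, and so $\liminf S/N\ge 3-C(f)-b+xs$. Since $2-C(f)-b=s(1-x)$, this equals $1+s$, and dividing by $2$ gives (a).

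\emph{Part (b).} Take $P$ from the zeros on the line of multiplicity at most two, as in Section~\ref{sec:proof}. Apply Lemma~\ref{lem:threshold2} with level $c$ for $P$ and level $t\ge c$ for $Q$. This gives $\tr A^2\ge\tr\varphi_c(G_\varepsilon)+2t(N-n_1-2n_2)-t^2p$, into which we insert \eqref{eq:certificate}. The target is $e=e_0$ and $\lambda=e_0-1-x_1$, with $M=N_{s\cup0}$, so the requirement per item is credit $\ge e_0u-\lambda m$.
\begin{itemize}
\item A simple zero on the line has credit $1+x_1$, with equality.
\item A double zero on the line has credit $4+x_2$. The requirement is $2+2x_1$, which is the hypothesis $2-2x_1+x_2\ge0$.
\item A zero on the line of multiplicity $j\ge3$ has credit $2tj-t^2$ and requirement $(1+x_1)j$. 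This follows from $2t-1-x_1\ge t^2/2$, which is the hypothesis $4t-t^2\ge2+2x_1$, and from $j\ge3$.
\item A simple pair off the line has credit $4t-t^2$ and requirement $2+2x_1$, which is again that hypothesis.
\item A pair off the line of multiplicity $j\ge2$ has $m=0$ and credit $4tj-t^2$. The requirement is $2je_0=4tj-jt^2/2$, which holds since $j\ge2$. This is where the choice $e_0=2t-t^2/4$ comes from.
\end{itemize}
Summing, $(e_0-1-x_1)M\ge(e_0-C(f_\varepsilon)-b-\omega(\varepsilon))N-o(N)$. We divide by $e_0-1-x_1>0$, let $T\to\infty$ and then $\varepsilon\to0$.

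\emph{Part (c).} This is part (b) with $c=2$ and $P$ from the simple zeros on the line only, so the double zeros on the line move into $Q$. Such a zero has trace $2$, contribution $2$ and credit $4t-t^2$, and the requirement is $2+2x$, which is the hypothesis. The other items are checked as in (b) with $x_1$ replaced by $x$.

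The only point needing care is the bookkeeping of the contributions $m$ to each count. The step I expect to be most delicate is checking that the single choice $e_0=2t-t^2/4$ makes the multiple pairs off the line and the simple pairs off the line admissible at the same time. Once the per-item inequalities are listed, each follows from the stated hypotheses by the short computations above.
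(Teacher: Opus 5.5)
Your proposal is correct and follows the paper's proof. It uses the same splittings, Lemma~\ref{lem:threshold} with $c=2$ in (a) and Lemma~\ref{lem:threshold2} in (b) and (c), and the same inequalities for each kind of zero, which the paper aggregates (for example $4(r+k_1+k_2)\le2D+4k_1+2M_2$ and $2M_2=N-n-D-2k_1$) where you check them item by item. The only thing to make explicit is that in (a) a simple zero on the line is credited exactly $(3u-m)+x$, not merely at least $3u-m$; that surplus is where the term $x\,n$ you keep comes from.
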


\begin{proof}
(a) We use the splitting of Section~\ref{sec:line}. Let $r$ be the number of
zeros on the line of multiplicity at least two and $D$ the sum of their
multiplicities. Let $k_1$ be the number of pairs off the line of multiplicity
one, $k_2$ the number of pairs off the line of multiplicity at least two, and
$M_2$ the sum of the multiplicities of the latter. Then $D\ge2r$,
$M_2\ge2k_2$,
\[
 N=n+D+2k_1+2M_2,\qquad N_s=n+2k_1,\qquad N_0=n+D,
\]
and $Q$ has at most $p=r+k_1+k_2$ positive eigenvalues. The first inequality
of~\eqref{eq:key2}, \eqref{eq:certificate2} and~\eqref{eq:energy} give
\[
 (C(f_\varepsilon)+o(1))N\ \ge\ 4(N-n)-4(r+k_1+k_2)+(1+x)n-bN-\omega(\varepsilon)N-C_2 .
\]
Here $4(r+k_1+k_2)\le2D+4k_1+2M_2$ and $2M_2=N-n-D-2k_1$, so the right side
is at least $3N-(2-x)n-D-2k_1-bN-\omega(\varepsilon)N-C_2$. Therefore
\[
 N_s+N_0=2n+D+2k_1\ \ge\ \bigl(3-C(f_\varepsilon)-b-\omega(\varepsilon)\bigr)N+x\,n-o(N),
\]
and $n\ge(s-o(1))N$ by Theorem~\ref{thm:line-general}, after $T\to\infty$ and
$\varepsilon\to0$. Finally $3-C(f)-b+xs=1+s$.

(b) We use the splitting of Section~\ref{sec:proof}. Let $H$ be the sum of the
multiplicities of the $h$ zeros on the line of multiplicity at least three,
and let $k_1$, $k_2$ and $M_2$ be as in~(a), so that $k=k_1+k_2$,
\[
 N=n_1+2n_2+H+2k_1+2M_2,\qquad N_{s\cup0}=N-2M_2,
\]
$H\ge3h$ and $M_2\ge2k_2$. By Lemma~\ref{lem:threshold2} with $p=h+k$,
by~\eqref{eq:certificate} and by $\tr P=n_1+2n_2$,
\begin{align*}
 \tr A^2&+b\,\spn(Y)+\omega(\varepsilon)N+C_2\\
 &\ge(1+x_1)n_1+(4+x_2)n_2+(2tH-t^2h)+(4t-t^2)k_1+(4tM_2-t^2k_2).
\end{align*}
Each term on the right is at least $1+x_1$ times its share of the
multiplicity, except the last: $4+x_2\ge2(1+x_1)$ by assumption;
$2tH-t^2h\ge(2t-\frac{t^2}3)H$ and
$2t-\frac{t^2}3\ge\frac12(4t-t^2)\ge1+x_1$; and $4t-t^2\ge2(1+x_1)$. The last
term is at least $(4t-\frac{t^2}2)M_2=e_0\cdot2M_2$. With~\eqref{eq:energy}
and $\spn(Y)\le N+o(N)$,
\[
 \bigl(C(f_\varepsilon)+b+\omega(\varepsilon)+o(1)\bigr)N\ \ge\ (1+x_1)(N-2M_2)+e_0\cdot2M_2 ,
\]
so $(e_0-1-x_1)\,2M_2\le\bigl(C(f_\varepsilon)+b+\omega(\varepsilon)-1-x_1+o(1)\bigr)N$,
and $N_{s\cup0}=N-2M_2$. Let $T\to\infty$ and then $\varepsilon\to0$.

(c) We use the splitting of Section~\ref{sec:line} and the notation of~(a).
Now $N_{s\cup0}=n+D+2k_1=N-2M_2$. By Lemma~\ref{lem:threshold2} with $c=2$
and $p=r+k_1+k_2$, by~\eqref{eq:certificate2} and by $\tr P=n$,
\[
 \tr A^2+b\,\spn(Y)+\omega(\varepsilon)N+C_2
 \ \ge\ (1+x)n+(2tD-t^2r)+(4t-t^2)k_1+(4tM_2-t^2k_2).
\]
Here $2tD-t^2r\ge(2t-\frac{t^2}2)D\ge(1+x)D$, because $D\ge2r$ and
$4t-t^2\ge2+2x$, and $(4t-t^2)k_1\ge(1+x)\,2k_1$ for the same reason. The last
term is at least $e_0\cdot2M_2$, as in~(b). Hence
$\bigl(C(f_\varepsilon)+b+\omega(\varepsilon)+o(1)\bigr)N\ge(1+x)(N-2M_2)+e_0\cdot2M_2$,
and the proof ends as in~(b).
\end{proof}

The bounds of~\cite{Lam} are the case without interaction. With the pair
$(0,0)$, part~(a) gives $(3-C(f))/2$, which is $0.83625$ for the
Montgomery--Taylor window. With the triple $(0,0,0)$, $c=2$ and
$t=2+\sqrt2$, part~(b) gives
$(5+2\sqrt2-2C(f))/(3+2\sqrt2)$, which is $0.88762$ for that
window~\cite[(2.7)]{Lam}. For $t=2+\sqrt{2-2x_1}$, the largest level with
$4t-t^2\ge2+2x_1$, the bound of part~(b) is
\[
 1-\frac{2\,(C(f)+b-1-x_1)}{3-x_1+2\sqrt{2-2x_1}} .
\]
This is the formula of~\cite[(1.10)]{Chandru}, where it is proved for a
certificate for simple points, as in part~(c); the value $0.88855$ stated
there is its value
at $(x_1,b)=(\frac7{400},\frac3{250})$ for a window with $C(f)=1.32792$.
The choice of a level adapted to the count is made there as well. What is
new in part~(b) is the certificate with labels, and in
Theorems~\ref{thm:further} and~\ref{thm:third} the values.

\begin{proof}[Proof of Theorem~\ref{thm:further}]
For the first bound take the certificate $(x,b)=(8722,\,4389)\cdot10^{-6}$ of
the second window in part~(a): $s\ge s_0$, because $C(f)<C'_0$. For the second bound take the
certificate $(x_1,x_2,b)=(54350,\,69433,\,37314)\cdot10^{-6}$ of the first
window, $c=\frac{67}{20}$ and $t=\frac{27}8$ in part~(b). Then $t\ge c$,
$4t-t^2=2.109375\ge2.1087=2+2x_1$ and $e_0=\frac{999}{256}$, and with the
bound $C_0=1.332705$ for $C(f)$ the quotient is $\frac{3376433}{3797325}$.
\end{proof}

\emph{The third window.} For Theorem~\ref{thm:third} we take the \emph{third
window},
\begin{gather*}
 f(u)=1+\sum_{j=1}^6a''_j\cos(2\pi ju),\\
 10^6\,(a''_1,\dots,a''_6)=(200566,\,19,\,-9358,\,9283,\,-6746,\,8170).
\end{gather*}
Here $\int f=1$ and $f\ge1-\sum_j|a''_j|>0.765$ on $I$. By~\eqref{eq:C},
$C(f)=1.33246363\ldots<C''_0=\frac{6662319}{5000000}$. The separation is
$\theta=\frac{17}{20}$ and the clipping level is $2$, as in
Section~\ref{sec:line}. The majorant has the form used in
Section~\ref{sec:proof} with $\gamma_0=\frac{61}{100}$, so that
$\Lambda=1.67931\ldots<2$, and the inequality $f\le g$ on $I$ has margin at
least $10^{-4}$. On $[-\theta,\theta]$ we have
$K^2\ge\beta=4483379/2^{26}=0.06680\ldots$, the smallest entry of the kernel
table on this interval. The local data are in Tables~\ref{tab:third}
and~\ref{tab:thirdknots} of the appendix: $\alpha=6212\cdot10^{-6}$, the
single reward $\kappa_3=x=54525\cdot10^{-6}\le\beta$, and
$\Omega=75420\cdot10^{-6}$.

\begin{proposition}[local inequality for the third window; computer-assisted]\label{prop:local3}
With these data, used for all labels, inequality~\eqref{eq:local} holds for
every frame.
\end{proposition}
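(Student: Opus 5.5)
The plan is to prove Proposition~\ref{prop:local3} by the finite search of Section~\ref{sec:cert}, applied to the kernel of the third window. The data carry no labels, with the single reward $\kappa_3=x$ for every labelling. So the inequality to be checked is a single inequality in the six gaps $g_0,\dots,g_5\ge\theta$. It reads
\[
 \alpha\sum_r g_r+\sum_{i<j}W_{ij}K(y_j-y_i)^2+\sum_r B_r(g_r)\ \ge\ x .
\]
Because the weights do not depend on the labels, the labelled form~\eqref{eq:local} follows at once for all $2^7$ labellings. Before the search, I check the structural conditions exactly in rational arithmetic. These are the budget~\eqref{eq:budget} with $2dd'\ge2$, the zero-sum condition~\eqref{eq:zero} at every knot of the piecewise linear $B_r$, the value of $\Omega$ from the knot ranges of $B_{\le r}$, and $x\le\beta$.

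\emph{Compact reduction and kernel table.} Let $\delta$ be the sum over $r$ of $\max_t(-B_r(t))$, read off from the knots. Since $K^2\ge0$, the inequality is automatic once $\alpha\sum g_r\ge x+\delta$. This bounds the spread by $(x+\delta)/\alpha$, which is roughly $9$ to $21$ depending on $\delta$, given $\alpha=0.006212$, $x=0.054525$ and $\Omega=0.0754$. I then build a table of integers $v_k$ with $K(t)^2\ge v_k/2^{26}$ on the cells $[k/H,(k+1)/H]$ up to that spread. For this I use the closed form~\eqref{eq:K} with the coefficients $a''_j$, Taylor bounds at cell midpoints with rational enclosures of $\pi$, and $|K'|\le\pi$, $|K''|\le\pi^2$ to control each whole cell.

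\emph{Branch and bound.} Start from the box $[\lfloor\theta H\rfloor/H,k_{\max}/H]^6$ and cut each box to $\sum g_r\le k_{\max}/H$. On a box I bound each $K(y_j-y_i)^2$ below by the table minimum over the reachable range of distances, and minimise the $B_r$ exactly on their intervals. The box is accepted if the resulting lower bound for the left side minus $x$ is nonnegative. For small boxes I switch to the affine test, using affine minorants of the table and of the $B_r$ minimised at a corner. Otherwise I bisect along the longest side. The proof is complete if every leaf is accepted before reaching side $1/H$. As in Section~\ref{sec:lean}, the run is recorded as a Lean function with a soundness theorem, and one axiom asserts that it returned true.

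\emph{Main obstacle.} The hard part is the size and the tightness of the search near the extremal configurations, in particular near-lattice frames with gaps around $1$, where $K$ is small and the slack is likely about $10^{-4}$. The large oscillation $\Omega$ and the small charge $\alpha$ make the compact region long and the correction terms large. If the search stalls there, I would first refine the table resolution $H$ locally and apply the affine test earlier. Only as a last resort would I perturb the frozen rational data slightly to increase slack, which would change the constant.
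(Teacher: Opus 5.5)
Your plan is the paper's proof. The label-independent data reduce \eqref{eq:local} to one inequality in the six gaps, which is verified by the branch-and-bound search of Section~\ref{sec:cert} on a kernel table for the third window; the paper's table has $262{,}144$ entries, frames with spread at least $14.927$ (from $\delta=0.038198$) are trivial, and the run visits $572{,}959{,}453$ boxes with no failure, recorded in Lean by one \texttt{native\_decide} axiom. The real content beyond your description is that this run actually succeeds, and your loose estimates of the spread cutoff and of the slack (the paper finds about $10^{-5}$, not $10^{-4}$) do not affect the argument.
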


The proof is once more the computation of Section~\ref{sec:cert}, on a table
of $262{,}144$ lower bounds for $K^2$ for the kernel of the third window. A
frame is trivial when $\alpha\,(y_6-y_0)\ge x+0.038198$, that is, when its
spread is at least $14.927$. The search visits $572{,}959{,}453$ boxes and
accepts $286{,}479{,}727$ of them, with no failure.

\begin{proof}[Proof of Theorem~\ref{thm:third}]
By Propositions~\ref{prop:local3} and~\ref{prop:gram2} the pair
$(x,b)=(54525,\,37272)\cdot10^{-6}$ is a certificate for the critical line,
for the third window. Take $t=\frac{27}8$ in part~(c) of
Theorem~\ref{thm:further-general}. Then $4t-t^2=2.109375\ge2.10905=2+2x$ and
$e_0=\frac{999}{256}$, and with the bound $C''_0$ for $C(f)$ the quotient is
$\frac{50652159}{56956375}$.
\end{proof}

\begin{remark}[which certificate for which count]
For the average the level must be $2$: at the level $\frac{67}{20}$ a simple
pair off the line is credited with $2.18$ only, and it counts as much as a
double zero on the line, which costs $4$. For the zeros that are simple or on
the line the roles are different. The cheap zeros all count in full, so what
matters is the price $e_0$ per unit of the pairs off the line of multiplicity
at least two, and the level $t$ is chosen as large as the condition
$4t-t^2\ge2+2x_1$ allows. The objective of parts~(b) and~(c) is not that of
Theorem~\ref{thm:main} or of Theorem~\ref{thm:line}. The third window and its
local data were found for the objective of part~(c) at the level
$t=\frac{27}8$, which allows $x\le\frac7{128}$. The gain over the first
window is $0.00015$.
\end{remark}

\section{Structure and limits of the method}\label{sec:limits}

This section bounds from above what the method can give.
Table~\ref{tab:ceilings} collects the results for distinct zeros. The two
ceilings for the first window are formalised (Proposition~\ref{prop:six});
those for the two earlier windows (Proposition~\ref{prop:ceiling}) and the
last line (Corollary~\ref{cor:allwindows}) are certified by ball arithmetic.
The ceilings for the critical line are in
Proposition~\ref{prop:line-ceiling} and Corollary~\ref{cor:allwindows}.

\begin{table}
\caption{Proved lower bounds for $\liminf N_d/N$ and ceilings for the method.
Lower bounds are rounded down and ceilings up. The last line is for windows
with $V(f)\le10\,B(f)$ (Section~\ref{sec:allwindows}).}
\label{tab:ceilings}
\begin{tabular}{lccc}\toprule
Window & $(3-C(f))/2$ & Proved & Ceiling\\\midrule
flat, $f=1$ & $5/6$ & $5/6$ \cite{AF} & $5/6$\\
Montgomery--Taylor & $0.836250$ & $0.836250$ \cite{AF,Lam} & $0.83823$\\
window of \cite{K1} & $0.836228$ & $0.836992$ \cite{K1} & $0.83843$\\
first window & $0.833647$ & $0.839004$ & $0.83998$\\
first window, seven points & & $0.839004$ & $0.83916$\\
any window with $V(f)\le10\,B(f)$ & & & $0.84093$\\
\bottomrule\end{tabular}
\end{table}

\subsection{A ground-state problem}\label{sec:ground}
Consider any labelled configuration $(Y,D)$ on the line in place of the zeros,
with $N=n_1+2n_2$ units of multiplicity in an interval of length $N$, and let
\[
 \mathcal E(Y,D)=\sum_{i,j}d_id_jK(y_i-y_j)^2=n_1+4n_2+\sum_{i\ne j}d_id_jK(y_i-y_j)^2
\]
be its energy. The proof uses three facts about the zeros: $\tr A=N$,
$\tr A^2\le(C(f)+o(1))N$, and that the rescaled zeros lie in an interval of
length $N+o(N)$. An argument that uses nothing else applies to every such
configuration, so it cannot prove more than
\[
 Q^*(f)=\liminf_{N\to\infty}\ \min\Bigl\{\frac{n_1+n_2}{N}:\ \mathcal E(Y,D)\le C(f)\,N\Bigr\}.
\]
The minimum is over configurations on the line with labels in $\{1,2\}$
only, and with the energy constraint without the $o(1)$. Both restrictions
make the minimum larger, so $Q^*(f)$ is an upper bound for what can be proved,
not necessarily the exact limit. A certificate $(x_1,x_2,b)$ in the sense of
Section~\ref{sec:proof} is a linear lower bound for the pair energy: letting
$\varepsilon\to0$ for a fixed configuration and using $\varphi_c(t)\le t^2$,
it gives
$\mathcal E\ge(1+x_1)n_1+(4+x_2)n_2-b\,\spn(Y)-O(1)$, and with
$n_1=2(n_1+n_2)-N$, $n_2=N-(n_1+n_2)$ this is
$\mathcal E\ge(3+x_2-x_1-b)N-(2-2x_1+x_2)(n_1+n_2)-O(1)$. So for $\mu>0$ the
constant of Theorem~\ref{thm:general} satisfies
$(3-C+x_2-x_1-b)/(2-2x_1+x_2)\le Q^*(f)$; in particular $q\le Q^*(f)$ for the
first window. Conversely, since mixtures of configurations make the
set of attainable densities convex, one expects from convex duality that the
best certificate attains $Q^*(f)$; we do not need this and do not prove it.
Either way the problem is that of the least energy, as a function of the
densities of simple and double points, of a one-dimensional system with pair
interaction $d_id_jK(y_i-y_j)^2$.

\subsection{Trial states}
Upper bounds for $Q^*(f)$ come from periodic configurations and their
mixtures, in which long stretches of different periodic states follow each
other. The energy of a periodic state is a finite sum: $K^2$ is the Fourier
transform of $R=f*f$, which is supported in $[-1,1]$, so for period $P$ and
points $y_i$ in a period, by Poisson summation,
\[
 \sum_{i,i'}\sum_{n\in\mathbb Z}d_id_{i'}K(y_i-y_{i'}+nP)^2
 =\frac1P\sum_{|j|\le P}R\Bigl(\frac jP\Bigr)\Bigl|\sum_id_ie^{2\pi ijy_i/P}\Bigr|^2 .
\]

The simplest trial states explain the numbers. Let $e_{\mathrm{lat}}$ be the
pair energy per point of the unit lattice of simple points. For the first
window $K(n)=a_{|n|}/2$ for $1\le|n|\le6$ and $K(n)=0$ for $|n|\ge7$, so
$e_{\mathrm{lat}}=\frac12\sum a_j^2=0.02054$. Now take four consecutive
lattice sites and replace the four simple points by a simple point, a double
point, a simple point and an empty site, with the positions relaxed. This
saves pair energy, an amount $\Delta$ per cell, and costs $2$ in the diagonal
term. A mixture of the lattice and of such cells with a proportion $\rho_2$
of double points has energy $1+e_{\mathrm{lat}}+(2-\Delta)\rho_2$ per unit of
multiplicity, and the constraint $\mathcal E\le CN$ allows
\[
 \rho_2=\frac{C-1-e_{\mathrm{lat}}}{2-\Delta},\qquad\text{so}\qquad
 Q^*(f)\le1-\frac{C-1-e_{\mathrm{lat}}}{2-\Delta}.
\]
For $\Delta=e_{\mathrm{lat}}=0$ this is $(3-C)/2$. A window is good when its
lattice energy is large and merging saves little.

Table~\ref{tab:cell} gives the numbers for three windows.

\begin{table}
\caption{The cell ``simple, double, simple, empty'' as a trial state.}
\label{tab:cell}
\begin{tabular}{lcccc}\toprule
Window & $C-1$ & $e_{\mathrm{lat}}$ & $\Delta$ & Bound for $Q^*$\\\midrule
Montgomery--Taylor & $0.327499$ & $0.006127$ & $0.013471$ & $0.838225$\\
window of \cite{K1} & $0.327543$ & $0.006779$ & $0.014822$ & $0.838421$\\
first window & $0.332705$ & $0.020536$ & $0.044337$ & $0.840378$\\
\bottomrule\end{tabular}
\end{table}

The first window gives up $0.005$ in $C$ and gains $0.014$ in the
lattice energy (Figure~\ref{fig:windows}).

\begin{figure}
\begin{tikzpicture}
\begin{axis}[name=win, scale only axis, width=4.2cm, height=3.6cm, xmin=-0.5, xmax=0.5,
  axis line style={line width=0.5pt}, tick style={line width=0.5pt, color=black},
  xlabel={$u$}, ylabel={$f(u)$}, ymin=0.7, ymax=1.3]
\addplot[line width=1pt] table[x=u, y=fnew] {knausgard839win.dat};
\addplot[line width=1pt, dashed] table[x=u, y=fmt] {knausgard839win.dat};
\addplot[line width=1pt, dotted] table[x=u, y=fline] {knausgard839win.dat};
\end{axis}
\begin{axis}[at={(win.east)}, anchor=west, xshift=1.9cm, scale only axis, width=5.2cm, height=3.6cm,
  axis line style={line width=0.5pt}, tick style={line width=0.5pt, color=black},
  xmin=0, xmax=7.5, ymode=log, ymin=5e-7, ymax=1.5, xlabel={$t$}, ylabel={$K(t)^2$},
  unbounded coords=jump, xtick={0,1,...,7}]
\addplot[line width=0.7pt] table[x=t, y=knew] {knausgard839ker.dat};
\addplot[line width=0.7pt, dashed] table[x=t, y=kmt] {knausgard839ker.dat};
\addplot[line width=0.7pt, dotted] table[x=t, y=kline] {knausgard839ker.dat};
\addplot[only marks, mark=square*, mark size=1.5pt, mark options={fill=white, line width=0.5pt},
  each nth point=80, filter discard warning=false, skip coords between index={0}{1}]
  table[x=t, y=kmt] {knausgard839ker.dat};
\addplot[only marks, mark=*, mark size=1.6pt, mark options={fill=white, line width=0.5pt},
  each nth point=80, filter discard warning=false, skip coords between index={0}{1}]
  table[x=t, y=kline] {knausgard839ker.dat};
\addplot[only marks, mark=*, mark size=1.6pt,
  each nth point=80, filter discard warning=false, skip coords between index={0}{1}]
  table[x=t, y=knew] {knausgard839ker.dat};
\end{axis}
\end{tikzpicture}
\caption{Left: the first window (solid), the second window (dotted), and the Montgomery--Taylor window (dashed),
all normalised to $\int f=1$. Right: the squared kernels on a logarithmic
scale, with their values at the integers marked by filled circles, open
circles and squares, in the same order. At the integers $n=1,\dots,6$ the
kernel of the first window takes the values $a_n/2$, and it vanishes at the
larger integers, so the unit lattice carries the pair energy
$\frac12\sum a_j^2=0.0205$ per point, against $0.0087$ for the second window
and $0.0061$ for the Montgomery--Taylor window.}
\label{fig:windows}
\end{figure}

The entries of the table are enclosed by ball arithmetic, as in
Proposition~\ref{prop:ceiling} below. Better trial states exist for the first
window. A mixture of three states of period seven gives $0.839980$: six
simple points and one empty site; five simple points and one double point; and
the pattern simple, simple, double, empty, double, double, empty. They were
found by local optimisation over arrangements of simple points, double points
and empty sites on a few lattice sites, with the positions relaxed. We do not
claim that they are optimal, and for the two earlier windows we use only the
cell of the table.

\begin{proposition}\label{prop:ceiling}
For the first window, $Q^*(f)<0.83998$. For the window of~\cite{K1},
$Q^*<0.83843$, and for the Montgomery--Taylor window, $Q^*<0.83823$. The last
two are smaller than the constant $q$ of Theorem~\ref{thm:main}.
\end{proposition}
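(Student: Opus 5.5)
The plan is to exhibit explicit trial states, that is, finite mixtures of periodic labelled configurations, whose energy per unit of multiplicity is at most $C(f)$ and whose proportion $(n_1+n_2)/N$ of points is below the stated number. The result then follows from the definition of $Q^*(f)$.

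\emph{Reduction to finitely many periodic states.} Take periodic states $\sigma_1,\dots,\sigma_m$. State $\sigma_k$ has period $P_k$ and $N_k=P_k$ units of multiplicity per period, so its density is one. It has $p_k=n_1+n_2$ points per period and energy $E_k$ per period. For weights $\lambda_k\ge0$ with $\sum\lambda_k=1$, concatenate long blocks of the states, with block lengths proportional to $\lambda_k$. The boundary terms of the energy are $O(1)$ per junction. This holds because $K(t)^2\le C/(1+t^2)$ for the trigonometric-polynomial windows, so the cross interaction between two half-lines is summable. With $o(N)$ junctions we get a configuration of $N$ units in length $N+O(1)$. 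Its energy is $\sum\lambda_kE_k/P_k\cdot N+o(N)$ and its point proportion is $\sum\lambda_kp_k/P_k$. A small rescaling or a slack in the energy absorbs the $o(N)$; if necessary, shrink the mixture slightly toward a state of strictly smaller energy. So it suffices to have $\sum\lambda_kE_k/P_k<C(f)$ with $\sum\lambda_kp_k/P_k<$ the target. This is a two-constraint linear program, whose optimum mixes at most two states.

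\emph{Energies as finite sums.} The energy of a periodic state per period is given exactly by the Poisson formula stated above. It is a finite sum over $|j|\le P$ of $R(j/P)\,|\sum_id_ie^{2\pi ijy_i/P}|^2/P$, where $R=f*f$ is an explicit trigonometric-polynomial expression on $[-1,1]$. Thus each $E_k$ is a finite closed form in $\pi$, cosines of rational multiples of $\pi$ times the relaxed positions, and the $a_j$. I will fix the positions as rationals, taken near the locally optimised values; any positions are admissible, since only an upper bound is needed. I will then enclose each $E_k$ rigorously by ball arithmetic.

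\emph{The three windows.} For the first window I use the three period-seven states listed before the proposition: six simple points and one empty site; five simple points and one double point; and the pattern simple, simple, double, empty, double, double, empty. I solve the LP in exact rationals, using upper bounds for $E_k$ and $C(f)\ge C(f)-10^{-9}$ from \eqref{eq:C} evaluated in intervals, and check that the result is below $0.83998$. For the window of~\cite{K1} and the Montgomery--Taylor window I mix the unit lattice with the cell ``simple, double, simple, empty''. This gives the explicit bound $1-(C-1-e_{\mathrm{lat}})/(2-\Delta)$, and I enclose $C$, $e_{\mathrm{lat}}$ and $\Delta$ as in Table~\ref{tab:cell}. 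The main obstacle is certifying the enclosures with enough precision, since the margin to $0.83998$ is only of order $10^{-6}$. A second, more technical point is verifying that the junction and edge effects are truly $o(N)$. This needs the decay of $K^2$ and the fact that the rescaled interval has length exactly $N$, which is handled by the periodic construction. The final comparison with $q=0.839004\ldots$ is immediate.
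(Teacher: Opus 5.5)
Your route is the paper's: periodic trial states with rational positions, energies from the Poisson formula enclosed by ball arithmetic, rational weights, and concatenation of long stretches. But the reduction step is set up only for states of density one, and it fails for the states you name for the first window.

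\textbf{The density of the states.} You assume $N_k=P_k$ and then normalise both energy and point count per unit length, through $\sum\lambda_kE_k/P_k$ and $\sum\lambda_kp_k/P_k$. The three period-seven states have multiplicity $6$, $7$ and $8$ per period $7$: six simple points and an empty site; five simple points and a double one; and $1+1+2+0+2+2+0=8$. For these states your two sums are energy and points per unit of length, not per unit of multiplicity. Nothing in your programme forces $N$ units of multiplicity to fit into an interval of length $N$.

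Taken literally, your programme drops the dominated density-one state. It mixes the states of density $\frac67$ and $\frac87$ and returns a point share of roughly $0.80$. That is below $q$, and $q\le Q^*(f)$, so it cannot be a valid ceiling. The mixture it describes has fewer than one unit of multiplicity per unit length, hence energy above $C(f)$ per unit of multiplicity, and it is not admissible.

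The correct bookkeeping, as in the paper, normalises by $N_k$. You need rational $w_k\ge0$ with $\sum_kw_kP_k\le\sum_kw_kN_k$ and $\sum_kw_kE_k<C(f)\sum_kw_kN_k$, and the share of points is then $\sum_kw_kl_k/\sum_kw_kN_k$. This is a programme with three constraints: normalisation, length and energy. That is why three states are needed for the first window, where your two-constraint programme would allow only two. The length constraint also makes your rescaling unnecessary: $M_k=Mw_k$ periods of state $k$ placed one after another lie in an interval of length $\sum_kM_kP_k\le N$. For the two earlier windows the lattice and the cell ``simple, double, simple, empty'' both have density one, so your formula $1-(C-1-e_{\mathrm{lat}})/(2-\Delta)$ is correct there.

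\textbf{The junctions.} The cross interaction of two adjacent stretches is not $O(1)$. With $K(t)^2\asymp t^{-2}$ and bounded density, there are about $n$ pairs at distance about $n$ across a junction. The sum therefore behaves like $\sum_{n\le M}1/n$, which is $O(\log M)$ and not summable. This is still $o(N)$ for a bounded number of stretches, which is all you need.

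You also need no error term inside a stretch. Its energy is at most $M_kE_k$, because all terms are nonnegative and $E_k$ already contains the interactions with the whole periodic configuration. So only the cross terms between stretches have to be estimated.
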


\begin{proof}
For each window we give two or three periodic states with rational positions,
all gaps at least $0.99$, and rational weights $w_k\ge0$. Their energies are
evaluated by the formula above in ball arithmetic, and $C(f)$ by rigorous
numerical integration, with the library FLINT/Arb~\cite{Arb}; the data and
the script are in the accompanying files. With $P_k$, $N_k$, $l_k$ and $E_k$ the
period, the multiplicity, the number of points and the energy of a period,
the weights satisfy $\sum_kw_kP_k\le\sum_kw_kN_k$ and
$\sum_kw_kE_k<C(f)\sum_kw_kN_k$. Let $M$ be a multiple of the denominators of
the $w_k$ and place $M_k=Mw_k$ periods of state $k$ one after the other, each
stretch directly after the previous one. The
configuration lies in an interval of length $\sum_kM_kP_k$. The energy of a
stretch is at most $M_kE_k$, because the energy of a period includes its
interactions with the whole periodic configuration and all terms are
nonnegative. Since $K(t)^2=O(t^{-2})$ and the gaps are bounded below, two
stretches interact with energy $O(\log M)$. So for large $M$ the
configuration is admissible in the definition of $Q^*$, and its proportion of
distinct points tends to $\sum_kw_kl_k/\sum_kw_kN_k$.
\end{proof}

For the flat window $f=1$ the kernel $K(t)=\sinc t$ vanishes at the nonzero
integers, so the unit lattice of simple points and the lattice of double
points with spacing two have no pair energy,
and their mixtures give $Q^*=\frac56$: for that window nothing can be added to
the plain bound $(3-C)/2$. For the first window $Q^*(f)$ lies between $q$ and
$0.83998$; with the first window the method cannot give $84\%$.

\subsection{Ceilings for certificates}\label{sec:six}
An inequality for seven consecutive points sees only pairs at index distance
at most six. The next statement bounds
every lower bound for the pair energy that is linear in $n_1$, $n_2$ and the
spread, whatever its proof. Its hypothesis for all pairs holds for every
certificate in the sense of Section~\ref{sec:proof}, as was shown in
Section~\ref{sec:ground}, and its hypothesis with $s=6$ holds for the
certificate of this paper, by Lemma~\ref{lem:transfer} with $\epsilon=0$.

\begin{proposition}[ceilings for certificates]\label{prop:six}
Let $f$ be the first window. Let $x_1,x_2$ and $b\ge0$ be real numbers
with $2-2x_1+x_2>0$ such that, for some constant $C_3$ and all
$\theta$-separated labelled configurations,
\[
 \sum_{0<|i-j|\le s}d_id_jK(y_i-y_j)^2\ \ge\ x_1n_1+x_2n_2-b\,\spn(Y)-C_3 .
\]
Then $(3-C(f)+x_2-x_1-b)/(2-2x_1+x_2)$ is less than $0.83916$ if $s=6$, less
than $0.83943$ if $s=8$, and less than $0.83957$ if $s=10$. If the sum is
taken over all pairs $i\ne j$ instead, it is less than $0.83998$.
\end{proposition}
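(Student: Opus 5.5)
The plan is to turn the hypothesis into an inequality about finitely many explicit periodic trial states, and to reduce the claim to a small linear program with rational data. Fix $s$ and suppose $(x_1,x_2,b)$ satisfies the hypothesis. For each trial state $k$ (period $P_k$, multiplicity $N_k=n_1^{(k)}+2n_2^{(k)}$, counts $n_1^{(k)},n_2^{(k)}$, all gaps at least $0.99\ge\theta$), I place $M$ consecutive periods on the line. This gives a $\theta$-separated labelled configuration, to which the hypothesis applies. Dividing by $M$ and letting $M\to\infty$ gives
\[
 E_k^{(s)}\ \ge\ x_1n_1^{(k)}+x_2n_2^{(k)}-b\,P_k .
\]
Here $E_k^{(s)}$ is the truncated pair energy per period, $\sum_{i\in\text{period}}\sum_{0<|i-j|\le s}d_id_jK(y_i-y_j)^2$. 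For $s$ finite this is a finite sum over index offsets, so it can be computed exactly in ball arithmetic from~\eqref{eq:K}. For the variant with all pairs $i\ne j$, I use instead the Poisson formula of Section~\ref{sec:limits}, minus the diagonal term $n_1+4n_2$.

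Next, write $F(x_1,x_2,b)=(3-C+x_2-x_1-b)/(2-2x_1+x_2)$. The goal is to show that $F<\tau_s$ for every triple that satisfies the state inequalities, with $\tau_6=0.83916$, $\tau_8=0.83943$, $\tau_{10}=0.83957$, and $0.83998$ for all pairs. Since $\mu=2-2x_1+x_2>0$, the claim $F<\tau$ is equivalent to the linear inequality
\[
 3-C+x_2-x_1-b-\tau(2-2x_1+x_2)<0 .
\]
So it suffices to find nonnegative multipliers $w_k$ such that $\sum_kw_k\times(\text{state inequality }k)$ dominates this linear form coefficientwise with strict slack. Concretely, I need
\[
 \sum_kw_kn_1^{(k)}=2\tau-1,\qquad \sum_kw_kn_2^{(k)}=1-\tau,\qquad \sum_kw_kP_k\le1,
\]
together with $\sum_kw_kE_k^{(s)}<C-1-2(2\tau-1)\cdot0-\dots$. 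Written out cleanly, the last condition is $\sum_kw_k(E_k^{(s)}+n_1^{(k)}+4n_2^{(k)})<C\sum_kw_kN_k$, after normalising $\sum_kw_kN_k=1$. This is the same LP-duality/mixture argument as in Proposition~\ref{prop:ceiling}, but with the truncated energy, and its validity for all $b\ge0$ uses $\sum w_kP_k\le\sum w_kN_k$. Equivalently, the mixture of the states is an admissible configuration whose truncated energy fits in the budget. The cleanest write-up is therefore: exhibit rational $w_k$ and rational periodic states, check the three linear conditions and the strict energy inequality, and conclude by contradiction.

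For the finite-$s$ cases the trial states can be taken richer than in Proposition~\ref{prop:ceiling}. Pairs beyond index distance $s$ are free, so the optimiser can exploit clusters that are separated by $s+1$ indices. I would find the states by the local optimisation described before Proposition~\ref{prop:ceiling}, over short periods built from simple, double and empty sites with relaxed positions, using the truncated objective. I would then round to rationals and recompute with FLINT/Arb, with $C(f)$ computed exactly from~\eqref{eq:C}.

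I expect the main obstacle to be the numerical search for good trial states and weights for each $s$, with margins large enough to survive rounding and to land below the stated thresholds. This matters most for $s=6$, where the stated value $0.83916$ is only $1.6\cdot10^{-4}$ above the proved $q$. The rigorous verification itself is routine once the states are fixed.
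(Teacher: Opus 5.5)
Your proposal is correct and follows essentially the paper's proof. You apply the hypothesis to $M$ periods of a few periodic trial states with gaps above $\theta$, pass to the per-period inequalities $x_1n_{1,k}+x_2n_{2,k}-bP_k\le e_k$, and combine them with nonnegative weights satisfying $\sum_kw_kN_k=1$, $\sum_kw_kP_k\le1$ and $\sum_kw_k(n_{1,k}+4n_{2,k}+e_k)\le C(f)$, which gives $3-C(f)+x_2-x_1-b\le\bar q\,(2-2x_1+x_2)$; your variant, which fixes $\bar q=\tau$ and asks for strict slack in the energy, is equivalent.

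The remaining differences are implementation details. The paper names its states: for $s=6$ these are the lattice of spacing $1.02532$, six simple points with one empty site, and the pattern simple, simple, double, empty, double, double, empty. For all pairs it does not use the Poisson formula but sums $K^2$ directly over the integer period $7$. It adds the first $30$ periods exactly and bounds the rest by a telescoping tail estimate, which suits the rational-arithmetic formal proof.
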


\begin{proof}
Let $s=6$. We use three periodic states: the lattice of simple points with spacing
$1.02532$; six simple points and one empty site in a period of length
$7.07365$; and the pattern simple, simple, double, empty, double, double,
empty in a period of length $7.03513$. These lengths are rounded; the
rational positions are in the accompanying files. All
gaps exceed $\theta$. Applying the hypothesis to $M$ periods of state $k$,
dividing by $M$ and letting $M\to\infty$ gives $x_1n_{1,k}+x_2n_{2,k}-bP_k\le e_k$, where $e_k$
is the pair energy per period restricted to index distance at most six, a
finite sum of values of $K^2$. Take weights $w_k\ge0$, normalised so that
$\sum_kw_kN_k=1$, with $\sum_kw_kP_k\le1$ and
$\sum_kw_k(n_{1,k}+4n_{2,k}+e_k)\le C(f)$, and put $\bar q=\sum_kw_kl_k$. Then
$\sum_kw_kn_{1,k}=2\bar q-1$ and $\sum_kw_kn_{2,k}=1-\bar q$, so the weighted sum of
the three inequalities is
\[
 (2\bar q-1)x_1+(1-\bar q)x_2-b\ \le\ C(f)-(2\bar q-1)-4(1-\bar q),
\]
that is, $3-C(f)+x_2-x_1-b\le\bar q\,(2-2x_1+x_2)$. The weights in the accompanying
files give $\bar q=0.8391518\ldots$. For $s=8$ and $s=10$ the argument is the same,
each with three states, and gives $\bar q=0.8394222\ldots$ and
$\bar q=0.8395628\ldots$.

For all pairs we use the three states of period seven described before
Proposition~\ref{prop:ceiling}. The pair energy of a period is now a sum over
all periods. Since the period is an integer, $\sin^2(\pi t)$ takes the same
value at $t$ and at $t+7n$, and by~\eqref{eq:K} the sum over $n$ is a sum of
values of a rational function. Its terms with $n\le30$ are added exactly. For
$|t|\ge210$ one has $K(t)^2\le\sin^2(\pi t)\,E^2/(\pi^2t^2)$ with an explicit
$E$, and $1/t^2\le1/((t-\frac72)(t+\frac72))$ telescopes along $t+7n$, so the
rest is at most an explicit constant. This gives $\bar q=0.8399799\ldots$.

The four statements are formalised (Section~\ref{sec:lean}). The formal proof
uses the same states, with rational bounds for $\sin^2$ and for $\pi$ in
place of ball arithmetic.
\end{proof}

The certificate of this paper therefore realises at least $97\%$ of what
frames of seven points can add to $(3-C)/2$ for the first window, within the
scheme of Section~\ref{sec:matrix}. Going further needs longer frames: frames
of nine and of eleven points can give at most $0.83943$ and $0.83957$, and no
frame length can give $0.83998$.

\subsection{The critical line}
For Theorem~\ref{thm:line} the same two questions have the same kind of
answer. Only simple points interact, and a pair $(x,b)$ enters the bound
$(2-C(f)-b)/(1-x)$ through a lower bound for their pair energy. By
$\varphi_2(t)\le t^2$, a certificate $(x,b)$ for the critical line gives, for
a fixed set and $\varepsilon\to0$,
$\sum_{i\ne j}K(y_i-y_j)^2\ge x\,n-b\,\spn(Y)-C_2$, which is the hypothesis
of the next proposition for all pairs. A pair obtained from local data through
Proposition~\ref{prop:gram2} satisfies the hypothesis with $s=7$,
by~\eqref{eq:transfer8} with $\epsilon=0$. Unlike $Q^*(f)$, which bounds
every argument that uses only the trace, the energy and the length, the
proposition is a statement about the scheme of Section~\ref{sec:line} only:
its trial states place simple points and give the rest of the multiplicity the
energy $2$ per unit, with no interaction.

\begin{proposition}\label{prop:line-ceiling}
Let $f$ be the second window, of Section~\ref{sec:line}, and let $x<1$ and
$b\ge0$ be real numbers such that, for some constant $C_3$ and all
$\theta$-separated sets $Y$ of $n$ points,
\[
 \sum_{0<|i-j|\le s}K(y_i-y_j)^2\ \ge\ x\,n-b\,\spn(Y)-C_3 .
\]
If $s=7$, then $(2-C(f)-b)/(1-x)<0.67357$. If the sum is taken over all pairs
$i\ne j$ instead, then $(2-C(f)-b)/(1-x)<0.67383$.
\end{proposition}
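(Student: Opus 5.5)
The plan is to copy the proof of Proposition~\ref{prop:six}, now for unlabelled points and the count of Theorem~\ref{thm:line-general}. I will use a finite family of periodic trial states of simple points. Each state $k$ has rational positions, period $P_k$, $l_k$ points per period, and all gaps at least $\theta$. Applying the hypothesis to $M$ consecutive periods of state $k$, dividing by $M$ and letting $M\to\infty$ gives
\[
 x\,l_k-b\,P_k\ \le\ e_k ,
\]
where $e_k$ is the pair energy per period, restricted to index distance at most $s$. For $s=7$ it is a finite sum of values of $K^2$. For all pairs it is the full periodic sum. The boundary effects are $O(1)$, since the restricted sum is local; for all pairs they are $O(\log M)$, since $K^2=O(t^{-2})$ and the gaps are bounded below.

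Next I mix the states. The trial configuration is meant to model the scheme of Section~\ref{sec:line}. The simple points of state $k$ carry energy $l_k+e_k$ per period. The remaining multiplicity $P_k-l_k$ per period is given the cost $2$ per unit with no interaction, as in~\eqref{eq:key2}. Take weights $w_k\ge0$ with $\sum_kw_kP_k=1$, so that the length per unit of multiplicity is one. Require also $\sum_kw_k\bigl(l_k+e_k+2(P_k-l_k)\bigr)\le C(f)$, and put $\bar s=\sum_kw_kl_k$. The weighted sum of the inequalities above gives $x\bar s-b\le\sum_kw_ke_k$. The energy constraint gives $\sum_kw_ke_k\le C(f)-2+\bar s$. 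Together,
\[
 x\bar s-b\ \le\ C(f)-2+\bar s ,
\qquad\text{that is,}\qquad
 \frac{2-C(f)-b}{1-x}\ \le\ \bar s ,
\]
using $x<1$. So it suffices to exhibit states and weights with $\bar s<0.67357$ for $s=7$ and $\bar s<0.67383$ for all pairs.

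For the states I will start from the heuristic of the remark on the critical line: simple points at density about $\frac23$, with gaps near $1$ and $2$, so that distances are near integers where $K$ is small. Concretely I would take the lattice with a vacancy every third site, relaxed. Another state is a denser lattice of period near $1$. A third is a pattern mixing single vacancies with runs of simple points. I will optimise the positions numerically and then freeze them as rationals. Two or three states should suffice, with weights found by a small linear program.

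For $s=7$ the values $e_k$ are finite sums of $K(t)^2$. I will evaluate these by~\eqref{eq:K} in ball arithmetic, with $C(f)$ from~\eqref{eq:C} and the rational bound $C'_0$. For all pairs I will take integer periods. Then $\sin^2(\pi t)$ is periodic along $t+P n$, the sum over $n$ of~\eqref{eq:K} becomes a sum of values of a rational function, and the tail is bounded by the telescoping estimate $1/t^2\le1/((t-\frac P2)(t+\frac P2))$, exactly as in Proposition~\ref{prop:six}.

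The main obstacle is finding trial states good enough. The stated ceilings exceed the proved value $0.673538$ by only about $3\cdot10^{-5}$ and $3\cdot10^{-4}$. The states must therefore nearly saturate the dual of the certificate search. A crude vacancy lattice will probably not suffice, and the positions must be relaxed carefully, essentially by solving the average-cost control problem of Remark~\ref{rem:bellman} approximately. The rigorous enclosure is routine once the states are fixed.
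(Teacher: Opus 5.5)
Your argument is the paper's. With $\sum_kw_kP_k=1$, your energy constraint $\sum_kw_k\bigl(l_k+e_k+2(P_k-l_k)\bigr)\le C(f)$ is exactly the paper's $\sum_kw_k(e_k-n_k)+2-C(f)\le0$. The weighted sum of $x\,l_k-b\,P_k\le e_k$ then gives $2-C(f)-b\le\bar s\,(1-x)$ in the same way. For all pairs the paper also takes integral periods, adds the first periods exactly and bounds the tail by the telescoping estimate.

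Two small points. First, you do not need the boundary estimates. All terms are nonnegative and $e_k$ already counts each point's interactions with the whole periodic configuration, so the pair sum of $M$ periods is at most $Me_k$ outright. Second, checking the constraint needs upper bounds for the $e_k$ and a \emph{lower} bound for $C(f)$. The rational bound $C'_0$ of Section~\ref{sec:line} is an upper bound, so if you meant to use it in the constraint it goes the wrong way. Enclose $C(f)$ from~\eqref{eq:C} from below instead.

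What your proposal does not yet contain is the witness, and for this statement the witness is the whole content. The two constants are the values of particular states, rounded up. The paper's states give $\bar s=0.6735660\ldots$ and $\bar s=0.6738216\ldots$, each less than $10^{-5}$ below $0.67357$ and $0.67383$. A choice that is not optimised as carefully need not clear the thresholds.

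For $s=7$ the paper uses two states with five points per period and non-integral periods; a finite sum does not need an integral period.
\begin{itemize}
\item Period $7.04735$, gaps $1.03627$, $1.03627$, $1.96582$, $1.04317$, $1.96582$: three simple points, a vacancy, two simple points, a vacancy.
\item Period $8.04125$, gaps $1.04588$, $1.97681$, $1.04588$, $1.98634$, $1.98634$.
\end{itemize}
Their densities, about $\frac57$ and $\frac58$, bracket the target $\approx0.6736$; note that one partner lies below $\frac23$. No dense lattice is used, so your lattice of period near $1$ plays no role.

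For all pairs your guesses are the right ones. The paper uses the relaxed vacancy lattice of period $3$, with gaps $1.03846$, $1.96154$. It also uses a period-$7$ state with the pattern above and gaps $1.03009$, $1.03009$, $1.95116$, $1.03749$, $1.95117$. The first $60$ periods are added exactly for the period-$3$ state.
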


\begin{proof}
Apply the hypothesis to $M$ periods of a periodic state with $n_k$ points and
period $P_k$. Let $e_k$ be its pair energy per period: the sum of
$K(y_i-y_j)^2$ over the ordered pairs $i\ne j$ with $y_i$ in one period,
restricted to $|i-j|\le7$ in the first case. Divide by $M$ and let
$M\to\infty$: $x\,n_k-b\,P_k\le e_k$. Take weights $w_k\ge0$ with
$\sum_kw_kP_k=1$ and $\sum_kw_k(e_k-n_k)+2-C(f)\le0$, and put
$\bar s=\sum_kw_kn_k$. The weighted sum is
$\bar s\,x-b\le\sum_kw_ke_k\le\bar s-(2-C(f))$, that is,
$2-C(f)-b\le\bar s\,(1-x)$. For $s=7$ we use two states with five points in
a period: one of length $7.04735$ with gaps $1.03627$, $1.03627$, $1.96582$,
$1.04317$, $1.96582$, and one of length $8.04125$ with gaps $1.04588$,
$1.97681$, $1.04588$, $1.98634$, $1.98634$. Ball arithmetic gives
$\bar s=0.6735660\ldots$; the
script and the data are in the accompanying files. For all
pairs the periods are $3$ and $7$, with gaps $1.03846$, $1.96154$ and
$1.03009$, $1.03009$, $1.95116$, $1.03749$, $1.95117$, and
$\bar s=0.6738216\ldots$; here the pair energy of a period is bounded as in
the proof of Proposition~\ref{prop:six}, with the first $60$ periods for the
state of period $3$. Both statements are formalised
(Section~\ref{sec:lean}).
\end{proof}

In these states gaps near $1$ and near $2$ alternate, so that all distances are
close to integers, where $K$ is small. The certificate of
Theorem~\ref{thm:line} realises at least $98\%$ of what frames of eight
points can add to $2-C(f)$ for the second window, within the scheme of
Section~\ref{sec:line}. Upper bounds of this kind were stated before. According
to~\cite{Trmdy}, a periodic configuration of $673$ points in a period of length
$1000$ with integer distances limits to $0.674826$ what a certificate can give
that uses only the pair energy of simple zeros, in the setting of that work.
Gebendorfer~\cite{GebCap} proved with a mixture of two periodic configurations
that for the windows $\cos(\sqrt2\,u)+\sum_{j=1}^{24}c_j\cos(2\pi ju)$,
normalised to integral one, and for certificates made from an inequality for
blocks of any length with a charge for each gap, the constant is below
$0.674034$. Proposition~\ref{prop:line-ceiling} is for one window of another
form, and its bounds are smaller.

\begin{remark}[the best certificate as an optimal-control problem]\label{rem:bellman}
In the terms of Remark~\ref{rem:control}, finding the best triple for frames
of seven points is dual to an average-cost problem for a chain with
interactions of range six: a triple can be admissible only if the least mean
value of $\text{energy}-x_1n_1-x_2n_2+b\cdot\text{length}$ per point is
nonnegative, and in the corresponding Bellman equation the storage function
$\Phi$ plays the role of the relative value function. Periodic trial states
are admissible trajectories and bound the value from the other side. We have
not proved that certificates of the form~\eqref{eq:local} attain the value of
this problem.
\end{remark}

\subsection{What limits the window}
Three requirements compete. The energy constant $C(f)$ should be small; its
minimum $1.3275$ is attained by the Montgomery--Taylor window~\cite{Mon75}. The lattice
energy should be large, which needs Fourier coefficients $a_j$ of some size,
and these increase $C$ through the term $\frac12\sum a_j^2$ in~\eqref{eq:C}.
Finally the edge value $f(\frac12)$ must be small enough for the majorant.
By Remark~\ref{rem:c} one needs $\Lambda<\frac c2\le1+\frac12\sqrt{2-2x_1}=1.6876$.
For a majorant of the form used in Section~\ref{sec:proof},
$f(\frac12)\le g(\frac12)=\gamma_0(1+\sinc\theta)^2$ forces
$\Lambda\ge2f(\frac12)/(\theta(1+\sinc\theta))=1.6700$ for the first window. So
here it is the edge value of $f$ that decides whether the separated blocks
escape the clipping.

\subsection{All windows}\label{sec:allwindows}
The bounds above are for one window at a time. A bound for all windows
follows from a construction of Alp\"oge and Furman~\cite[Section~7.2]{AF}. A
window enters the method only through the pair correlation of the zeros tested
against $R=f*f$, which is supported in $[-1,1]$. In~\cite{AF} a probability
law on periodic configurations is used to show that pair correlation in this
band cannot give more than about $0.682$ for simple zeros. The same
construction bounds the quantities of this paper. It has two parts, an
inequality and a law. The law of~\cite{AF} is not public, so we computed a law
of the same kind ourselves; it is given with the accompanying files, its
properties are certified, and its share of simple points is slightly smaller
than the one reported in~\cite{AF}.

\begin{definition}\label{def:law}
Let $N\ge2$ be an integer and $\tau\ge0$. A \emph{law of period $N$ with
error $\tau$} is a finitely supported probability law, with rational weights
$w_{\mathfrak c}$, on $N$-periodic configurations $\mathfrak c$ of distinct
points $\xi_i$ with labels $d_i\in\{1,2\}$ and total multiplicity $N$ per
period, such that the form factor
\[
 S(j)=\frac1N\sum_{\mathfrak c}w_{\mathfrak c}\Bigl|\sum_id_ie^{2\pi ij\xi_i/N}\Bigr|^2 ,
\]
in which the inner sum is over the points of one period, satisfies
$|N\,S(j)-j|\le\tau$ for $0<j<N$. Its \emph{share of simple points} is
$p=\frac1N\sum_{\mathfrak c}w_{\mathfrak c}\,n_1(\mathfrak c)$, where
$n_1(\mathfrak c)$ is the number of simple points of $\mathfrak c$ in a
period.
\end{definition}

The condition on $S(j)$ says that the law has the pair correlation of the
zeros in the band~\cite{Mon73}.

\begin{lemma}[{\cite[Section~7.2]{AF}, \cite{Claude26}}]\label{lem:stability}
Let $N\ge1$, $\tau\ge0$ and let $s_1,\dots,s_N$ be real numbers with
$|N^2s_j-j|\le\tau$ for $0<j<N$. Let $r$ be twice continuously differentiable
on $[0,1]$ with $r(1)=0$. Then
\[
 \Bigl|\sum_{j=1}^Ns_j\,r\Bigl(\frac jN\Bigr)-\int_0^1r(u)\,u\,du\Bigr|
 \ \le\ \Bigl(\frac1{6N^2}+\frac\tau{2N}\Bigr)\Bigl(|r'(1)|+\int_0^1|r''|\Bigr).
\]
\end{lemma}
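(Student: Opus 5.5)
The plan is to split each coefficient $s_j$ into its main value and a small error, and to estimate the two resulting sums separately. Put $M=|r'(1)|+\int_0^1|r''|$. Two elementary bounds carry the whole proof, and both come from $r(1)=0$ and the fundamental theorem of calculus. First, $r'(u)=r'(1)-\int_u^1r''$, so $|r'(u)|\le M$ for all $u\in[0,1]$. Second, integrating this from $u$ to $1$ gives $|r(u)|\le(1-u)M$.

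Write $s_j=j/N^2+\delta_j$. The hypothesis gives $|\delta_j|\le\tau/N^2$ for $0<j<N$. The term $j=N$ contributes nothing whatever $s_N$ is, because $r(1)=0$. This leaves
\[
 \sum_{j=1}^Ns_j\,r\Bigl(\frac jN\Bigr)=\frac1N\sum_{j=1}^{N}h\Bigl(\frac jN\Bigr)+\sum_{j=1}^{N-1}\delta_j\,r\Bigl(\frac jN\Bigr),
 \qquad h(u)=u\,r(u).
\]

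\emph{The error sum.} By the second bound,
\[
 \Bigl|\sum_{j<N}\delta_jr(j/N)\Bigr|\le\frac{\tau M}{N^2}\sum_{j<N}\Bigl(1-\frac jN\Bigr)=\frac{\tau M(N-1)}{2N^2}\le\frac{\tau}{2N}M .
\]
This is the second part of the claimed constant. The weight $(1-u)$ is exactly what produces the factor $\frac12$; the cruder bound $\sup|r|\le M$ would only give $\tau/N$.

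\emph{The main sum.} Since $h(0)=h(1)=0$, the right-endpoint sum $\frac1N\sum_jh(j/N)$ equals the trapezoid sum for $h$ on the grid $\frac1N\mathbb Z\cap[0,1]$, and $\int_0^1h=\int_0^1r(u)\,u\,du$. So I need
\[
 \Bigl|T_N(h)-\int_0^1h\Bigr|\le\frac{M}{6N^2}.
\]
I will use the Peano form of the trapezoid error. With $x=\{Nu\}$,
\[
 T_N(h)-\int_0^1h=\frac1{2N^2}\int_0^1x(1-x)\,h''(u)\,du .
\]
The kernel is nonnegative with mean $\frac16$. I split it into its mean and a periodic part of mean zero.

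The mean part gives $\frac1{12N^2}\bigl(h'(1)-h'(0)\bigr)=\frac1{12N^2}\bigl(r'(1)-r(0)\bigr)$. Since $|r(0)|\le M$ by the second bound, this is at most $\frac{2M}{12N^2}=\frac M{6N^2}$.

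The oscillating part is $-\frac1{2N^2}\int B_2(\{Nu\})h''$, where $B_2$ is the Bernoulli polynomial, which has mean zero on each cell. I would integrate it by parts against the periodic antiderivative $\frac1{3N}B_3(\{Nu\})$. Its boundary terms vanish because $B_3(0)=B_3(1)=0$. This moves one derivative onto $h''=2r'+ur''$.

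\emph{The obstacle.} This last step is the delicate one. Integrating by parts costs one more derivative of $r$, which the hypotheses do not give, since only $r\in C^2$ and $\int|r''|$ are available. So I would first try to avoid it: write $h''=2r'+ur''$, integrate only the $2r'$ term by parts against the kernel's antiderivative, and leave the $ur''$ term in Peano form. The aim is to show that the non-mean contributions are absorbed into the single constant $\frac16$, after combining with the mean part and using $|r'|\le M$.

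If a loss remains, I would check the constant directly. Both sides are linear in $r$, and the extremal cases are $r(u)=1-u$ and, for $r''$ concentrated at one point, the corresponding piecewise linear functions; the Peano kernels can be computed exactly in these cases to confirm the constant $\frac16$.
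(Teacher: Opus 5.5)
Everything up to the main estimate is correct: $|r'|\le M$, $|r(u)|\le(1-u)M$, the vanishing of the term $j=N$, the bound $\frac{\tau}{2N}M$ for the error sum, and the Peano formula for the trapezoid error.

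The gap is the inequality $|T_N(h)-\int_0^1h|\le M/(6N^2)$ itself, which the proposal leaves open. Your bound $\frac1{12N^2}|r'(1)-r(0)|\le\frac{M}{6N^2}$ for the mean part already spends the whole budget. The oscillating part is not zero in general. Integrating all of $h''$ by parts needs $r'''$, as you say. Integrating only the $2r'$ piece against $\frac1{3N}B_3(\{Nu\})$ is legitimate. But then the three contributions must be estimated together, pointwise in $u$. Suppose you bound $|B_2|\le\frac16$ and $|B_3|\le\frac{\sqrt3}{36}$ separately. Even with the sharper identity $r(0)=-r'(1)+\int_0^1v\,r''(v)\,dv$ in the mean part, you get $\frac1{6N^2}\bigl(|r'(1)|+\int_0^1u\,|r''|\bigr)+\frac{\sqrt3}{108N^3}\int_0^1|r''|$. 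This exceeds $M/(6N^2)$ when $r''$ is concentrated within about $0.1/N$ of $u=1$. Bounding the $2r'$ piece by $|r'|\le M$ instead adds a term of order $M/N^2$.

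The missing step is a pointwise bound for one kernel. Your remark about extremal cases points to it but does not carry it out. Taylor's formula at $1$ with $r(1)=0$ gives $r(x)=-(1-x)\,r'(1)+\int_0^1(v-x)_+\,r''(v)\,dv$. Put $\phi_v(x)=(v-x)_+$, so that $1-x=\phi_1$, and $E(r)=\sum_js_jr(j/N)-\int_0^1r(u)\,u\,du$. Linearity gives $|E(r)|\le\sup_{0\le v\le1}|E(\phi_v)|\,\bigl(|r'(1)|+\int_0^1|r''|\bigr)$. For $s_j=j/N^2$ and $Nv=k+x$ with $k$ an integer and $0\le x<1$,
\[
 E(\phi_v)=\sum_{j\le k}\frac{j\,(k+x-j)}{N^3}-\frac{(k+x)^3}{6N^3}
 =-\frac1{2N^3}\Bigl(k\bigl(\tfrac13-x(1-x)\bigr)+\frac{x^3}3\Bigr).
\]
This is exactly the kernel against $r''$ that your three pieces add up to once $r(0)$ is rewritten as above. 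The bracket is nonnegative and convex in $x$, hence at most $\frac{k+1}3$. Moreover $k\le N-1$ when $x>0$, while for $x=0$ the bracket is $\frac k3\le\frac N3$. So $|E(\phi_v)|\le\frac1{6N^2}$, with equality at $v=1$, which is your case $r=1-u$. The deviations $\delta_j$ change $E(\phi_v)$ by at most $\frac\tau{N^2}\sum_{j<N}(1-\frac jN)\le\frac\tau{2N}$, which is your error-sum computation. This representation is the integral form of the two summations by parts by which the paper obtains the lemma.
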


It follows from two summations by parts; in the Lean library~\cite{Claude26}
it is the combination of \texttt{ceiling\_stability} and
\texttt{abs\_Efun\_le\_of\_nearCUE} (directory \texttt{PairCeiling}), applied
to a continuously differentiable extension of $r$ from $[0,1]$: hypotheses and
conclusion involve $r$ on $[0,1]$ only.

\begin{theorem}[a ceiling for all windows]\label{thm:allwindows}
Suppose that there is a law of period $N$ with error $\tau$ and share of
simple points $p$, and put $\tau_0=\frac1{6N^2}+\frac\tau{2N}$. Let $f$ be an
even probability density on $I$, twice continuously differentiable on $I$,
let $R=f*f$,
\begin{align*}
 B(f)&=\iint|s-t|\,f(s)f(t)\,ds\,dt=2\int_0^1uR(u)\,du,\\
 V(f)&=f(\tfrac12)^2+\int_0^1|R''(u)|\,du ,
\end{align*}
and $\delta_0=2\tau_0V(f)/(B(f)+2\tau_0V(f))$.
\begin{enumerate}
\item[(a)] $Q^*(f)\le\frac{1+p}2+\frac{1-p}2\,\delta_0$. In particular no
certificate for $f$ with $2-2x_1+x_2>0$ gives more.
\item[(b)] Suppose that all gaps in the configurations of the law are at least
$g$, and let $0<\theta\le\min(g,1)$. Let $K$ be the kernel of $f$, and let
$x<1$ and $b\ge0$ be such that, for some constant $C_3$ and all
$\theta$-separated sets $Y$ of $n$ points,
$\sum_{i\ne j}K(y_i-y_j)^2\ge x\,n-b\,\spn(Y)-C_3$. Then
$(2-C(f)-b)/(1-x)\le p+(1-p)\,\delta_0$.
\end{enumerate}
\end{theorem}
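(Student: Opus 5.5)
The plan is to exhibit, for each window $f$, an admissible configuration for $Q^*(f)$. It will be a mixture, in the sense of the proof of Proposition~\ref{prop:ceiling}, of the periodic states of the law and of the unit lattice of simple points. The mixture weights are chosen so that the energy constraint $\mathcal E\le C(f)N$ holds exactly. The bound then follows from the share of distinct points of the mixture.

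\emph{Step 1: energy of the law.} For an $N$-periodic configuration $\mathfrak c$ with multiplicity $N$ per period, the Poisson formula of the subsection on trial states, with $P=N$, gives the energy per unit of multiplicity as
\[
 \frac1{N^2}\sum_{|j|\le N}R\Bigl(\frac jN\Bigr)\Bigl|\sum_id_ie^{2\pi ij\xi_i/N}\Bigr|^2 .
\]
Averaging over the law and using that $R$ is even with $R(\pm1)=0$, this becomes
\[
 R(0)+\frac2N\sum_{j=1}^{N-1}R\Bigl(\frac jN\Bigr)S(j).
\]
Here $R(0)=\int f^2$ comes from $j=0$, since the inner sum is then $N$ for every $\mathfrak c$. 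Put $s_j=S(j)/N$, so that $|N^2s_j-j|\le\tau$, and let $r=2R$ on $[0,1]$, which is $C^2$ with $r(1)=0$. Lemma~\ref{lem:stability} then gives
\[
 \sum_j s_jr(j/N)=\int_0^1 2uR(u)\,du+O\bigl(\tau_0(|r'(1)|+\textstyle\int|r''|)\bigr)=B(f)+O(\cdot).
\]
One has $R'(1)=-f(\tfrac12)^2$ up to sign, since $R(u)=\int f(s)f(u-s)\,ds$ near $u=1$ only sees the edge values. Hence the mean energy per unit of multiplicity of the law is at most $C(f)+2\tau_0V(f)$. This matches the definition of $V$, and keeping the constants straight here is the one delicate computation.

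\emph{Step 2: the compensating state.} The unit lattice of simple points has energy per point $\sum_nK(n)^2=\sum_jR(j)=R(0)=\int f^2=C(f)-B(f)$ by Poisson summation. Its density is $1$ and all of its points are distinct. Give the law weight $1-\delta_0$ and the lattice weight $\delta_0$ per unit of multiplicity. The energy is then at most
\[
 (1-\delta_0)(C+2\tau_0V)+\delta_0(C-B)=C,
\]
precisely by the choice of $\delta_0$, and the total length is at most the total multiplicity. The share of distinct points of the law is $p+\frac{1-p}2=\frac{1+p}2$, so the mixture has the share $(1-\delta_0)\frac{1+p}2+\delta_0=\frac{1+p}2+\frac{1-p}2\delta_0$. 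The concatenation argument of Proposition~\ref{prop:ceiling} then makes this admissible for $Q^*$. The interaction between stretches is $O(\log M)$. The rational weights may need a tiny perturbation, which is absorbed in the $\liminf$ or handled by strict slack; I expect this to be routine. The claim about certificates follows from $q\le Q^*(f)$ as shown in Section~\ref{sec:ground}.

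\emph{Step 3: part (b).} The plan is the argument of Proposition~\ref{prop:line-ceiling}. Apply the hypothesis to $M$ periods of each simple sub-configuration: take the simple points of $\mathfrak c$, and separately the lattice. All gaps are at least $\min(g,1)\ge\theta$, and deleting the double points only keeps gaps large. Weight these as in Step~2 and divide by $M$.

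The energy balance is where part (b) differs. The double points get energy at least $2$ per unit in the scheme, while the full pair energy includes cross terms. So I would use $\sum_{i\ne j}$ over simple points of $\mathfrak c$, which is at most the full energy minus $4n_2$ minus $n_1$. The averaged inequality then reads $\bar s x-b\le \text{(simple pair energy)}\le C-2(1-\bar s)-\bar s$, which gives $2-C-b\le\bar s(1-x)$ with $\bar s=(1-\delta_0)p+\delta_0$.

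The main obstacle I expect is making sure that the cross terms between simple and double points are nonnegative, and can therefore be dropped. They are nonnegative, since all terms $d_id_jK^2\ge0$, and this is what allows the double points to be charged only their diagonal energy $4$ per point, that is, $2$ per unit.
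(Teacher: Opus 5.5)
Your proposal is correct and follows the paper's proof essentially step for step. The Poisson formula and Lemma~\ref{lem:stability} bound the mean energy of the law by $C(f)+2\tau_0V(f)$, the unit lattice compensates with weight $\delta_0$, and part (b) is the same convex combination of the per-state inequalities after the diagonal $n_1+4n_2$ is removed. The perturbation you leave as routine is done in the paper by taking rational $\delta>\delta_0$, whose strict slack absorbs the $O(\log M)$ interaction between stretches, and then letting $\delta\to\delta_0$.
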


Part~(b) is the statement for the scheme of Section~\ref{sec:line}: its
hypothesis is that of Proposition~\ref{prop:line-ceiling} for all pairs, now
for any window and any separation up to $\min(g,1)$.

\begin{proof}
(a) A configuration with labels in $\{1,2\}$ and $N$ units of multiplicity per
period, of which $n_1$ are simple points, has $(N+n_1)/2$ points per period.
So the law has on average $\frac{1+p}2$ points per unit of multiplicity. By
the Poisson formula, the energy per period of a configuration $\mathfrak c$ of
the law is $E_{\mathfrak c}=N\,R(0)+2\sum_{j=1}^{N-1}R(j/N)\,S_{\mathfrak c}(j)$
with $S_{\mathfrak c}(j)=|\sum_id_ie^{2\pi ij\xi_i/N}|^2/N$. Hence the mean
energy of the law per unit of multiplicity is
\[
 R(0)+2\sum_{j=1}^{N}\frac{S(j)}{N}\,R\Bigl(\frac jN\Bigr)
 \ \le\ R(0)+2\int_0^1uR(u)\,du+2\tau_0V(f)=C(f)+2\tau_0V(f),
\]
by Lemma~\ref{lem:stability} with $s_j=S(j)/N$ and $r=R$: on $[0,1]$ the
function $R$ is twice continuously differentiable up to the endpoints,
$R(1)=0$ and $R'(1)=-f(\frac12)^2$. The unit lattice of simple points has
energy $R(0)=C(f)-B(f)$ per point. A mixture, in the sense of
Proposition~\ref{prop:ceiling}, of the lattice with a share $\delta$ of the
multiplicity and of the configurations of the law with the share $1-\delta$
has energy at most $C(f)-\delta B(f)+(1-\delta)2\tau_0V(f)$ per unit of
multiplicity, and $\delta+(1-\delta)\frac{1+p}2$ points. All periods equal the
multiplicities, so the mixture is admissible in the definition of $Q^*$ for
every rational $\delta>\delta_0$. The argument of
Proposition~\ref{prop:ceiling} uses only that the weights are rational, that
there are finitely many states and that their points are distinct. Let
$\delta$ tend to $\delta_0$. The last sentence follows from
Section~\ref{sec:ground}.

(b) Let $Y_{\mathfrak c}$ be the set of simple points of a configuration
$\mathfrak c$ of the law. It has period $N$ and $n_1(\mathfrak c)$ points per
period, and it is $\theta$-separated. Let $e_{\mathfrak c}$ be its pair energy
per period, the sum of $K(y-y')^2$ over the ordered pairs of distinct points
with $y$ in one period. All terms of the energy of $\mathfrak c$ are
nonnegative and its diagonal terms add up to $n_1+4n_2$ per period, so
$e_{\mathfrak c}\le E_{\mathfrak c}-n_1(\mathfrak c)-4n_2(\mathfrak c)$. As in
the proof of Proposition~\ref{prop:line-ceiling}, the hypothesis applied to
$M$ periods, divided by $M$, gives for $M\to\infty$
\[
 x\,n_1(\mathfrak c)-b\,N\le e_{\mathfrak c},
 \qquad\text{and}\qquad x-b\le R(0)-1=C(f)-B(f)-1
\]
for the unit lattice, which is $\theta$-separated as well. Multiply the first
inequality by $(1-\delta_0)w_{\mathfrak c}/N$, sum over $\mathfrak c$, and add
$\delta_0$ times the second. With $\bar s=\delta_0+(1-\delta_0)p$ the left side
is $\bar s\,x-b$. The mean of $n_1+4n_2$ per unit of multiplicity is
$p+2(1-p)$, so by the bound for the mean energy in~(a) the right side is at
most
\[
 (1-\delta_0)\bigl(C(f)+2\tau_0V(f)-2+p\bigr)+\delta_0\bigl(C(f)-B(f)-1\bigr)
 =\bar s-(2-C(f)),
\]
where the terms $(1-\delta_0)2\tau_0V(f)$ and $\delta_0B(f)$ cancel by the
choice of $\delta_0$. Hence $2-C(f)-b\le\bar s\,(1-x)$, and
$\bar s=p+(1-p)\,\delta_0$.
\end{proof}

\begin{proposition}[a law of period $256$]\label{prop:law}
There is a law of period $256$ with error $10^{-30}$, supported on $256$
configurations with rational points, in which all gaps are at least $0.998$,
and with share of simple points $p=0.6818276195\ldots$
\end{proposition}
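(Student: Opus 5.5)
The proof is a certification of explicit data, so the plan is to construct the law numerically and then check Definition~\ref{def:law} exactly. The law is a rational probability vector $(w_{\mathfrak c})$ on $256$ periodic configurations of period $N=256$. Each configuration has total multiplicity $256$ per period, rational positions $\xi_i$, labels $d_i\in\{1,2\}$, and all gaps at least $0.998$. The conditions are $|256\,S(j)-j|\le10^{-30}$ for $0<j<256$, and the value of $p$.

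To find the law, I would first generate a pool of candidate configurations. They would come from perturbed lattices with some pairs of adjacent simple points merged into double points, and from samples whose form factor resembles that of the CUE, as in the construction of~\cite{AF}. This step is heuristic.

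With the candidates fixed, the form factors $S_{\mathfrak c}(j)$ are fixed numbers. The conditions $NS(j)=j$ are then $255$ linear equations in the weights, together with $\sum w_{\mathfrak c}=1$ and $w_{\mathfrak c}\ge0$. Maximising $p$ over these constraints is a linear program. A basic optimal solution has at most $256$ nonzero weights, which matches the support of $256$ configurations in the statement.

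The exact target is an equality, but the statement only needs error $10^{-30}$. So I would solve the program, then repeatedly refine the weights, or slightly adjust positions, in high-precision arithmetic. Finally I would round the weights to rationals with large denominators and renormalise so that the weights sum exactly to $1$.

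The verification is then finite and rigorous:
\begin{enumerate}
\item check $w_{\mathfrak c}\ge0$, $\sum w_{\mathfrak c}=1$ and the total multiplicity $256$ of each configuration exactly in rational arithmetic;
\item check the gaps exactly, including the gap across the period boundary;
\item compute $p$ exactly as the rational number $\frac1N\sum w_{\mathfrak c}n_1(\mathfrak c)$;
\item enclose $S(j)$ in ball arithmetic with Arb.
\end{enumerate}
Using $|\sum_i d_ie^{2\pi ij\xi_i/N}|^2=\sum_{i,i'}d_id_{i'}\cos(2\pi j(\xi_i-\xi_{i'})/N)$, the last step needs enclosures of cosines at rational multiples of $\pi$. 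At a working precision of a few hundred bits, the radius stays far below $10^{-30}$.

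The main obstacle is reaching error $10^{-30}$ with nonnegative weights. The linear program must be feasible with $p\approx0.6818$, and its solution must survive rounding. The simplex solution lies on a vertex, so rounding can push some active weights negative or move $S(j)$ by far more than $10^{-30}$.

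My first remedy would be to solve the $255$ equations exactly over $\mathbb Q$, using the exact real cosine values. That is impossible because the cosines are irrational. Instead I would fix the support and the basis, solve the square system to several hundred digits, and keep weights bounded away from zero by adding slightly interior configurations to the pool. Then rounding to $10^{-40}$ precision changes $NS(j)$ by at most about $N\cdot N^2\cdot10^{-40}$, which is below the tolerance.

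If strict positivity fails, a fallback is a mixture with a small weight on a strictly feasible auxiliary law, such as a perturbed lattice. This lowers $p$ negligibly in the last digits, and the stated digits of $p$ must then be recomputed from the final data.
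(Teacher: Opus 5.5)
\textbf{Verification step.} Your items (1)--(4) are the paper's proof. Labels, the multiplicity $256$ of each configuration, the gaps, the weights and $p$ are checked in rational arithmetic, and each $256\,S(j)$ is enclosed with Arb. The paper uses points with twelve decimals, weights with denominator $10^{60}$ and $600$ bits, and repeats every check with an independent mpmath program.

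\textbf{The gap: the objective has the wrong sign.} The linear programme must \emph{minimise} the share of simple points, not maximise it. The law enters Theorem~\ref{thm:allwindows} through $(1+p)/2$, so only a small $p$ is useful, and the value in the statement is a least share found by search. Maximisation is degenerate. Let $U$ be an $N\times N$ Haar unitary with eigenangles $\theta_i$, and put $\xi_i=N\theta_i/2\pi$. The CUE has only simple points and satisfies $\mathbb E|\tr U^j|^2=j$ for $0<j<N$, which is exactly $NS(j)=j$. So with the CUE-like samples you put into the pool, the maximum approaches $p=1$. Your procedure would then certify a law with $p$ near $1$, not $0.6818\ldots$.

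Your fallback has the same sign error. Mixing in an auxiliary law made of simple points raises $p$; it does not lower it. Moreover, a slightly perturbed lattice has $NS(j)$ far below $j$, so it is not a law by itself.

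\textbf{Reaching the stated value.} Even with the objective corrected, the digits $0.6818276195\ldots$ belong to one specific data set, and the existence statement is proved by exhibiting that data. Your plan, as you concede at the end, certifies a law with whatever $p$ it happens to produce, which is weaker than the statement. A single programme over a fixed pool of merged lattices and CUE samples has no reason to reach this value. The paper gets there in two stages:
\begin{enumerate}
\item Column generation: HiGHS is alternated with a search for configurations of negative reduced cost $n_1/N-y_0-\sum_jy_j|\sum_id_ie^{2\pi ij\xi_i/N}|^2$. This search is itself a ground-state problem for labelled points, started from merges, splits and moves of double points.
\item A Metropolis stage of label-changing moves. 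It lowers $p$ by about $1.4\cdot10^{-6}$, below the optimum $0.68182868746\ldots$ of the finite programme of \cite{AF}.
\end{enumerate}
Your treatment of rounding is the paper's and is fine: recompute the weights from the $256$ linear equations for the rounded points in high precision, then renormalise to sum $1$.
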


\begin{proof}
The law is in the accompanying files: the points have twelve decimals, and the
weights have the denominator $10^{60}$. That the labels are $1$ or $2$, that
the multiplicities add up to $256$ in each configuration, the gaps, that the
weights are nonnegative with sum $1$, and the value of $p$ are checked in
rational arithmetic. Each $256\,S(j)$ is a polynomial with rational
coefficients in the numbers $\cos(2\pi j\xi_i/256)$ and
$\sin(2\pi j\xi_i/256)$ with rational $\xi_i$. It is enclosed by ball
arithmetic with FLINT/Arb~\cite{Arb} at $600$ bits, and the enclosure lies
within $10^{-30}$ of $j$. A second program, which shares no code with the
first and uses the interval arithmetic of mpmath~\cite{mpmath}, repeats all
checks.
\end{proof}

\begin{corollary}\label{cor:allwindows}
Let $f$ be as in Theorem~\ref{thm:allwindows} with $V(f)\le10\,B(f)$. Then
$Q^*(f)<0.84093$. If $0<\theta\le0.998$ and $(x,b)$ is as in
Theorem~\ref{thm:allwindows}\textup{(b)}, then $(2-C(f)-b)/(1-x)<0.68185$.
\end{corollary}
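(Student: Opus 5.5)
The plan is to combine Theorem~\ref{thm:allwindows} with the law of Proposition~\ref{prop:law}, taking $N=256$, $\tau=10^{-30}$, $p=0.6818276195\ldots$ and gap bound $g=0.998$. Then
\[
 \tau_0=\frac1{6\cdot256^2}+\frac{10^{-30}}{512},
\]
which is about $2.5431\cdot10^{-6}$.

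First I bound $\delta_0$ uniformly over the admissible windows. The map $v\mapsto 2\tau_0v/(B+2\tau_0v)$ is increasing in $v$. So $V(f)\le10\,B(f)$ gives
\[
 \delta_0\le\frac{20\tau_0B}{B+20\tau_0B}=\frac{20\tau_0}{1+20\tau_0}.
\]
This needs $B(f)>0$, which holds because $f$ is a probability density and $|s-t|>0$ almost everywhere. The bound is about $5.0861\cdot10^{-5}$, independent of $f$.

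Part~(a) of Theorem~\ref{thm:allwindows} then gives
\[
 Q^*(f)\le\frac{1+p}2+\frac{1-p}2\,\delta_0,
\]
and the right side is increasing in $\delta_0$. With $\frac{1+p}2=0.84091380\ldots$ and $\frac{1-p}2\delta_0\approx0.3181\cdot5.09\cdot10^{-5}\approx1.62\cdot10^{-5}$, the total is about $0.840930$. This is below $0.84093$ only by a margin of order $10^{-6}$, so I will carry out this arithmetic in exact rationals, as the paper does elsewhere. I will use an upper rational bound for $p$ from the certified value and the exact rational $20\tau_0/(1+20\tau_0)$.

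For the second claim I apply part~(b) with $g=0.998$. Since $0<\theta\le0.998=\min(g,1)$, the hypothesis holds, and it yields
\[
 (2-C(f)-b)/(1-x)\le p+(1-p)\delta_0\approx0.681828+1.62\cdot10^{-5}=0.681844<0.68185.
\]
The hypotheses of Theorem~\ref{thm:allwindows} (twice continuous differentiability of $f$ on $I$, evenness, and being a density) are assumed in the corollary, so nothing further is required.

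The only delicate step is the first inequality. Its margin is tiny, so rounding $p$ or $\tau_0$ carelessly could break it. I will therefore state explicit rational enclosures, namely $p<0.68182762$ and $\delta_0<5.0862\cdot10^{-5}$, and check $\frac{1+p}2+\frac{1-p}2\delta_0<0.84093$ exactly.
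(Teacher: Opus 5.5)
Your proposal is correct and follows the paper's proof. You bound $\delta_0\le20\tau_0/(1+20\tau_0)<5.0861\cdot10^{-5}$ by monotonicity in $V(f)/B(f)$, then insert the law of Proposition~\ref{prop:law} with $p<0.6818277$ into parts~(a) and~(b) of Theorem~\ref{thm:allwindows}.

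There is one arithmetic slip. In fact $\frac{1-p}2\delta_0\approx0.1591\cdot5.086\cdot10^{-5}\approx8.1\cdot10^{-6}$; you multiplied by $1-p$ instead of $\frac{1-p}2$. So bound~(a) is about $0.840922$, below $0.84093$ by roughly $8\cdot10^{-6}$ rather than by the razor-thin margin your estimate suggested, and your exact rational check goes through easily.
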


\begin{proof}
For the law of Proposition~\ref{prop:law},
$\tau_0=\frac1{6\cdot256^2}+\frac{10^{-30}}{512}<2.5431316\cdot10^{-6}$, and
$\delta_0$ increases with $V(f)/B(f)$, so
$\delta_0\le20\tau_0/(1+20\tau_0)<5.0861\cdot10^{-5}$. With
$p<0.6818277$ the two bounds of the theorem are below $0.84093$ and $0.68185$.
\end{proof}

For the four windows of Table~\ref{tab:ceilings}, $V(f)/B(f)$ is at most
$3$, $5.2$, $5.3$ and $9.0$, and for the second window it is at most $5.7$
(ball arithmetic), so the corollary applies to them. We do not know whether
laws with triple points give a smaller constant, nor whether some window
attains $(1+p)/2$.

\emph{How the law was found.} For a finite set of configurations, the laws
with $N\,S(j)=j$ for $0<j<N$ that are supported on this set are the solutions
of $N$ linear equations in nonnegative weights, and $p$ is linear in the
weights. The least $p$ is therefore the value of a linear programme. A further
configuration $\mathfrak c$ improves the programme if its reduced cost
\[
 \frac{n_1(\mathfrak c)}N-y_0-\sum_{j=1}^{N-1}y_j\Bigl|\sum_id_ie^{2\pi ij\xi_i/N}\Bigr|^2
\]
is negative, where $y_0,\dots,y_{N-1}$ are the dual variables of the current
solution. To find such a configuration is again a ground-state problem, now
for labelled points with the pair interaction
$-d_id_{i'}\sum_jy_j\cos(2\pi j(\xi_i-\xi_{i'})/N)$. We alternate between the
linear programme, solved with HiGHS~\cite{HiGHS}, and a local search for
configurations of negative reduced cost; this is column
generation~\cite{GG61}. The search starts from perturbations
of the configurations in use and from neighbours of them in which two simple
points are merged, a double point is split, or a double point is moved. It
runs in floating point and is a heuristic. This is harmless, because
Theorem~\ref{thm:allwindows} holds for every law: at the end the points are
rounded to rationals, the weights are recomputed for the rounded points from
the $N$ linear equations in high precision and rounded to rationals with
sum~$1$, and the result is certified as in the proof of
Proposition~\ref{prop:law}. After the rounding the equations hold up to the
error, which is why the definition allows one. Each period starts from
configurations of the previous one placed one after the other. For the period
$256$ a second stage follows. From each configuration in use it runs a chain
of moves that change the labels: a double point is moved, two simple points
are merged, or a double point is split. The positions are relaxed after every
move, and a move that raises the reduced cost is accepted with the Metropolis
probability. In $700$ rounds this lowers $p$ by about $1.4\cdot10^{-6}$.
Table~\ref{tab:laws} gives the laws obtained in this way; each of them is
certified, and the last one is the law of Proposition~\ref{prop:law}. The
programs, the laws and the logs are in the accompanying files.

\begin{table}
\caption{Certified laws of period $N$ with error $10^{-30}$: the share of
simple points $p$, and the bounds of Theorem~\ref{thm:allwindows} for windows
with $V(f)\le10\,B(f)$. All values are rounded up. The last row is the law
after the second stage of the search.}
\label{tab:laws}
\begin{tabular}{rcccc}\toprule
$N$ & $p$ & $(1+p)/2$ & Bound (a) & Bound (b)\\\midrule
$16$ & $0.6828368$ & $0.8414184$ & $0.84346$ & $0.68692$\\
$32$ & $0.6820902$ & $0.8410451$ & $0.84157$ & $0.68313$\\
$64$ & $0.6818908$ & $0.8409454$ & $0.84108$ & $0.68215$\\
$128$ & $0.6818404$ & $0.8409202$ & $0.84096$ & $0.68191$\\
$256$ & $0.6818290$ & $0.8409145$ & $0.84093$ & $0.68185$\\
$256$ & $0.6818277$ & $0.8409139$ & $0.84093$ & $0.68185$\\
\bottomrule\end{tabular}
\end{table}

\emph{Comparison with~\cite{AF}.} In~\cite[Section~7.2]{AF} and in the Lean
library~\cite{Claude26} a law of period $256$ with labels in $\{1,2\}$ is
used. The library records its error $3\cdot10^{-40}$, the value
$p_0=0.68182868746\ldots$ of its share of simple points, and interval
enclosures of its form factor, computed outside Lean. According to the library
the law is the optimal solution of a linear programme over periodic
configurations and is available from the authors of~\cite{AF}; it is not among
the public files, and we did not use it. That the points of each configuration
are distinct is not stated there. If they are,
Theorem~\ref{thm:allwindows}\textup{(a)} with these values gives
$Q^*(f)<0.84093$ for $V(f)\le10\,B(f)$. Our laws were computed from this
description, without their data. After the first stage of the search the
share of simple points exceeds $p_0$ by less than $3.1\cdot10^{-7}$. After the
second stage it is smaller than $p_0$ by more than $1.06\cdot10^{-6}$, and the
error of that law, which the second check bounds as well, is below
$10^{-50}$. So $p_0$, the optimum of the finite programme of~\cite{AF}, is
not the least share of simple points of a law of period $256$; we do not know
the least share. The bound $0.84093$ of Corollary~\ref{cor:allwindows} does
not change, and it does not depend on the law of~\cite{AF}. Our data and
programs are public.

In summary, $q\le Q^*(f)<0.83998$ for the first window, and $Q^*(f)<0.84093$
for every window with $V\le10B$. Less than $0.002$ separates
Theorem~\ref{thm:main} from this limit.

\section{Verification and nature of this work}\label{sec:lean}

\emph{The formal statement.} The accompanying files contain a
Lean~4~\cite{Lean4} development, built with Mathlib~\cite{mathlib}, that proves
\begin{verbatim}
theorem distinct_zeros_839 :
    ∀ ε > 0, ∃ T₀ : ℝ, ∀ T ≥ T₀,
      (1645064 / 1960733 - ε) * (Ncount 0 T : ℝ) ≤ Ndist 0 T
theorem simple_zeros_678 :
    ∀ ε > 0, ∃ T₀ : ℝ, ∀ T ≥ T₀,
      (1329395 / 1960733 - ε) * (Ncount 0 T : ℝ) ≤ Nsimple 0 T
theorem simple_critical_zeros_673 :
    ∀ ε > 0, ∃ T₀ : ℝ, ∀ T ≥ T₀,
      (1669159 / 2478195 - ε) * (Ncount 0 T : ℝ) ≤ N0simple 0 T
theorem average_simple_and_critical_8367 :
    ∀ ε > 0, ∃ T₀ : ℝ, ∀ T ≥ T₀,
      (2073677 / 2478195 - ε) * (Ncount 0 T : ℝ)
        ≤ ((Nsimple 0 T : ℝ) + (N0mult 0 T : ℝ)) / 2
theorem simple_or_critical_8891 :
    ∀ ε > 0, ∃ T₀ : ℝ, ∀ T ≥ T₀,
      (3376433 / 3797325 - ε) * (Ncount 0 T : ℝ) ≤ NsimpleOrLine 0 T
theorem simple_or_critical_8893 :
    ∀ ε > 0, ∃ T₀ : ℝ, ∀ T ≥ T₀,
      (50652159 / 56956375 - ε) * (Ncount 0 T : ℝ)
        ≤ NsimpleOrLine 0 T
\end{verbatim}
The counting functions count zeros of Mathlib's \texttt{riemannZeta}
with real part in $(0,1)$ and imaginary part in $(0,T]$. \texttt{Ncount}
counts them with multiplicity and \texttt{Ndist} without. \texttt{Nsimple}
counts those of multiplicity one, and \texttt{N0simple} those of multiplicity
one with real part $\frac12$. \texttt{N0mult} counts, with multiplicity,
those with real part $\frac12$, and \texttt{NsimpleOrLine} counts, with
multiplicity, those that have multiplicity one or real part $\frac12$. The
multiplicity is the order of vanishing. The first five are definitions
of~\cite{Claude26}, so the first three statements have the form of the formal
theorems there for $\frac56$, $0.83625$ and $0.67250$. Four files import
only Mathlib and contain the definitions and the six statements, with the
proofs left open by design: \texttt{Challenge839.lean},
\texttt{Challenge673.lean}, \texttt{ChallengeFurther.lean} and
\texttt{Challenge889.lean}. The four files with \texttt{Solution} in place
of \texttt{Challenge} in their names prove the statements. Table~\ref{tab:status} says
how each result of this paper is established, and Table~\ref{tab:map} where
the formal results are in the files.

\emph{Compared with the first paper.} In~\cite{K1} the matrix inequalities,
the count of the zeros and the arithmetic were formalised, and the rest
entered the formal statements as hypotheses: the operator and its reduction to
matrices, the energy asymptotic, the smoothing, the passage from seven points
to all points, and the seven-point inequality itself, which was imported
from~\cite{Trmdy}. Here each of these steps is proved in Lean, so the formal
statements are about the zeros of the zeta function and have no hypotheses.
Since $q$ is larger than the constant of~\cite{K1}, the first statement
contains a formal proof of the bound of that paper as well.

\begin{table}
\caption{How the results are established. ``Lean'' means a formal proof from
the axioms of Mathlib, with the further assumption given in the last column.}
\label{tab:status}
\begin{tabular}{>{\raggedright\arraybackslash}p{0.40\textwidth}>{\raggedright\arraybackslash}p{0.22\textwidth}>{\raggedright\arraybackslash}p{0.27\textwidth}}\toprule
Result & Established by & Further assumption\\\midrule
Theorem~\ref{thm:main}, Corollary~\ref{cor:simple},
Proposition~\ref{prop:local} & Lean & the axiom that the first search
returns \texttt{true}\\\addlinespace
Theorem~\ref{thm:line}, Proposition~\ref{prop:local2} & Lean & the axiom
that the second search returns \texttt{true}\\\addlinespace
Theorem~\ref{thm:further} & Lean & the second of these axioms for the first
bound, the first for the second bound\\\addlinespace
Theorem~\ref{thm:third}, Proposition~\ref{prop:local3} & Lean & the axiom
that the third search returns \texttt{true}\\\addlinespace
Propositions~\ref{prop:six} and~\ref{prop:line-ceiling} & Lean & none\\\addlinespace
Sections~\ref{sec:matrix}, \ref{sec:proof} and~\ref{sec:line} with general
parameters & written proofs & none; Lean has the instances that are
used\\\addlinespace
Proposition~\ref{prop:ceiling}, Table~\ref{tab:cell} & ball arithmetic,
outside Lean & the library FLINT/Arb\\\addlinespace
Theorem~\ref{thm:allwindows}, Proposition~\ref{prop:law},
Corollary~\ref{cor:allwindows} & written proof and ball arithmetic, outside
Lean & the library FLINT/Arb; a second program with mpmath repeats the
check\\\bottomrule
\end{tabular}
\end{table}

\begin{table}
\caption{Where the formal results are in the accompanying Lean files. The
statement files are named \texttt{Challenge} in place of \texttt{Solution}.
The first, second and third search are the evaluations by
\texttt{native\_decide} in the files \texttt{Local/RunAll.lean} of the
directories \texttt{Distinct839}, \texttt{Simple673} and \texttt{Line889}.}
\label{tab:map}
\begin{tabular}{@{}>{\raggedright\arraybackslash}p{0.22\textwidth}>{\raggedright\arraybackslash}p{0.46\textwidth}>{\raggedright\arraybackslash}p{0.10\textwidth}>{\raggedright\arraybackslash}p{0.10\textwidth}@{}}\toprule
Result & Lean statement and file & Window & Search\\\midrule
Theorem~\ref{thm:main} & \texttt{distinct\_zeros\_839}, \texttt{Solution839.lean} & first & first\\\addlinespace
Corollary~\ref{cor:simple} & \texttt{simple\_zeros\_678}, \texttt{Solution839.lean} & first & first\\\addlinespace
Theorem~\ref{thm:line} & \texttt{simple\_critical\_zeros\_673}, \texttt{Solution673.lean} & second & second\\\addlinespace
Theorem~\ref{thm:further}, first bound & \texttt{average\_simple\_and\_}\allowbreak\texttt{critical\_8367}, \texttt{SolutionFurther.lean} & second & second\\\addlinespace
Theorem~\ref{thm:further}, second bound & \texttt{simple\_or\_critical\_8891}, \texttt{SolutionFurther.lean} & first & first\\\addlinespace
Theorem~\ref{thm:third} & \texttt{simple\_or\_critical\_8893}, \texttt{Solution889.lean} & third & third\\\addlinespace
Proposition~\ref{prop:six} & \texttt{ceiling\_range6}, \texttt{ceiling\_range8}, \texttt{ceiling\_range10}, \texttt{ceiling\_all}, \texttt{SolutionCeilings.lean} & first & none\\\addlinespace
Proposition~\ref{prop:line-ceiling} & \texttt{ceiling\_line\_range7}, \texttt{ceiling\_line\_all}, \texttt{SolutionCeilings.lean} & second & none\\\bottomrule
\end{tabular}
\end{table}

\emph{What is reused.} The development extends the library
of~\cite{Claude26,AF}, which derives from Mathlib the explicit formula, the
Riemann--von Mangoldt formula~\cite{Tit86} and the Montgomery--Vaughan
inequality~\cite{MV74}. It has
the machinery for the trace and the second moment of a discretised form of
the zero operator for a class of windows, and the resulting statements for
the zeta function for two windows. That operator is a finite matrix indexed by a
grid of heights; zeros enter through the values of the transform of the window
at the grid, and the bandwidth is $\lambda\log T$ with $\lambda<1$. Two
constants in that library were relaxed: the lower bounds $\frac12$ for two
normalised moments of the window became $\frac18$ and $\frac14$. The reason
is that our window is not monotone on $[0,\frac12]$ and has to be scaled down to satisfy
the variation bounds of the class. The patch changes $14$ files and no
headline statement of the library. Mathlib itself is used unchanged, at the
revision fixed by the library.

\emph{What is new.} In order of the proof:
\begin{itemize}
\item the window $f$ in the class of the library, its moments, and the energy
constant $\frac A\lambda+\lambda B$ with $A=\int f^2$,
$B=2\int_0^1uR(u)\,du$ and $A+B<C_0$, so that a bandwidth $\lambda<1$ close
to $1$ gives the energy bound with $C_0$;
\item the second moment of the zero operator of the zeta function for a
general window of the class, which the library had for the flat and the
Montgomery--Taylor window;
\item the Gram matrix of zeros on the critical line: for zeros away from the
ends of the range its entries are within $o(1)$ of $K(y_i-y_j)$, and on
separated sets it is bounded by $\frac c2$ (Lemma~\ref{lem:maj}, with the
majorant, its Fourier transform and $f\le g$ proved in Lean);
\item Theorem~\ref{thm:gram}, with $\eta_0=\frac1{100}$ and $C_1=152$, and
Lemma~\ref{lem:threshold} in eigenvalue form;
\item Lemma~\ref{lem:transfer} with the checks of the frozen tables;
\item Proposition~\ref{prop:local}, see below;
\item the counting argument of Section~\ref{sec:proof}.
\end{itemize}
The formal proof follows the written one, with these differences, none of
which changes the constant: it works with the discretised operator
of~\cite{Claude26} at bandwidth $\lambda<1$ on the ranges $(T,2T]$ and then
sums over dyadic ranges; zeros within $\sqrt T$ of the ends of a range are
moved into the indefinite part, together with the zeros of multiplicity at
least three and the pairs off the line; the pair blocks are estimated with an
explicit loss per point; the Gram bound is stated for a positive semidefinite
matrix with the given entries and for distinct points, which is the case in
the application; $f\le g$ is checked on $42$ dyadic intervals; and
$\beta$ is obtained from thirteen cells of the kernel table and $|K'|\le\pi$.

\emph{The critical line.} Theorem~\ref{thm:line} has its own development,
which repeats this chain for the second window at the level $c=2$ and reuses
every part of the first that does not depend on the window or on the labels.
New in it are Proposition~\ref{prop:gram2}, with $\eta_0=\frac1{100}$ and
$C_1=44$; the inequality~\eqref{eq:transfer8} for frames of eight points,
proved in the form with pairs at index distance at most seven; and the count of
Section~\ref{sec:line}. Three points differ from the written proof and do not
change the constant. Simple zeros within $\sqrt T$ of the ends of a range go
into the remainder, where each costs $4$ and carries multiplicity $1$; there
are $o(N)$ of them. The Gram entries are known up to a tolerance, so the
diagonal entries need not equal $1$, and a pair block is estimated by
comparison and not by its eigenvalues $1\pm u$. And the formal proof uses
$\beta=\frac1{20}$, which is all that $x\le\beta$ needs. The bound for
separated points is used with the constant $\frac{19}{10}<2$, and $f\le g$ is
checked on $14$ dyadic intervals.

\emph{The third window.} Theorem~\ref{thm:third} has a development of the
same form, which repeats the chain of the critical line for the third window
and ends with the count of part~(c) of Theorem~\ref{thm:further-general}.
The three points above apply again, with the cost $t^2$ in place of $4$ for a
simple zero near the ends of a range. The formal proof uses
$\beta=\frac3{50}$ and the constant $\frac{17}{10}$, and $f\le g$ is checked
on $28$ dyadic intervals.

\emph{The computation.} The kernel table of Section~\ref{sec:cert} is defined
in Lean in exact rational arithmetic, and its soundness, the statement that
$K(t)^2\ge v_k/2^{28}$ on each cell, is a theorem. The branch-and-bound
search is a Lean function on integers, and its soundness is a theorem: if it
returns \texttt{true} on a sound table, Proposition~\ref{prop:local} holds.
What Lean's kernel does not check is one evaluation, the statement that the
search returns \texttt{true} on the table. It is carried out by the tactic
\texttt{native\_decide}: the table is evaluated by Lean's interpreter and the
search by compiled code, and the outcome is recorded as an axiom. This trusts
the Lean compiler, interpreter and runtime for this one computation. The
kernel can evaluate the search on single small boxes, as in the third check
below, but the whole search is out of reach for it in practice. Certificates
of other forms have been checked by the kernel itself: one with $5{,}651$
cases in~\cite{Lavery}, and one through about $1{,}500$ generated files
in~\cite{DSouza}. Three
checks support the evaluation. The table computed in Lean, written to a file, agrees with
the table of the original certificate in all $209{,}010$ entries. A version of
the search with counters, built from the same function for a single box,
visits $60{,}467{,}309$ boxes on it and accepts $30{,}233{,}655$, the numbers
of the reference programs in C++ and Python in the accompanying files, which
run the same search. And on four small boxes of the domain, among them one
around the point of smallest slack, the first test of the search is evaluated
by the kernel itself, which proves the inequality there with the standard
axioms.

Proposition~\ref{prop:local2} is proved by a second run of the same kind,
for boxes of seven gaps, with its own table and its own axiom. This search is
about nine times as long as the first, and it is evaluated in parts: the boxes
that are left after $32$ bisections are dealt out to $96$ lists, which the
compiled code examines as separate tasks, and a theorem states that the
inequality holds if the evaluation in parts returns \texttt{true}. The
statement that is proved does not depend on the division. The table computed
in Lean agrees with the reference table in all $262{,}144$ entries, and the
version with counters visits $535{,}332{,}163$ boxes and accepts $267{,}666{,}082$,
the numbers of the C++ program. Four small boxes are again checked by the kernel
itself, among them one around the point where a numerical search found the
smallest slack of the inequality, about $1.3\cdot10^{-5}$.

Proposition~\ref{prop:local3} is proved by a third run, with its own table
and its own axiom. The table computed in Lean agrees with the reference table
in all $262{,}144$ entries, and the version with counters visits
$572{,}959{,}453$ boxes and accepts $286{,}479{,}727$, again the numbers of
the C++ program. Four small boxes are checked by the kernel itself here too,
two of them around the two points where a numerical search found the smallest
slack, about $1.0\cdot10^{-5}$ and $2.0\cdot10^{-5}$.

\emph{Axioms and size.} For each of the six theorems the command
\texttt{\#print axioms} lists the three standard axioms \texttt{propext},
\texttt{Classical.choice} and \texttt{Quot.sound}, and one axiom of a
computation. That axiom records that the evaluation of
\texttt{checkAll kTabArray}, or of \texttt{checkAllPar kTabArray 32 96} for the
second run, returned \texttt{true}, for the table in question:
Theorem~\ref{thm:main}, Corollary~\ref{cor:simple} and the second bound of
Theorem~\ref{thm:further} depend on the first run, Theorem~\ref{thm:line}
and the first bound of Theorem~\ref{thm:further} on the second, and
Theorem~\ref{thm:third} on the third. Everything else is checked by the
kernel; in particular the implications from
Propositions~\ref{prop:local}, \ref{prop:local2} and~\ref{prop:local3} to the theorems and the
soundness of the search depend on the three standard axioms only. The proof
developments contain no \texttt{sorry}; it occurs only in the five statement
files \texttt{Challenge*.lean}, where the proofs are left open by design. The
development for
Theorem~\ref{thm:main} has about $13{,}300$ lines in $72$ files and the one
for Theorem~\ref{thm:line} about $10{,}000$ lines in $57$ files; the library
they build on has about $102{,}000$. Theorem~\ref{thm:further} adds about
$1{,}900$ lines in $12$ files: Lemma~\ref{lem:threshold2}, the two counts of
Section~\ref{sec:further}, and the instances. The development for
Theorem~\ref{thm:third} has about $9{,}100$ lines in $54$ files. The statements of Section~\ref{sec:matrix} are formalised for the
constants of Section~\ref{sec:proof}, not for general parameters.

\emph{The ceilings.} Propositions~\ref{prop:six}
and~\ref{prop:line-ceiling} are formalised in a separate development, of about
$3{,}400$ lines in $23$ files. Its six theorems depend on the three standard
axioms only; no compiled computation enters. The file
\texttt{ChallengeCeilings.lean} states them from Mathlib alone, with the two
kernels written out and the two energy constants defined by the integrals
in~\eqref{eq:energy-intro}, and
\texttt{SolutionCeilings.lean} proves these statements. The hypothesis of
Proposition~\ref{prop:six} is the definition \texttt{PairCert}, with the
arguments $s$, $x_1$, $x_2$ and $b$:
there is a constant $C_3$ such that the displayed inequality holds for all
$\theta$-separated labelled configurations and every $R\ge0$ that bounds
their spread, with $b\,R$ in place of $b\,\spn(Y)$. The conclusion is stated
with $C(f)$ as defined there. Two identities connect it with the proofs of
the theorems, and both are formalised: $C(f)=A+B$ with $A=\int f^2$ and
$B=2\int_0^1uR(u)\,du$, which is the change of variables $u=s-t$ and is proved
for every continuous function, and $A+B$ is the value at $\lambda=1$ of the
energy constant used in the proofs of Theorems~\ref{thm:main}
and~\ref{thm:line}. The trial states have rational positions, the values
of $K^2$ are bounded above through Taylor polynomials in rational arithmetic,
and the sums over the periods are treated as in the proof of
Proposition~\ref{prop:six}. Two further statements connect the definition
with the rest of the paper. An inequality of the form of
Theorem~\ref{thm:gram}, with any constants in place of ours and with exact
kernel values, implies the hypothesis for all pairs; this depends on the
standard axioms only. And the certificates of Theorems~\ref{thm:main}
and~\ref{thm:line} satisfy the hypothesis with $s=6$ and with $s=7$; these
are two statements, one for each theorem, and each depends on the axiom of
the respective search. The ceilings are formalised for the separation
$\frac{17}{20}$; the statement of Remark~\ref{rem:c} about other separations
is a written argument.

Not formalised are the quantity $Q^*(f)$ of Section~\ref{sec:ground} and the
passage from periodic states to it, the bounds of
Proposition~\ref{prop:ceiling} for the two earlier windows,
Table~\ref{tab:cell}, and Section~\ref{sec:allwindows}, of which only
Lemma~\ref{lem:stability} is in the Lean library. Their numerical parts are
certified by ball arithmetic with FLINT/Arb~\cite{Arb}, outside Lean. For the
law of Proposition~\ref{prop:law} the data are exact rational numbers, and a
second program with a different interval library repeats the check. The
programs that searched for the law are in Python and, for the local
optimisation and for the chains of the second stage, in C++; they are in the
accompanying files with their tests,
and nothing in the proof depends on their correctness. No statement of this
paper rests on unvalidated numerical computation: outside Lean, the
computations that enter a proof use exact rational arithmetic or interval
arithmetic with rigorous error bounds. Floating point without error control
was used only to find the certificates, the trial states and the law.

\emph{Nature of this work.} This paper builds on the first paper in this
series~\cite{K1}. Like that paper, it is primarily an experiment in AI-assisted
mathematical research. The arguments, the
computations, the Lean formalisation, the figures and much of the text were
developed with substantial assistance from AI systems (OpenAI Codex and
Anthropic Claude) under the author's direction. The certificate of
Proposition~\ref{prop:local}, the separation argument and the telescoping
corrections were developed with OpenAI Codex. The formalisation, the analysis
of Section~\ref{sec:limits}, the window and the certificate of
Theorem~\ref{thm:line}, the figures and much of the text were developed with
Anthropic Claude, and drafts of the paper were reviewed by the same systems in
the role of referees. The author defined and steered the research programme
and takes full responsibility for the content.

\appendix
\section{Certificate data}\label{app:data}

Tables~\ref{tab:weights}, \ref{tab:rewards} and~\ref{tab:knots} give the
local data of Proposition~\ref{prop:local}, Tables~\ref{tab:line}
and~\ref{tab:lineknots} those of Proposition~\ref{prop:local2}, and
Tables~\ref{tab:third} and~\ref{tab:thirdknots} those of
Proposition~\ref{prop:local3}. All integers
in the tables are to be divided by $10^6$. The weights and the corrections are
symmetric in the labels. For each pair of labels the tables
give two continuous piecewise linear functions $u_0$ and $u_1$ through their
values at the knots $\frac78+\frac j8$, $0\le j\le11$; they are linear between
knots and vanish at $\frac34$, at $\frac{19}8$ and outside
$[\frac34,\frac{19}8]$. Then
$(B_0,\dots,B_5)=(u_0,\,u_1,\,-u_0-u_1,\,-u_0-u_1,\,u_1,\,u_0)$, which satisfies
\eqref{eq:zero} identically. For the second window, whose frames have eight
points, there are three functions and
$(B_0,\dots,B_6)=(u_0,\,u_1,\,u_2,\,-2(u_0+u_1+u_2),\,u_2,\,u_1,\,u_0)$.

\begin{table}
\caption{First window: pair weights, in units of $10^{-6}$. For each index distance the three
budgets are $2$, $4$ and $8$.}
\label{tab:weights}
\begin{tabular}{crrr}\toprule
$(i,j)$ & $W^{11}_{ij}$ & $W^{12}_{ij}$ & $W^{22}_{ij}$\\\midrule
$(0,1)$ & $290164$ & $613773$ & $937383$\\
$(0,2)$ & $632845$ & $943091$ & $1344283$\\
$(0,3)$ & $903375$ & $1903375$ & $3816137$\\
$(0,4)$ & $0$ & $853682$ & $1707365$\\
$(0,5)$ & $1000000$ & $2000000$ & $4000000$\\
$(0,6)$ & $2000000$ & $4000000$ & $8000000$\\
$(1,2)$ & $415442$ & $686130$ & $1865613$\\
$(1,3)$ & $238229$ & $639391$ & $1597288$\\
$(1,4)$ & $96624$ & $96624$ & $183862$\\
$(1,5)$ & $2000000$ & $2292634$ & $4585268$\\
$(1,6)$ & $1000000$ & $2000000$ & $4000000$\\
$(2,3)$ & $294392$ & $700096$ & $1197003$\\
$(2,4)$ & $257850$ & $835035$ & $2116857$\\
$(2,5)$ & $96624$ & $96624$ & $183862$\\
$(2,6)$ & $0$ & $853682$ & $1707365$\\
$(3,4)$ & $294392$ & $700096$ & $1197003$\\
$(3,5)$ & $238229$ & $639391$ & $1597288$\\
$(3,6)$ & $903375$ & $1903375$ & $3816137$\\
$(4,5)$ & $415442$ & $686130$ & $1865613$\\
$(4,6)$ & $632845$ & $943091$ & $1344283$\\
$(5,6)$ & $290164$ & $613773$ & $937383$\\
\bottomrule
\end{tabular}
\end{table}

\begin{table}
\caption{First window: site rewards, in units of $10^{-6}$, with sums $x_1=0.054350$ and
$x_2=0.069433$.}
\label{tab:rewards}
\begin{tabular}{crrrrrrr}\toprule
$i$ & $0$ & $1$ & $2$ & $3$ & $4$ & $5$ & $6$\\\midrule
$\kappa_{i,1}$ & $0$ & $27175$ & $0$ & $0$ & $0$ & $27175$ & $0$\\
$\kappa_{i,2}$ & $2063$ & $30080$ & $2054$ & $1039$ & $2054$ & $30080$ & $2063$\\
\bottomrule
\end{tabular}
\end{table}

\begin{table}
\caption{First window: values of $u_0$ and $u_1$ at the knots $\frac78+\frac j8$, in units of
$10^{-6}$. At the knots that are not listed all values are zero.}
\label{tab:knots}
\begin{tabular}{ccrrrrr}\toprule
Labels & & $j=1$ & $j=2$ & $j=7$ & $j=8$ & $j=9$\\\midrule
$11$ & $u_0$ & $186$ & $976$ & $-1914$ & $-1005$ & $0$\\
$11$ & $u_1$ & $-736$ & $-177$ & $124$ & $0$ & $0$\\
$12$ & $u_0$ & $334$ & $1483$ & $0$ & $-920$ & $-264$\\
$12$ & $u_1$ & $0$ & $-495$ & $0$ & $524$ & $111$\\
$22$ & $u_0$ & $2026$ & $2124$ & $0$ & $-432$ & $0$\\
$22$ & $u_1$ & $-924$ & $-11$ & $0$ & $0$ & $773$\\
\bottomrule
\end{tabular}
\end{table}

\begin{table}
\caption{Second window, frames of eight points: pair weights, in units of $10^{-6}$. For each
index distance their sum is at most $2$.}
\label{tab:line}
\begin{tabular}{crcrcrcr}\toprule
$(i,j)$ & $W_{ij}$ & $(i,j)$ & $W_{ij}$ & $(i,j)$ & $W_{ij}$ & $(i,j)$ & $W_{ij}$\\\midrule
$(0,1)$ & $159939$ & $(1,2)$ & $264496$ & $(2,4)$ & $614430$ & $(3,7)$ & $143766$\\
$(0,2)$ & $0$ & $(1,3)$ & $385569$ & $(2,5)$ & $757940$ & $(4,5)$ & $389188$\\
$(0,3)$ & $280438$ & $(1,4)$ & $340591$ & $(2,6)$ & $856233$ & $(4,6)$ & $385569$\\
$(0,4)$ & $143766$ & $(1,5)$ & $856233$ & $(2,7)$ & $709387$ & $(4,7)$ & $280438$\\
$(0,5)$ & $709387$ & $(1,6)$ & $581225$ & $(3,4)$ & $372750$ & $(5,6)$ & $264496$\\
$(0,6)$ & $1000000$ & $(1,7)$ & $1000000$ & $(3,5)$ & $614430$ & $(5,7)$ & $0$\\
$(0,7)$ & $2000000$ & $(2,3)$ & $389188$ & $(3,6)$ & $340591$ & $(6,7)$ & $159939$\\
\bottomrule
\end{tabular}
\end{table}

\begin{table}
\caption{Second window: values of $u_0$, $u_1$ and $u_2$ at the knots $\frac78+\frac j8$, in units
of $10^{-6}$. The only nonzero reward is $\kappa_3=8722$, and
$\alpha=627$.}
\label{tab:lineknots}
\begin{tabular}{crrrcrrr}\toprule
$j$ & $u_0$ & $u_1$ & $u_2$ & $j$ & $u_0$ & $u_1$ & $u_2$\\\midrule
$0$ & $647$ & $0$ & $-425$ & $6$ & $0$ & $0$ & $221$\\
$1$ & $555$ & $91$ & $-425$ & $7$ & $0$ & $0$ & $221$\\
$2$ & $518$ & $45$ & $-342$ & $8$ & $270$ & $86$ & $-275$\\
$3$ & $647$ & $0$ & $-425$ & $9$ & $325$ & $58$ & $-244$\\
$4$ & $647$ & $0$ & $-425$ & $10$ & $0$ & $0$ & $-425$\\
$5$ & $647$ & $0$ & $-425$ & $11$ & $647$ & $0$ & $-425$\\
\bottomrule
\end{tabular}
\end{table}

\begin{table}
\caption{Third window: pair weights, in units of $10^{-6}$. For each index distance their sum is
at most $2$.}
\label{tab:third}
\begin{tabular}{crcrcr}\toprule
$(i,j)$ & $W_{ij}$ & $(i,j)$ & $W_{ij}$ & $(i,j)$ & $W_{ij}$\\\midrule
$(0,1)$ & $263857$ & $(1,3)$ & $412303$ & $(2,6)$ & $608100$\\
$(0,2)$ & $364846$ & $(1,4)$ & $531836$ & $(3,4)$ & $394599$\\
$(0,3)$ & $468163$ & $(1,5)$ & $783798$ & $(3,5)$ & $412303$\\
$(0,4)$ & $608100$ & $(1,6)$ & $1000000$ & $(3,6)$ & $468163$\\
$(0,5)$ & $1000000$ & $(2,3)$ & $394599$ & $(4,5)$ & $341543$\\
$(0,6)$ & $2000000$ & $(2,4)$ & $445700$ & $(4,6)$ & $364846$\\
$(1,2)$ & $341543$ & $(2,5)$ & $531836$ & $(5,6)$ & $263857$\\
\bottomrule
\end{tabular}
\end{table}

\begin{table}
\caption{Third window: values of $u_0$ and $u_1$ at the knots $\frac78+\frac j8$, in units of
$10^{-6}$. The only nonzero reward is $\kappa_3=54525$, and
$\alpha=6212$.}
\label{tab:thirdknots}
\begin{tabular}{crrcrr}\toprule
$j$ & $u_0$ & $u_1$ & $j$ & $u_0$ & $u_1$\\\midrule
$0$ & $1642$ & $-224$ & $6$ & $565$ & $15503$\\
$1$ & $6659$ & $4991$ & $7$ & $4132$ & $5442$\\
$2$ & $7184$ & $4781$ & $8$ & $4638$ & $5326$\\
$3$ & $14319$ & $1750$ & $9$ & $5687$ & $5013$\\
$4$ & $19098$ & $-3030$ & $10$ & $0$ & $16033$\\
$5$ & $0$ & $-2543$ & $11$ & $0$ & $16068$\\
\bottomrule
\end{tabular}
\end{table}

\end{document}